\newlength{\R}\setlength{\R}{2.7cm}
\patchcmd{\@maketitle}{\LARGE \@title}{\LARGE\bfseries\@title}{}{}
\renewcommand{\@seccntformat}[1]{\csname the#1\endcsname.\quad}
\definecolor{darkblue}{rgb}{0,0,.5}
\def\th@plain{%
	\thm@notefont{}
	\itshape 
}
\def\th@definition{%
	\thm@notefont{}
	\normalfont 
}
\renewenvironment{proof}[1][\proofname]{\par
	\normalfont
	\topsep0\p@\@plus3\p@ \trivlist
	\item[\hskip\labelsep\itshape
	#1\@addpunct{.}]\ignorespaces
}{%
	\qed\endtrivlist
}
\newtheorem{theorem}{Theorem}[section]
\newtheorem{lemma}[theorem]{Lemma}
\newtheorem{proposition}[theorem]{Proposition}
\newtheorem{assumption}[theorem]{Assumption}
\theoremstyle{definition}
\newtheorem{definition}[theorem]{Definition}
\theoremstyle{definition}
\newtheorem{example}[theorem]{Example}
\theoremstyle{definition}
\newtheorem{remark}[theorem]{Remark}
\theoremstyle{definition}
\newtheorem{algorithm}{Algorithm}
\newcommand{\dom}{\ensuremath{\operatorname{dom}}}
\newcommand{\Fix}{\ensuremath{\operatorname{Fix}}}
\newcommand{\zer}{\ensuremath{\operatorname{zer}}}
\newcommand{\gra}{\ensuremath{\operatorname{gra}}}
\newcommand{\Id}{\ensuremath{\operatorname{Id}}}
\newcommand{\lspan}{\ensuremath{\operatorname{span}}}
\newcommand{\ran}{\ensuremath{\operatorname{ran}}}
\newcommand{\diag}{\ensuremath{\operatorname{diag}}}
\newcommand{\Deg}{\ensuremath{\operatorname{Deg}}}
\newcommand{\Inc}{\ensuremath{\operatorname{Inc}}}
\newcommand{\Lap}{\ensuremath{\operatorname{Lap}}}
\newcommand{\Adj}{\ensuremath{\operatorname{Adj}}}
\def\bOne{{\mathbbm{1}}}
\begin{document}

\title{A general approach to distributed operator splitting}

\author{
Minh N. Dao\thanks{School of Science, RMIT University, Melbourne, VIC 3000, Australia.
E-mail:~\href{href:minh.dao@rmit.edu.au}{minh.dao@rmit.edu.au}.},
~
Matthew K. Tam\thanks{School of Mathematics and Statistics, The University of Melbourne, Parkville, VIC 3010, Australia.
E-mail:~\href{href:matthew.tam@unimelb.edu.au}{matthew.tam@unimelb.edu.au}.},
~and~
Thang D. Truong\thanks{School of Science, RMIT University, Melbourne, VIC 3000, Australia.
E-mail:~\href{href:thang.tdk64@gmail.com}{thang.tdk64@gmail.com}.}
}

\date{}

\maketitle

\begin{abstract}
Splitting methods have emerged as powerful tools to address complex problems by decomposing them into smaller solvable components. In this work, we develop a general approach to forward-backward splitting methods for solving monotone inclusion problems involving both set-valued and single-valued operators, where the latter may lack cocoercivity. Our proposed approach, based on some coefficient matrices, not only encompasses several important existing algorithms but also extends to new ones, offering greater flexibility for different applications. Moreover, by appropriately selecting the coefficient matrices, the resulting algorithms can be implemented in a distributed and decentralized manner.
\end{abstract}

\noindent{\bfseries Keywords:}
Distributed optimization,
Douglas--Rachford,
forward-backward,
graph-based algorithms,
splitting algorithms.

\noindent{\bf Mathematics Subject Classification (MSC 2020):}
47H05,  
47H10,  
65K10,  
90C30.  

\section{Introduction}

We consider the inclusion problem
\begin{align}
\label{gen_prob}
    \text{find } x \in \mathcal{H} \text{ such that } 0 \in \sum_{i=1}^n A_i x + \sum_{j=1}^p B_j x,
\end{align}
where $\mathcal{H}$ is a real Hilbert space, $A_1, \dots, A_n\colon \mathcal{H} \rightrightarrows \mathcal{H}$ are maximally monotone operators, and $B_1, \dots, B_p \colon \mathcal{H} \rightarrow \mathcal{H}$ are either cocoercive operators, or monotone and Lipschitz continuous operators. Problem \eqref{gen_prob} encompasses many important optimization models including composite optimization problems \cite{CP11}, structured saddle-point problems \cite{Roc70}, and variational inequalities \cite[Chapter 12]{RW98}. In order to solve problem \eqref{gen_prob}, \emph{splitting algorithms} use the idea of performing the computation of each operator individually instead of their direct sum. Set-valued operators $A_1, \dots, A_n$ are processed through resolvents (\emph{backward steps}), while single-valued operators $B_1, \dots, B_p$ are handled by direct computation (\emph{forward steps}). 

In the case where $p=0$, problem \eqref{gen_prob} reduces to finding a zero in the sum of finitely many maximally monotone operators that are potentially set-valued. In turn, when $n=2$, the best-known algorithm is the \emph{Douglas--Rachford algorithm} \cite{DR56,Sva11}. When $n=3$, Ryu \cite{Ryu20} introduced a splitting algorithm along with a notion of \emph{frugal resolvent splittings}. This notion refers to a class of fixed point algorithms that can be described using only vector addition, scalar multiplication, and the resolvent of each operator once per iteration. This idea was later extended in \cite{MT23} to solve inclusion problems with arbitrary $n\geq 2$, even though it does not recover Ryu algorithm when $n=3$. To address this issue, a framework for convergence of frugal resolvent splittings was developed in \cite{Tam23} which now covers both algorithms in \cite{MT23} and \cite{Ryu20}. Using a different approach, Bredies, Chenchene, and Naldi \cite{BCN24} independently proposed graph and distributed extensions of the Douglas--Rachford algorithm, which gave the same extension of Ryu algorithm to the case $n>3$ as presented in \cite{Tam23}.

Although there exist operators whose resolvents are computable, many of them do not have closed form. Therefore, forward evaluations, which can only be used by single-valued operators, are favorable as they use the operator directly. This is the case when $n=1$ and $p=1$ in \eqref{gen_prob} and $B_1$ is cocoercive. In this situation, the \emph{forward-backward algorithm} introduced by Lions and Mercier \cite{LM79} and Passty \cite{Passty79} has been widely used, while the closely related \emph{backward-forward algorithm} by Attouch, Peypouquet, and Redont \cite{APR18} can also be employed. However, one drawback of the forward-backward and backward-forward algorithms is that they require $B_1$ to be a cocoercive operator, which can be difficult to satisfy in practice. To relax the cocoercivity requirement, Tseng \cite{Tseng00} proposed an algorithm commonly called the \emph{forward-backward-forward algorithm} for the case when $B_1$ is monotone and Lipschitz continuous, at the cost of an additional forward step at each iteration. Using the same assumption as Tseng, the \emph{forward-reflected-backward algorithm} developed in \cite{MT20} 
includes a ``reflection term'' that evaluates $B_1$ at not only the current point but also the previous point. This algorithm only uses one forward evaluation per iteration instead of two as in Tseng's algorithm. In the case where $n=2$ and $p = 1$ in \eqref{gen_prob}, one can use the \emph{Davis--Yin algorithm} \cite{DY17} if $B_1$ is a cocoercive operator. When $B_1$ is monotone and Lipschitz continuous, the \emph{backward-forward-reflected-backward algorithm} or the \emph{backward-reflected-forward-backward algorithm} \cite{RT20} can be applied. Furthermore, the authors of \cite{AMTT23} proposed a \emph{distributed forward-backward algorithm} for the case where $n\geq 2$, $p =n-1$, and $B_1,\dots,B_p$ are cocoercive, as well as a second algorithm for $n\geq 3$, $p =n-2$, and $B_1,\dots,B_p$ monotone and Lipschitz continuous. 

Despite the fact that the convergence proofs of the aforementioned algorithms have similarities, their convergence analyzes are often conducted separately, leading to a case-by-case approach. To provide a more unified perspective, we introduce a general approach to distributed operator splitting and convergence analysis that encompasses a wide range of existing methods. This generalizes the distributed forward-backward algorithms in \cite{AMTT23} and, at the same time, extends the framework in \cite{Tam23} not only by incorporating single-valued operators $B_1,\dots,B_p$, but also by allowing more flexibility in parameter and coefficient matrix selection. This also covers the algorithms devised by graphs in \cite{ACL24, BCN24}, offering a more accessible and implementable alternative. Our proofs are concise and transparent, relying on the Krasnosel'ski\u{\i}--Mann iteration and the weak-strong sequential closedness with the fixed point operator being conically quasiaveraged. The conical quasiaveragedness property of the underlying operator is reduced to verifying the positive semidefiniteness of a certain matrix defined by algorithm coefficients. Furthermore, there are many existing algorithms satisfying this assumption automatically.

In this work, we also focus on \emph{distributed algorithms}, which decompose a global task into subtasks across a network of nodes with each operator assigned to a single node, and communication restricted to directly connected nodes. Of special interest are \emph{decentralized algorithms}, a subclass of distributed algorithms that operates without a central coordinator: each node maintains and updates its local variables using only local computations and communications with immediate neighbors \cite[Section 11]{RY22}. Our approach utilizes coefficient matrices that, in certain cases, can be derived from \emph{weighted graphs} representing information flow between nodes in a network. Specialized instances of our framework are then presented within this graph-based setting. First, we generalize the \emph{forward-backward algorithms devised by graphs} \cite{ACL24} to settings that may not require cocoercivity. Second, we encompass several existing schemes such as the forward-backward and forward-reflected-backward algorithms for ring networks \cite{AMTT23}, the sequential and parallel forward-Douglas--Rachford algorithms \cite{BCLN22}, the \emph{generalized forward-backward algorithm} \cite{RFP13}, and different product-space formulations of the Davis--Yin algorithm, including a reduced dimensional variant. Next, we derive an explicit algorithm based on complete graphs with or without cocoercivity assumptions, thereby extending the work in \cite{Tam23} that only considered the unit-weight case without establishing explicit formulas. Last but not least, we present a new splitting algorithm based on complete and star graphs that can be seen as a generalization of the Ryu splitting algorithm for $n$ set-valued and $p$ single-valued operators, where the latter are not necessarily cocoercive. We acknowledge the concurrent work \cite{ACGN25}, which adopts a similar approach and came to our attention during the final stages of this manuscript.

Our main contributions are as follows.
\begin{enumerate}
    \item We develop a general framework that integrates a variety of algorithms with different proofs, providing a concise and transparent convergence analysis through a new perspective. This not only covers many existing algorithms in the literature but also offers greater flexibility and broader applicability to a wider class of problems. 
    \item Under mild assumptions that are automatically satisfied in graph-based settings, we derive a class of distributed algorithms tailored to ring, sequential, star, and complete graph topologies. Furthermore, we present new explicit algorithms that allow flexible parameter selection and do not require cocoercivity of the single-valued operators.
\end{enumerate}

The remainder of this paper is structured as follows. In Section~\ref{s:prelim}, we introduce some notations and background materials on set-valued operators, single-valued operators and some useful lemmas. We present our general approach to distributed operator splitting in Section~\ref{s:framework}, with our main Algorithm~\ref{algo:full} and its convergence results in Theorem~\ref{t:cvg_wco} and Theorem~\ref{t:cvg_co}. In Section~\ref{s:graphs}, we demonstrate how to obtain graph-based algorithms from our framework, simultaneously deriving new algorithms and recovering existing ones. Section~\ref{s:num_exp} presents numerical experiments that demonstrate the generality of the proposed framework and examine its performance under various choices of the coefficient matrices and parameters.

\section{Preliminaries}
\label{s:prelim}

Throughout this work, $\mathcal{H}$ is a real Hilbert space with inner product $\langle \cdot, \cdot\rangle$ and induced norm $\|\cdot\|$. Strong convergence and weak convergence of sequences are denoted by $\rightarrow$ and $\rightharpoonup$, respectively. The set of nonnegative integers is denoted by $\mathbb{N}$ and the set of real numbers by $\mathbb{R}$. We use $A\colon \mathcal{H}\rightrightarrows \mathcal{H}$ to indicate that $A$ is a set-valued operator on $\mathcal{H}$ and use $A\colon \mathcal{H}\to \mathcal{H}$ to indicate that $A$ is a single-valued operator on $\mathcal{H}$.

Let $A$ be an operator on $\mathcal{H}$. The \emph{domain} of $A$ is $\dom A :=\{x\in \mathcal{H}: Ax\neq \varnothing\}$, the \emph{graph} of $A$ is $\gra A :=\{(x,u)\in \mathcal{H}\times \mathcal{H}: u\in Ax\}$, and the set of \emph{fixed points} of $A$ is $\Fix A :=\{x\in \mathcal{H}: x\in Ax\}$. The \emph{resolvent} of $A$ is defined by
\begin{align*}
J_A :=(\Id +A)^{-1},    
\end{align*}
where $\Id$ is the identity operator. We say that $A$ is \emph{monotone} if for all $(x, u), (y, v)\in \gra A$,
\begin{align*}
\langle x -y, u -v\rangle \geq 0
\end{align*}
and that $A$ is \emph{maximally monotone} if it is monotone and there exists no monotone operator whose graph properly contains $\gra A$.

An operator $B\colon \mathcal{H}\to \mathcal{H}$ is said to be \emph{$\ell$-Lipschitz continuous} for $\ell\in [0, +\infty)$ if, for all $x, y\in \mathcal{H}$,
\begin{align*}
\|Bx -By\| \leq \ell\|x -y\|;
\end{align*}
and \emph{$\frac{1}{\ell}$-cocoercive} for $\ell\in (0, +\infty)$ if, for all $x, y\in \mathcal{H}$, 
\begin{align*}
\langle Bx -By, x -y \rangle \geq \frac{1}{\ell}\|Bx -By\|^2.
\end{align*}
By the definition and the Cauchy--Schwarz inequality, every $\frac{1}{\ell}$-cocoercive operator is monotone and $\ell$-Lipschitz continuous. In turn, every monotone and Lipschitz continuous operator is maximally monotone \cite[Corollary 20.28]{BC17}.

We recall from \cite[Definition~2.1]{BDP22} that an operator $T\colon \mathcal{H}\to \mathcal{H}$ is \emph{conically $\rho$-averaged} if $\rho \in (0, +\infty)$ and for all $x, y\in \mathcal{H}$, 
\begin{align*}
\|Tx -Ty\|^2 +\frac{1-\rho}{\rho}\|(\Id -T)x -(\Id -T)y\|^2 \leq \|x -y\|^2.
\end{align*}
Similarly, we introduce the following notion of conical quasiaveragedness.

\begin{definition}
An operator $T\colon \mathcal{H}\to \mathcal{H}$ is said to be \emph{conically $\rho$-quasiaveraged} if $\rho\in (0, +\infty)$ and for all $x\in \mathcal{H}$ and all $y\in \Fix T$, 
\begin{align*}
\|Tx -y\|^2 +\frac{1-\rho}{\rho}\|(\Id -T)x\|^2 \leq \|x -y\|^2.
\end{align*}
\end{definition}
It is clear from the definition that every conically $\rho$-averaged operator is conically $\rho$-quasiaveraged. The following result slightly extends \cite[Theorem~5.15]{BC17} and \cite[Proposition~2.9]{BDP22}. Recall that a sequence $(z^k)_{k\in \mathbb{N}}$ is \emph{Fej\'er monotone} with respect to a nonempty set $V$ if, for all $v\in V$ and all $k\in \mathbb{N}$,
\begin{align*}
\|z^{k+1} -v\| \leq \|z^k -v\|.   
\end{align*}

\begin{proposition}[Krasnosel'ski\u{\i}--Mann iterations]
\label{p:KM}
Let $T$ be a conically $\rho$-quasiaveraged operator with $\Fix T \neq \varnothing$. Let $z^0 \in \mathcal{H}$. For each $k\in\mathbb{N}$, set
\begin{align*}
z^{k+1} = (1 -\zeta_k)z^k + \zeta_k Tz^k,
\end{align*}
where $(\zeta_k)_{k\in\mathbb{N}}$ is a sequence in $[0, 1/\rho]$ such that $\liminf_{k\to+\infty} \zeta_k(1-\rho\zeta_k) > 0$. Then the following hold:
\begin{enumerate}
\item 
\label{p:KM_Fejer}
$(z^k)_{k\in \mathbb{N}}$ is Fej\'er monotone with respect to $\Fix T$.
\item 
\label{p:KM_asymp}
$(\Id -T)z^k \to 0$ as $k\to +\infty$.
\item 
\label{p:KM_ergodic}
$\|\frac{1}{k+1}\sum_{t=0}^k (\Id -T)z^t\| = O(\frac{1}{\sqrt{k}})$ as $k\to +\infty$.
\end{enumerate}
\end{proposition}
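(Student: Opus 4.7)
The plan is to derive a single master inequality relating $\|z^{k+1} - y\|^2$ and $\|z^k - y\|^2$ for every $y \in \Fix T$, and then to read off the three conclusions from it. The key algebraic ingredient is the identity
\[
\|(1-\alpha) a + \alpha b\|^2 = (1-\alpha)\|a\|^2 + \alpha\|b\|^2 - \alpha(1-\alpha)\|a-b\|^2,
\]
which is valid for every real $\alpha$, not only for $\alpha \in [0,1]$, as a direct expansion shows. Applying it with $\alpha = \zeta_k$, $a = z^k - y$, $b = Tz^k - y$, and then invoking the conical $\rho$-quasiaveragedness bound $\|Tz^k - y\|^2 \leq \|z^k - y\|^2 - \tfrac{1-\rho}{\rho}\|(\Id - T)z^k\|^2$, the coefficient of $\|(\Id - T)z^k\|^2$ should collapse to the single term $\tfrac{\zeta_k(1-\rho\zeta_k)}{\rho}$, producing the master inequality
\[
\|z^{k+1} - y\|^2 \leq \|z^k - y\|^2 - \frac{\zeta_k(1-\rho\zeta_k)}{\rho}\|(\Id - T)z^k\|^2.
\]
Because $\zeta_k \in [0,1/\rho]$ forces $\zeta_k(1-\rho\zeta_k) \geq 0$, assertion \ref{p:KM_Fejer} is immediate.

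For \ref{p:KM_asymp}, I would telescope the master inequality from $0$ to $N$ to obtain
\[
\sum_{k=0}^{N} \zeta_k(1-\rho\zeta_k)\|(\Id - T)z^k\|^2 \leq \rho\|z^0 - y\|^2.
\]
The hypothesis $\liminf_k \zeta_k(1-\rho\zeta_k) > 0$ supplies some $\delta > 0$ and $k_0 \in \mathbb{N}$ with $\zeta_k(1-\rho\zeta_k) \geq \delta$ for every $k \geq k_0$; letting $N \to \infty$ then yields $\sum_{k=0}^{\infty} \|(\Id - T)z^k\|^2 < +\infty$, whence $(\Id - T)z^k \to 0$.

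For \ref{p:KM_ergodic}, Jensen's inequality (convexity of $\|\cdot\|^2$) gives
\[
\left\|\frac{1}{k+1}\sum_{t=0}^{k}(\Id - T)z^t\right\|^2 \leq \frac{1}{k+1}\sum_{t=0}^{k}\|(\Id - T)z^t\|^2 \leq \frac{C}{k+1},
\]
where $C := \sum_{t=0}^{\infty} \|(\Id - T)z^t\|^2 < \infty$ is the finite total obtained in the previous step; taking square roots produces the claimed $O(1/\sqrt{k})$ rate.

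The only step worth flagging as an obstacle is the algebraic bookkeeping behind the master inequality: one must verify carefully that the coefficient $\tfrac{1-\rho}{\rho} + (1-\zeta_k)$ of $\|(\Id - T)z^k\|^2$ simplifies \emph{exactly} to $\tfrac{1-\rho\zeta_k}{\rho}$, which is precisely what makes the admissible range $\zeta_k \in [0, 1/\rho]$ natural. Otherwise the argument mirrors the classical Krasnosel'ski\u{\i}--Mann proofs in \cite[Theorem~5.15]{BC17} and \cite[Proposition~2.9]{BDP22}; the slight extension here is that conical quasiaveragedness is tested only at points of $\Fix T$, which is exactly what the recursion needs.
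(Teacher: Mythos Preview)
Your proposal is correct and follows essentially the same approach as the paper: both derive the master inequality $\|z^{k+1}-y\|^2 \leq \|z^k-y\|^2 - \tfrac{1}{\rho}\zeta_k(1-\rho\zeta_k)\|(\Id-T)z^k\|^2$ via the convex-combination identity and the quasiaveragedness bound, then telescope to obtain $\sum_k \|(\Id-T)z^k\|^2 < +\infty$ from the $\liminf$ hypothesis, and finally apply convexity of $\|\cdot\|^2$ for the ergodic rate. The coefficient simplification you flag is exactly the one the paper carries out.
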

\begin{proof}
\ref{p:KM_Fejer}: For all $v\in\Fix T$ and all $k\in\mathbb{N}$, by the conical quasiaveragedness of $T$,
\begin{align}
\label{eq:Fejer}
\| z^{k+1} -v\|^2 
&= \left\|(1 -\zeta_k)(z^k -v) +\zeta_k(Tz^k -v)\right\| \notag \\
&= (1 -\zeta_k)\|z^k -v\|^2 +\zeta_k\|Tz^k -v\|^2 -\zeta_k(1 -\zeta_k)\|z^k -Tz^k\|^2 \notag \\
&\leq (1 -\zeta_k)\|z^k -v\|^2 +\zeta_k\left(\|z^k -v\|^2 -\frac{1 -\rho}{\rho}\|(\Id -T)z^k\|^2\right) -\zeta_k(1 -\zeta_k)\|(\Id -T)z^k\|^2 \notag \\
&= \|z^k -v\|^2 -\frac{1}{\rho}\zeta_k(1 -\rho\zeta_k)\|(\Id -T)z^k\|^2.
\end{align}
Hence, $(z^k)_{k\in\mathbb{N}}$ is Fej\'er monotone with respect to $\Fix T$.

\ref{p:KM_asymp}: Telescoping \eqref{eq:Fejer} over $k\in\mathbb{N}$ yields 
\begin{align*}
\frac{1}{\rho}\sum_{k=0}^{+\infty} \zeta_k(1 -\rho\zeta_k)\|(\Id -T)z^k\|^2 \leq \|z^0 -v\|^2 < +\infty.
\end{align*}
Since $\rho >0$ and $\liminf_{k\rightarrow +\infty} \zeta_k(1-\rho\zeta_k) >0$, we obtain
\begin{align}
\label{eq:asymptotic}
c :=\sum_{k=0}^{+\infty} \|(\Id -T)z^k\|^2 < +\infty,
\end{align}
which implies that $(\Id -T)z^k \to 0$ as $k\to +\infty$.

\ref{p:KM_ergodic}: Using the convexity of the norm-squared and \eqref{eq:asymptotic}, we have that
\begin{align*}
\left\|\frac{1}{k+1}\sum_{t=0}^k (\Id -T)z^t\right\|^2 \leq \frac{1}{k+1}\sum_{t=0}^k \|(\Id -T)z^t\|^2 < \frac{c}{k+1},
\end{align*}
which completes the proof.
\end{proof}

Given a matrix $M$, its \emph{range} and \emph{kernel} are denoted by $\ran M$ and $\ker M$, respectively. Its \emph{transpose} is denote $M^{\top}$, and its \emph{Moore--Penrose pseudoinverse} is denoted $M^\dag$. To denote that $M$ is positive semidefinite, we write $M \succeq 0$.

\begin{lemma}
\label{l:matrix}
Let $M\in \mathbb{R}^{n\times m}$ and $L\in \mathbb{R}^{p\times n}$ be matrices. Then the following assertions are equivalent:
\begin{enumerate}
\item\label{l:matrix_exist}
There exists $U\in \mathbb{R}^{p\times m}$ such that $UM^{\top} =L$.
\item\label{l:matrix_ran}
$\ran L^{\top}\subseteq \ran M$.
\end{enumerate}
Moreover, if one of these assertions holds, then $U =L(M^{\top})^\dag$ is the matrix with minimal Frobenius norm satisfying $UM^{\top} =L$.
\end{lemma}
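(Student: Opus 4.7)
The plan is to reduce everything to the transposed equation
\begin{equation*}
M U^\top = L^\top
\end{equation*}
and apply, column by column, standard properties of the Moore--Penrose pseudoinverse. This immediately dissolves the statement into column-wise assertions about the usual linear system $M x = b$ with $b \in \mathbb{R}^n$.

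For \ref{l:matrix_exist}$\Rightarrow$\ref{l:matrix_ran}: transposing $U M^\top = L$ yields $M U^\top = L^\top$, so each column of $L^\top$ is of the form $M(U^\top)_{:j}$ and therefore lies in $\ran M$; taking the span gives $\ran L^\top \subseteq \ran M$. For \ref{l:matrix_ran}$\Rightarrow$\ref{l:matrix_exist}, I would set $U := L (M^\top)^\dag$ and verify the identity $U M^\top = L$. Using $(M^\top)^\dag = (M^\dag)^\top$, a direct computation gives $U M^\top = L\, (M^\top)^\dag M^\top$, and the standard pseudoinverse identity states that $(M^\top)^\dag M^\top \in \mathbb{R}^{n\times n}$ is the orthogonal projector onto $\ran((M^\top)^\top) = \ran M$. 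Hypothesis \ref{l:matrix_ran} says exactly that every row of $L$, viewed as an element of $\mathbb{R}^n$, lies in $\ran M$; hence this projector acts as the identity on them from the right, and we conclude $U M^\top = L$.

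For the moreover part, I would invoke the classical minimum-norm property of the pseudoinverse: if $M x = b$ is consistent, then $x^\star = M^\dag b$ is its unique solution lying in $(\ker M)^\perp = \ran M^\top$, and every other solution has the form $x^\star + z$ with $z \in \ker M$ and $z \perp x^\star$, so $\|x^\star + z\|^2 = \|x^\star\|^2 + \|z\|^2 \geq \|x^\star\|^2$. Applying this column-wise to $M U^\top = L^\top$ (each column being consistent by \ref{l:matrix_ran}), the unique column-wise minimizer of the Euclidean norm is $(U^\top)_{:j} = M^\dag (L^\top)_{:j}$, i.e.\ $U^\top = M^\dag L^\top$, equivalently $U = L (M^\top)^\dag$. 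Since the Frobenius norm decomposes as $\|U\|_F^2 = \|U^\top\|_F^2 = \sum_j \|(U^\top)_{:j}\|^2$, this column-wise minimizer is exactly the Frobenius-norm minimizer among all admissible $U$.

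I do not anticipate any real obstacle: the proof is essentially bookkeeping with transposes, together with the two standard pseudoinverse facts $A^\dag A = \Pi_{\ran A^\top}$ and ``$A^\dag b$ is the minimum-norm solution of $A x = b$'' whenever $b \in \ran A$. The only point requiring a bit of care is matching dimensions and remembering $(M^\top)^\dag = (M^\dag)^\top$ so that the formula $U = L (M^\top)^\dag$ indeed lives in $\mathbb{R}^{p \times m}$.
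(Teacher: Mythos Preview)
Your proposal is correct and follows essentially the same route as the paper: both transpose to $MU^\top = L^\top$, use that the columns of $L^\top$ lie in $\ran M$, and invoke the minimum-norm property of the pseudoinverse (the paper cites Penrose directly, you unpack it column-wise). The only cosmetic difference is that for \ref{l:matrix_ran}$\Rightarrow$\ref{l:matrix_exist} you construct $U = L(M^\top)^\dag$ explicitly and verify it via the projector identity, whereas the paper first argues abstract existence and only afterwards identifies the minimum-norm solution.
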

\begin{proof}
On the one hand, if $UM^{\top}=L$, then $MU^{\top}=L^{\top}$. The existence of $U$ implies that $\ran L^{\top}=\ran(MU^{\top}) \subseteq \ran M$. On the other hand, by using \cite[Section 3.1]{GS09}, if $\ran L^{\top}\subseteq \ran M$, the columns of $L^{\top}$ lie in the span of the columns of $M$. Then there exists a matrix $U\in\mathbb{R}^{p\times m}$ such that $MU^{\top}=L^{\top}$, which is equivalent to $UM^{\top}=L$. 

According to \cite[Theorem]{Pen56}, the minimum norm solution $\|U^{\top}\|_F=\|U\|_F$ (where $\|\cdot\|_F$ is the Frobenius norm) of the equation $MU^{\top} = L^{\top}$ is $U^{\top}=M^\dag L^{\top}=((M^{\top})^\dag)^{\top} L^{\top} = (L(M^{\top})^\dag)^{\top}$. As a result, $U=L(M^{\top})^\dag$ has minimal norm among those satisfying $UM^{\top}=L$, which completes the proof. 
\end{proof}

\begin{lemma}[{\cite[Section~4.1]{GS09}}]
\label{l:ran_ker}
    Given a matrix $M\in\mathbb{R}^{n \times m}$, the kernel of $M^{\top}$ and the column space of $M$ are orthogonal subspaces of $\mathbb{R}^{n}$, i.e., $(\ker M^{\top})^\perp=\ran M$. 
\end{lemma}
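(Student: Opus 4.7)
The plan is to prove $(\ker M^{\top})^\perp = \ran M$ by a double inclusion, treating $\mathbb{R}^n$ with its standard inner product.

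First I would handle the easy inclusion $\ran M \subseteq (\ker M^{\top})^\perp$. Given $y = Mx \in \ran M$ and any $z \in \ker M^{\top}$, the adjoint identity gives
\begin{equation*}
\langle y, z\rangle = \langle Mx, z\rangle = \langle x, M^{\top}z\rangle = \langle x, 0\rangle = 0,
\end{equation*}
so $y \perp z$, and hence $y \in (\ker M^{\top})^\perp$.

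For the reverse inclusion $(\ker M^{\top})^\perp \subseteq \ran M$, I would use the finite-dimensional orthogonal decomposition $\mathbb{R}^n = \ran M \oplus (\ran M)^\perp$, which is available because $\ran M$ is a linear subspace of a finite-dimensional inner product space. Given $y \in (\ker M^{\top})^\perp$, write $y = u + v$ with $u \in \ran M$ and $v \in (\ran M)^\perp$. The condition $v \in (\ran M)^\perp$ means $\langle v, Mx\rangle = \langle M^{\top}v, x\rangle = 0$ for every $x \in \mathbb{R}^m$, which forces $M^{\top}v = 0$, i.e., $v \in \ker M^{\top}$. Consequently $\langle y, v\rangle = 0$, and expanding using $u \perp v$ gives $0 = \langle u + v, v\rangle = \|v\|^2$, so $v = 0$ and $y = u \in \ran M$.

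There is no real obstacle here; this is a standard fact from finite-dimensional linear algebra and in fact the paper cites \cite{GS09} for it. A purely dimensional alternative would be to combine the first inclusion with rank-nullity $\dim\ker M^{\top} + \dim\ran M^{\top} = n$ and the equality of row and column rank to obtain $\dim(\ker M^{\top})^\perp = n - \dim\ker M^{\top} = \dim\ran M^{\top} = \dim\ran M$, forcing equality. I prefer the decomposition approach because it gives the inclusion directly without invoking row rank equals column rank.
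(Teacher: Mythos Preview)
Your proof is correct. The paper does not actually prove this lemma; it merely cites it as a standard fact from Strang's \emph{Introduction to Linear Algebra}, so there is no proof in the paper to compare against. Your double-inclusion argument via the orthogonal decomposition $\mathbb{R}^n = \ran M \oplus (\ran M)^\perp$ is a clean and self-contained way to establish the result, and the dimensional alternative you sketch is equally valid.
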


When the context is clear, we identify matrices $M=(M_{ij})\in\mathbb{R}^{n\times m}$ with the Kronecker product $(M\otimes \Id)\colon \mathcal{H}^m \to \mathcal{H}^n$. Specifically, for any $\mathbf{z}=(z_1,\dots, z_m)\in\mathcal{H}^m$, we write $(M\otimes \Id)\mathbf{z}$ by $M\mathbf{z}$. Let $A_1, \dots, A_n \colon\mathcal{H} \rightrightarrows \mathcal{H}$ be maximally monotone operators. For any $\mathbf{x}=(x_1, \dots, x_n)\in \mathcal{H}^n$, we define the operator $\mathbf{A}\colon \mathcal{H}^n \rightrightarrows \mathcal{H}^n$ by $\mathbf{Ax} := A_1x_1\times \dots\times A_n x_n$. It follows that $\mathbf{A}$ is also a maximally monotone operator. As a consequence, its resolvent $J_{\mathbf{A}}\colon \mathcal{H}^n \rightarrow \mathcal{H}^n$ is given by $J_{\mathbf{A}}=(J_{A_1}, \dots, J_{A_n})$. Similarly, for the single-valued operators $B_1, \dots, B_p\colon \mathcal{H} \rightarrow \mathcal{H}$, we define $\mathbf{B}: \mathcal{H}^p \rightarrow \mathcal{H}^p$ for any $\mathbf{u}=(u_1, \dots, u_p)\in \mathcal{H}^p$ by $\mathbf{Bu} := (B_1 u_1, \dots, B_p u_p)$.

\section{A general approach to distributed operator splitting}
\label{s:framework}

In this section, motivated by the framework in \cite{Tam23}, we propose Algorithm~\ref{algo:full} along with its convergence analysis for solving \eqref{gen_prob}, where $n \geq 2$ and $1 \leq p \leq n-1$. Here, even  though the number of single-valued operators differs from the number of set-valued operators by one or more, we can always derive schemes where these numbers match, i.e., $p=n$, depending on specific scenarios, as shown in Remark~\ref{r:ps}.  
 
\begin{tcolorbox}
[boxsep=0pt,left=0pt,right=0pt,top=0pt,bottom=0pt, colback=blue!10!white,colframe=blue!30!white,
    boxrule=0pt,breakable]
\begin{algorithm}
\label{algo:full}
Let $M =(M_{ij}) \in \mathbb{R}^{n \times m}$, $N =(N_{ij}) \in \mathbb{R}^{n \times n}$, $P =(P_{ij})\in\mathbb{R}^{n\times p}$, $Q =(Q_{ij})\in\mathbb{R}^{n\times p}$, and $R =(R_{ij}) \in \mathbb{R}^{p\times n}$. Let $\gamma \in (0, +\infty)$, $\delta_1, \dots, \delta_n\in (0, +\infty)$ and $(\lambda_k)_{k\in \mathbb{N}}\subset (0, +\infty)$. Let $z^0_1, \dots, z^0_{m}\in\mathcal{H}$. For each $k\in\mathbb{N}$, compute
\begin{align*}
\begin{cases}
x^k_i &=J_{\frac{\gamma}{\delta_i}A_i}\Bigg(\frac{1}{\delta_i}\Big(\sum_{j=1}^m M_{ij}z^k_j +\sum_{j=1}^n N_{ij}x^k_j - \gamma \sum_{j=1}^p (P_{ij} -Q_{ij})B_j\left(\sum_{l=1}^n R_{jl}x^k_l\right) \\ 
&\qquad\qquad\qquad - \gamma \sum_{j=1}^p Q_{ij}B_j\left(\sum_{l=1}^n P_{lj}x^k_l\right)\Big)\Bigg),\quad i \in \{1, \dots, n\}, \\
z^{k+1}_i \!\!\!\!\! &=z^k_i -\lambda_k\sum_{j=1}^n M_{ji}x^k_j,\quad i\in \{1, \dots, m\},
\end{cases}
\end{align*}
or equivalently,
\begin{align*}
\begin{cases}
\mathbf{x}^k &=J_{\gamma D^{-1}\mathbf{A}}( D^{-1}( M\mathbf{z}^k +  N\mathbf{x}^k -\gamma(P-Q)\mathbf{B}(R\mathbf{x}^k)-\gamma Q\mathbf{B}(P^{\top}\mathbf{x}^k))), \\
\mathbf{z}^{k+1}\!\!\!\!\! &=\mathbf{z}^k - \lambda_k M^{\top}\mathbf{x}^k,
\end{cases}
\end{align*}
where $D =\diag(\delta_1, \dots, \delta_n)$.
\end{algorithm}
\end{tcolorbox}

\begin{remark}
\label{r:OnAlgo}
Some comments on Algorithm~\ref{algo:full} are in order.
\begin{enumerate}
\item\label{r:OnAlgo_explicit}
Algorithm~\ref{algo:full} is generally implicit but becomes explicit as soon as $N$, $P$, $Q$ are assumed to be lower triangular with zeros on the diagonal, $R$ is lower triangular, and $Q_{(j+1)j} =\dots =Q_{ij} =0$ whenever $P_{ij}\neq 0$ with $i>j$. In the latter case, the computation of $x_1^k$ requires only information from some $z_j^k$ with $j\in \{1,\dots, m\}$, while for each $i\in \{2,\dots, n\}$, the component $x_i^k$ depends only on some $z_j^k$ with $j\in\{1,\dots,m\}$ and previously computed components $x_j^k$ with $j<i$. Note that $Q_{1j} = \dots = Q_{jj} = 0$ due to the lower triangular structure of $Q$. Therefore, if $P_{ij}\neq 0$, then setting $Q_{(j+1)j} = \dots = Q_{ij} = 0$ ensures that $B_j(\dots + P_{ij}x_i^k + \dots)$ in $\mathbf{B}(P^{\top}\mathbf{x}^k)$ is excluded from the computation of $x_1^k, \dots, x_i^k$. Specifically, Algorithm~\ref{algo:full} can be written explicitly as
\begin{align*}
\begin{cases}
x^k_1 &=J_{\frac{\gamma}{\delta_1}A_1}\Bigg(\frac{1}{\delta_1}\sum_{j=1}^m M_{1j}z^k_j\Bigg), \\
x^k_i &=J_{\frac{\gamma}{\delta_i}A_i}\Bigg(\frac{1}{\delta_i}\Big(\sum_{j=1}^m M_{ij}z^k_j + \sum_{j=1}^{i-1} N_{ij}x^k_j - \gamma \sum_{j=1}^{\min\{i-1, p\}} (P_{ij} -Q_{ij})B_j\left(\sum_{l=1}^{j} R_{jl}x^k_l\right) \\ 
&\qquad\qquad\qquad - \gamma \sum_{j=1}^{\min\{i-1, p\}} Q_{ij}B_j\left(\sum_{l=j+1}^{n} P_{lj}x^k_l\right)\Big)\Bigg),\quad i \in \{2, \dots, n\}, \\
z^{k+1}_i\!\!\!\!\! &=z^k_i -\lambda_k\sum_{j=1}^n M_{ji}x^k_j,\quad i\in \{1, \dots, m\}.
\end{cases}
\end{align*}
Nevertheless, the subsequent analysis does not require Algorithm~\ref{algo:full} to be explicit and remains valid for the general implicit case. The explicit formulation is thus a practical convenience rather than a theoretical necessity.

\item
When $Q =0$, Algorithm~\ref{algo:full} reduces to
\begin{align*}
\begin{cases}
x^k_i &=J_{\frac{\gamma}{\delta_i}A_i}\left(\frac{1}{\delta_i}\left(\sum_{j=1}^m M_{ij}z^k_j + \sum_{j=1}^n N_{ij}x^k_j - \gamma \sum_{j=1}^p P_{ij}B_j\left(\sum_{l=1}^n R_{jl}x^k_l\right)\right)\right),\\
&\qquad\qquad \qquad i \in \{1, \dots, n\}, \\
z^{k+1}_i \!\!\!\!\!&=z^k_i -\lambda_k\sum_{j=1}^n M_{ji}x^k_j,\quad i\in \{1, \dots, m\},
\end{cases}
\end{align*}
or equivalently,
\begin{align*}
\begin{cases}
\mathbf{x}^k &=J_{\gamma D^{-1}\mathbf{A}}( D^{-1}( M\mathbf{z}^k +  N\mathbf{x}^k -\gamma  P\mathbf{B}(R\mathbf{x}^k))), \\
\mathbf{z}^{k+1} \!\!\!\!\!&=\mathbf{z}^k - \lambda_k M^{\top}\mathbf{x}^k.
\end{cases}
\end{align*}

\item\label{r:OnAlgo_operator}
The sequence $(\mathbf{z}^k)_{k\in \mathbb{N}}$ generated by Algorithm~\ref{algo:full} can be obtained via
\begin{align*}
\mathbf{z}^{k+1} =\left(1 -\frac{\lambda_k}{\theta}\right)\mathbf{z}^k +\frac{\lambda_k}{\theta} T\mathbf{z}^k,    
\end{align*}
where $T\colon \mathcal{H}^{m} \to \mathcal{H}^{m}$ defined by
\begin{align*}
T\mathbf{z} := \mathbf{z} -\theta M^{\top}\mathbf{x}
\end{align*}
with $\theta\in (0, +\infty)$ and $\mathbf{x}=(x_1,\dots, x_n) \in \mathcal{H}^n$ given via the solution operator $S\colon\mathcal{H}^m\to\mathcal{H}^n$
\begin{align*}
\mathbf{x} = S\mathbf{z} := J_{\gamma D^{-1}\mathbf{A}}( D^{-1}( M\mathbf{z} +  N\mathbf{x} -\gamma (P -Q)\mathbf{B}(R\mathbf{x}) -\gamma Q\mathbf{B}(P^{\top}\mathbf{x}))).
\end{align*}

\item\label{r:OnAlgo_xz}
Given the sequences $(\mathbf{z}^k)_{k\in \mathbb{N}}$ and $(\mathbf{x}^k)_{k\in \mathbb{N}}$ generated by Algorithm~\ref{algo:full}, for all $s =(s_1, \dots, s_n)^\top \in \ran M$, we have that $s = Mv$ for some $v\in \mathbb{R}^{m}$, and hence, for all $k\in \mathbb{N}$,
\begin{align*}
\sum_{i=1}^n s_i x_i^k =(s^\top \otimes \Id)\mathbf{x}^k =(v^\top M^{\top} \otimes \Id)\mathbf{x}^k =(v^\top \otimes \Id) M^{\top}\mathbf{x}^k =\frac{1}{\theta}(v^\top \otimes \Id)(\Id -T)\mathbf{z}^k.   
\end{align*}

\item\label{r:OnAlgo_reduced}
In general, $m\geq n-1$, where $m=n-1$ corresponds to the \emph{minimal lifting} defined in \cite{Ryu20}. In the case when $m >n$, by setting $\mathbf{v}^k = M\mathbf{z}^k$, Algorithm~\ref{algo:full} becomes
\begin{align*}
\begin{cases}
\mathbf{x}^k &=J_{\gamma D^{-1}\mathbf{A}}( D^{-1}(\mathbf{v}^k +  N\mathbf{x}^k -\gamma (P-Q)\mathbf{B}(R\mathbf{x}^k)-\gamma Q\mathbf{B}(P^{\top}\mathbf{x}^k))), \\
\mathbf{v}^{k+1} \!\!\!\!\!&=\mathbf{v}^k - \lambda_k M M^{\top}\mathbf{x}^k.
\end{cases}
\end{align*}
This reformulation allows for working in a reduced-dimensional space with $\mathbf{v}^k \in \mathcal{H}^n$, rather than using $\mathbf{z}^k\in \mathcal{H}^m$ from the higher-dimensional space. A similar idea was used in \cite[Remark 2.7]{Tam23}.
\end{enumerate}
\end{remark}

In view of Remark~\ref{r:OnAlgo}\ref{r:OnAlgo_operator}, we start our analysis by investigating the properties of operator $T$. From now on, set $\bOne_n :=(1,\dots,1)\in\mathbb{R}^n$ and omit the subscript when the dimension is clear from the context. For example, if $R\in\mathbb{R}^{p\times n}$, then $R\bOne = \bOne$ is understood with the first $\bOne$ in $\mathbb{R}^n$ and the second in $\mathbb{R}^p$. We consider the following assumptions on the coefficient matrices.

\begin{assumption}
\label{a:stand}
The coefficient matrices $D\in\mathbb{R}^{n\times n}$, $M \in \mathbb{R}^{n \times m}$, $N \in \mathbb{R}^{n \times n}$, $P\in\mathbb{R}^{n\times p}$, and $R \in \mathbb{R}^{p\times n}$ satisfy the following properties.
\begin{enumerate}
 \item\label{a:stand_kerM}
$\ker M^{\top} = \lspan\{\bOne\}$.
\item\label{a:stand_N}
$\sum_{i,j=1}^n N_{ij} = \sum_{i=1}^n \delta_i$.
\item\label{a:stand_P}
$P^{\top}\bOne=\bOne$.
\item\label{a:stand_R}
$R\bOne=\bOne$.
\item\label{a:stand_neg_semidef}
$2D - N - N^{\top} - MM^{\top}  \succeq 0$.
\end{enumerate}
\end{assumption}

In what follows, we denote
\begin{align*}
\Delta := \{ \mathbf{x} = (x_1, \dots, x_n) \in \mathcal{H}^n : x_1 = \dots = x_n \}.    
\end{align*}

\begin{remark}[Implications of the assumption]
\label{r:OnAssumption}
~
\begin{enumerate}
\item\label{r:OnAssumption_kerM} 
Assumption~\ref{a:stand}\ref{a:stand_kerM} implies that
\begin{align*}
\ker(M^{\top}\otimes \Id) = \Delta.    
\end{align*}
Indeed, let $\mathbf{x} = (x_1, \dots, x_n) \in \ker(M^{\top}\otimes \Id)$. By properties of the Kronecker product, for all $v \in \mathcal{H}$, $(\langle x_1, v \rangle, \dots, \langle x_n, v \rangle) \in \ker M^{\top}$, which together with Assumption~\ref{a:stand}\ref{a:stand_kerM} yields $\langle x_1, v \rangle = \dots = \langle x_n, v \rangle$. Therefore, $x_1 = \dots = x_n$, and so $\mathbf{x} \in \Delta$. Conversely, if $\mathbf{x} = (x_1, \dots, x_n)\in \Delta$, then $x_1 = \dots = x_n$, which means $\mathbf{x} \in \ker(M^{\top}\otimes \Id)$.
\item\label{r:OnAssumption_PR}
By Assumption~\ref{a:stand}\ref{a:stand_P}--\ref{a:stand_R}, for all $\mathbf{x} =(x,\dots, x)\in \Delta$, $ P^{\top}\mathbf{x} = R\mathbf{x} =\mathbf{x}$ and
\begin{align*}
(P -Q)\mathbf{B}(R\mathbf{x}) +Q\mathbf{B}(P^{\top}\mathbf{x}) =(P -Q)\mathbf{B}\mathbf{x} +Q\mathbf{B}\mathbf{x} = P\mathbf{B}\mathbf{x},   
\end{align*}
which implies that
\begin{align*}
\sum_{i=1}^n \left((P -Q)\mathbf{B}(R\mathbf{x}) +Q\mathbf{B}(P^{\top}\mathbf{x})\right)_i =\sum_{j=1}^{p} B_j x.    
\end{align*}

\item\label{r:OnAssumption_KU}
In our analysis, we will need to write $P^{\top}-R =UM^{\top}$ and $P^{\top}-Q^{\top} =KM^{\top}$ for some matrices $U$ and $K$. For this purpose, Assumption~\ref{a:stand}\ref{a:stand_kerM},\ref{a:stand_P}--\ref{a:stand_R} implies that $U =(P^{\top} -R)(M^{\top})^\dag \in\mathbb{R}^{p\times m}$ is the minimal norm solution satisfying $UM^{\top}=P^{\top}-R$. Moreover, if $Q^{\top}\bOne =\bOne $, then $K=(P^{\top}-Q^{\top})(M^{\top})^\dag\in \mathbb{R}^{p\times m}$ is the minimal norm solution satisfying $KM^{\top}=P^{\top}-Q^{\top}$. 

To justify the first point, observe that $(P^{\top}-R)\bOne= \bOne - \bOne = 0$ due to Assumption~\ref{a:stand}\ref{a:stand_P}--\ref{a:stand_R}. Therefore, using Lemma~\ref{l:ran_ker}, $\ran(P-R^{\top})\subseteq (\ker M^{\top})^\perp = \ran M$. By combining with Lemma~\ref{l:matrix}, $U =(P^{\top} -R)(M^{\top})^\dag \in\mathbb{R}^{p\times m}$ is the minimal norm solution satisfying $UM^{\top}=P^{\top}-R$. 

Similarly, if $Q^{\top}\bOne =\bOne $, then $(P^{\top}-Q^{\top})\bOne=\bOne -\bOne=0$, which yields $\ran(P-Q) \subseteq (\ker M^{\top})^\perp = \ran M$, and so $K=(P^{\top}-Q^{\top})(M^{\top})^\dag\in \mathbb{R}^{p\times m}$ is the minimal norm solution satisfying $KM^{\top}=P^{\top}-Q^{\top}$.
\end{enumerate}
\end{remark}

\begin{remark}[Simple selections for $P$, $Q$, and $R$]
\label{r:PQR}
As mentioned in Remark~\ref{r:OnAlgo}\ref{r:OnAlgo_explicit}, to make Algorithm~\ref{algo:full} explicit, we require $P$ and $Q$ to be lower triangular with zeros on the diagonal, $R$ to be lower triangular, and $Q_{(j+1)j}=\dots=Q_{ij}=0$ whenever $P_{ij}\neq0$ with $i>j$. For our analysis, we also impose $P^{\top}\bOne=\bOne $, $R\bOne=\bOne $, and either $Q^{\top}\bOne=\bOne $ or $Q=0$. In practice, however, $P$, $Q$, and $R$ are typically chosen so that each column of $P$ and $Q$ (if $Q\neq 0$) and each row of $R$ contains exactly one nonzero entry, which is equal to $1$. Examples of $P\in\mathbb{R}^{n\times p}$, $R\in\mathbb{R}^{p\times n}$, and $Q\in\mathbb{R}^{n\times p}$ include 
\begin{align}
    \label{eq:PQR1}
      P &=\left[ 
        \begin{array}{c} 
          0_{1\times p} \\ 
          \hline 
          \Id_{p} \\
          \hline
          0_{(n-1-p)\times p}
        \end{array} 
        \right],
    &
    Q &=\left[ 
        \begin{array}{c} 
          0_{(n-p)\times p} \\ 
          \hline 
          \Id_{p} 
        \end{array} 
        \right], 
  & 
    R &=\left[ 
        \begin{array}{c|c} 
          \Id_{p} & 0_{p\times (n-p)}
        \end{array} 
        \right], \\
    \label{eq:PQR2}
    P &=\left[ 
        \begin{array}{c} 
          0_{p\times p} \\ 
          \hline 
          1_{1\times p} \\
          \hline
          0_{(n-1-p)\times p}
        \end{array} 
        \right], 
  & 
      Q &=\left[ 
    \begin{array}{c} 
      0_{(n-1) \times (n-2)} \\ 
      \hline 
      1_{1 \times (n-2)} 
    \end{array} 
    \right], 
  & 
    R &=\left[
        \begin{array}{c|c}
            1_{p\times 1} & 0_{p\times (n-1)}               
    \end{array}\right].
\end{align}
It is clear that these selections satisfy Assumption~\ref{a:stand}\ref{a:stand_P}--\ref{a:stand_R} and $Q^{\top}\bOne=\bOne $. 
\end{remark}

\begin{lemma}[Fixed points and zeros]
\label{l:fixzeros}
Suppose that Assumption~\ref{a:stand}\ref{a:stand_kerM}--\ref{a:stand_R} holds. Then the following hold:
    \begin{enumerate}
    \item\label{l:fixzeros_T}
    If $\mathbf{z} \in \Fix T$ and $\mathbf{x} = S\mathbf{z}$, then $\mathbf{x}=(x, \dots, x) \in \Delta$ and $x\in \zer\left( \sum_{i=1}^n A_i + \sum_{j=1}^{p} B_j \right)$.
    \item\label{l:fixzeros_AB}
    If $x \in \zer\left( \sum_{i=1}^n A_i + \sum_{j=1}^{p} B_j \right)$, then there exists $\mathbf{z} \in \Fix T$ such that $\mathbf{x} = S\mathbf{z}$ and $\mathbf{x}=(x, \dots, x) \in \Delta$.
    \end{enumerate}
Consequently, $\Fix T \neq \varnothing$ if and only if $\zer\left( \sum_{i=1}^n A_i + \sum_{j=1}^{p} B_j \right) \neq \varnothing$.
\end{lemma}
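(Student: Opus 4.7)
The plan is to unpack the defining inclusion of $S$ and exploit the sum-conditions in Assumption~\ref{a:stand} via left-multiplication by $\bOne^\top$. By definition of the resolvent, $\mathbf{x} = S\mathbf{z}$ means exactly
\begin{equation*}
\gamma \mathbf{A}\mathbf{x} \ni M\mathbf{z} + (N - D)\mathbf{x} - \gamma (P-Q)\mathbf{B}(R\mathbf{x}) - \gamma Q\mathbf{B}(P^\top\mathbf{x}).
\end{equation*}
This inclusion is the algebraic object I will manipulate throughout.

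For \ref{l:fixzeros_T}, if $\mathbf{z}\in\Fix T$ then $M^\top\mathbf{x} = 0$, so by Remark~\ref{r:OnAssumption}\ref{r:OnAssumption_kerM}, $\mathbf{x} = (x,\dots,x) \in \Delta$. Remark~\ref{r:OnAssumption}\ref{r:OnAssumption_PR} then simplifies the forward term to $P\mathbf{B}\mathbf{x}$, so there exists $\mathbf{a}\in \mathbf{A}\mathbf{x}$ (i.e.\ $a_i\in A_i x$) with $\gamma\mathbf{a} = M\mathbf{z} + (N-D)\mathbf{x} - \gamma P\mathbf{B}\mathbf{x}$. Applying $\bOne^\top$ (i.e.\ summing components), the three pieces on the right vanish in turn: $\bOne^\top M = (M^\top\bOne)^\top = 0$ by Assumption~\ref{a:stand}\ref{a:stand_kerM}; $\bOne^\top(N-D)\mathbf{x} = (\sum_{ij}N_{ij} - \sum_i\delta_i)x = 0$ by Assumption~\ref{a:stand}\ref{a:stand_N}; and $\bOne^\top P\mathbf{B}\mathbf{x} = (P^\top\bOne)^\top\mathbf{B}\mathbf{x} = \bOne^\top\mathbf{B}\mathbf{x} = \sum_j B_j x$ by Assumption~\ref{a:stand}\ref{a:stand_P}. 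Therefore $\sum_i a_i + \sum_j B_j x = 0$, which is exactly $0 \in \sum_i A_i x + \sum_j B_j x$.

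For \ref{l:fixzeros_AB}, given $x\in\zer(\sum A_i + \sum B_j)$, pick $a_i \in A_i x$ with $\sum_i a_i + \sum_j B_j x = 0$ and set $\mathbf{x} = (x,\dots,x)$, $\mathbf{a} = (a_1,\dots,a_n)$. I want to produce $\mathbf{z}\in\mathcal{H}^m$ with
\begin{equation*}
M\mathbf{z} = D\mathbf{x} - N\mathbf{x} + \gamma P\mathbf{B}\mathbf{x} + \gamma\mathbf{a},
\end{equation*}
because then the resolvent inclusion (in its simplified form on $\Delta$ from Remark~\ref{r:OnAssumption}\ref{r:OnAssumption_PR}) will force $\mathbf{x} = S\mathbf{z}$, and $M^\top\mathbf{x} = 0$ will give $\mathbf{z} \in \Fix T$. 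The main (and really only) obstacle is solvability of this $\mathbf{z}$ equation. By Lemma~\ref{l:ran_ker} together with Assumption~\ref{a:stand}\ref{a:stand_kerM}, the range of $M\otimes\Id$ equals $\Delta^\perp$, i.e.\ consists of $\mathbf{y}\in\mathcal{H}^n$ with $\sum_i y_i = 0$. Repeating the $\bOne^\top$-computation from the previous paragraph, the right-hand side above sums to $(\sum_i\delta_i - \sum_{ij}N_{ij})x + \gamma\sum_j B_j x + \gamma\sum_i a_i = 0$, so it lies in $\ran(M\otimes\Id)$ and a solution $\mathbf{z}$ exists (one may take $\mathbf{z} = M^\dagger(\,\cdots)$ for explicitness).

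The final equivalence $\Fix T\neq\varnothing \iff \zer(\sum A_i + \sum B_j)\neq\varnothing$ then follows directly by combining \ref{l:fixzeros_T} and \ref{l:fixzeros_AB}.
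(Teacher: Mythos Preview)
Your proof is correct and follows essentially the same approach as the paper: both arguments unwind the resolvent inclusion, use $\ker(M^\top\otimes\Id)=\Delta$ to place $\mathbf{x}$ on the diagonal, simplify the forward term via Remark~\ref{r:OnAssumption}\ref{r:OnAssumption_PR}, and then sum components (left-multiply by $\bOne^\top$) to exploit Assumption~\ref{a:stand}\ref{a:stand_kerM}--\ref{a:stand_P}; for part~\ref{l:fixzeros_AB} both show the target right-hand side lies in $\Delta^\perp=\ran(M\otimes\Id)$ to guarantee solvability in $\mathbf{z}$. Your write-up is a bit more compact but there is no substantive difference.
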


\begin{proof}
\ref{l:fixzeros_T}: Assume $\mathbf{z} \in \Fix T$ and $\mathbf{x} = S\mathbf{z}$. Then $\mathbf{z}=T\mathbf{z}=\mathbf{z}-\theta M^{\top}\mathbf{x}$, which yields $ M^{\top}\mathbf{x}=0$, that is, $\mathbf{x} \in \ker(M^{\top}\otimes \Id)$. In view of Remark~\ref{r:OnAssumption}\ref{r:OnAssumption_kerM}, $\mathbf{x} = (x, \dots, x) \in \Delta$. 

Next, it follows from $\mathbf{x} = S\mathbf{z}$ that $ D^{-1}( M \mathbf{z} +  N \mathbf{x}) - \mathbf{x} \in \gamma D^{-1}(\mathbf{A}\mathbf{x} + (P-Q)\mathbf{B}(R\mathbf{x})+Q\mathbf{B}(P^{\top}\mathbf{x}))$, which is equivalent to $ M\mathbf{z}+ N\mathbf{x}- D\mathbf{x} \in \gamma\mathbf{A}\mathbf{x}+\gamma(P-Q)\mathbf{B}(R\mathbf{x})+\gamma Q\mathbf{B}(P^{\top}\mathbf{x})$.
By combining with Remark~\ref{r:OnAssumption}\ref{r:OnAssumption_PR} and Assumption~\ref{a:stand}\ref{a:stand_kerM}--\ref{a:stand_N},
    \begin{align*}
        \gamma \sum_{i=1}^n A_i x + \gamma \sum_{j=1}^{p}B_j x &= \gamma \sum_{i=1}^n(\mathbf{A}\mathbf{x})_i + \gamma\sum_{i=1}^n((P-Q)\mathbf{B}(R\mathbf{x})+Q\mathbf{B}(P^{\top}\mathbf{x}))_i \\
        &\ni ((\bOne^\top M) \otimes \Id) \mathbf{z} + \sum_{i,j=1}^n N_{ij} x - \sum_{i=1}^n \delta_ix \\
        &=((M^{\top}\bOne)^\top \otimes \Id) \mathbf{z} = 0.
    \end{align*}
Thus, $x \in \zer\left(\sum_{i=1}^n A_i + \sum_{j=1}^{p} B_j\right)$.

\ref{l:fixzeros_AB}: Assume $x \in \zer \left( \sum_{i=1}^n A_i + \sum_{j=1}^{p} B_j \right)$ and set $\mathbf{x} := (x, \dots, x) \in \Delta$. Then there exists $\mathbf{v} = (v_1, \dots, v_n) \in \mathbf{A} \mathbf{x}$ such that $\sum_{i=1}^n v_i +\sum_{j=1}^{p} B_j x = 0$. Set $\mathbf{y} = (y_1, \dots, y_n) \in \mathcal{H}^n$ where $\mathbf{y} := \gamma D^{-1}\mathbf{v} + \mathbf{x}$. Then $\mathbf{y}-\mathbf{x}=\gamma D^{-1} \mathbf{v} \in \gamma  D^{-1}\mathbf{A}\mathbf{x}$, and so $\mathbf{x} = J_{\gamma D^{-1}\mathbf{A}} (\mathbf{y})$. 

Now, we show that there exists $\mathbf{z} \in \mathcal{H}^{m}$ such that
\begin{align*}
\mathbf{y} = D^{-1}( M \mathbf{z} +  N \mathbf{x} - \gamma(P-Q)\mathbf{B}(R\mathbf{x})-\gamma Q\mathbf{B}(P^{\top}\mathbf{x})),    
\end{align*}
or equivalently, $ M\mathbf{z} = D\mathbf{y} -  N \mathbf{x} + \gamma(P-Q)\mathbf{B}(R\mathbf{x})+\gamma Q\mathbf{B}(P^{\top}\mathbf{x})$. It suffices to prove $ D\mathbf{y} -  N \mathbf{x} + \gamma(P-Q)\mathbf{B}(R\mathbf{x})+\gamma Q\mathbf{B}(P^{\top}\mathbf{x}) \in \ran(M\otimes \Id) = (\ker(M\otimes \Id)^{\top})^\perp = \Delta^\perp = \left\{ (x_1, \dots, x_n) \in \mathcal{H}^n : \sum_{i=1}^n x_i = 0 \right\}$. Indeed, using Remark~\ref{r:OnAssumption}\ref{r:OnAssumption_PR} and Assumption~\ref{a:stand}\ref{a:stand_kerM}--\ref{a:stand_N}, we have that
\begin{align*}
&\sum_{i=1}^n ( D\mathbf{y} -  N \mathbf{x} + \gamma (P-Q)\mathbf{B}(R\mathbf{x})+\gamma Q\mathbf{B}(P^{\top}\mathbf{x}))_i \notag \\
&= \sum_{i=1}^n \delta_i y_i - \sum_{i,j=1}^n N_{ij} x + \gamma\sum_{j=1}^{p}B_j x \\
&= \gamma\left( \sum_{i=1}^n v_i + \sum_{j=1}^{p}B_j x \right) + \sum_{i=1}^n \delta_i x_i - \sum_{i,j=1}^n N_{ij} x = 0.
\end{align*}
Therefore, $\mathbf{x} = S\mathbf{z}$ and $T\mathbf{z} =\mathbf{z} -\theta M^{\top}\mathbf{x}$. Since $\mathbf{x} \in \Delta$, it follows from Remark~\ref{r:OnAssumption}\ref{r:OnAssumption_kerM} that $M^{\top}\mathbf{x} =0$, hence $\mathbf{z} \in \Fix T$.
\end{proof}

\begin{lemma}[Conical (quasi)averagedness]
\label{l:nonexpansive}
Suppose that $A_1, \dots, A_n$ are maximally monotone and that Assumption~\ref{a:stand}\ref{a:stand_kerM}--\ref{a:stand_R} hold.
\begin{enumerate}
\item\label{Lipschitz}
Suppose that $B_1, \dots, B_{p}$ are monotone and ${\ell}$-Lipschitz continuous, and that $Q^{\top}\bOne =\bOne $. Then, for all $\mathbf{z}\in \mathcal{H}^{m}$, $\bar{\mathbf{z}}\in \Fix T$,
\begin{multline}\label{quasinonexpansive}
\| T\mathbf{z}-\bar{\mathbf{z}}\|^2 + \left(\frac{1-\theta}{\theta}-\frac{\gamma \ell}{\theta}\left(\|K\|^2+\|U\|^2\right)\right)\|(\Id-T)\mathbf{z}\|^2 \\
+ \theta\langle \mathbf{x}-\bar{\mathbf{x}}, [2 D-  N -  N^{\top}- M M^{\top} ]\mathbf{x}\rangle \leq \|\mathbf{z}-\bar{\mathbf{z}}\|^2,
\end{multline}
where $\mathbf{x} = S\mathbf{z}$, $\bar{\mathbf{x}} = S\bar{\mathbf{z}}$ and $K=(P^{\top}-Q^{\top})(M^{\top})^\dag$, $U=(P^{\top}-R)(M^{\top})^\dag$. In particular, if Assumption~\ref{a:stand}\ref{a:stand_neg_semidef} holds and $$\gamma \in \left(0, \frac{1}{\ell\left(\|K\|^2+\|U\|^2\right)}\right),$$ then $T$ is conically $\rho$-quasiaveraged with $\rho=\frac{\theta}{1-\gamma\ell\left(\|K\|^2+\|U\|^2\right)} > 0$.
    
\item\label{cocoercive}
Suppose that $B_1, \dots, B_p$ are $\frac{1}{\ell}$-cocoercive and that $Q=0$. Then, for all $\mathbf{z}, \bar{\mathbf{z}} \in \mathcal{H}^{m}$,
\begin{multline}\label{averagedness}
\| T\mathbf{z} - T\bar{\mathbf{z}} \|^2 + \left(\frac{1 - \theta}{\theta} -\frac{\gamma \ell}{2\theta}\|U\|^2\right)\| (\Id - T)\mathbf{z} - (\Id - T)\bar{\mathbf{z}} \|^2 \\
+\frac{\gamma\ell}{2\theta}\left\| U(\Id-T)\mathbf{z} - U(\Id-T)\bar{\mathbf{z}} +\frac{2\theta}{\ell}(\mathbf{B}(R\mathbf{x}) -\mathbf{B}(R\bar{\mathbf{x}}))\right\|^2 \\
+\theta \langle \mathbf{x} - \bar{\mathbf{x}}, [2 D-  N -  N^{\top} -  M  M^{\top} ] (\mathbf{x} - \bar{\mathbf{x}}) \rangle \leq \| \mathbf{z} - \bar{\mathbf{z}} \|^2,
\end{multline}
where $\mathbf{x} = S\mathbf{z}$, $\bar{\mathbf{x}} = S\bar{\mathbf{z}}$, and $U =(P^{\top} -R)(M^{\top})^\dag$. In particular, if Assumption~\ref{a:stand}\ref{a:stand_neg_semidef} holds and $$\gamma \in \left( 0, \frac{2}{ \ell\|U\|^2} \right),$$ then $T$ is conically $\rho$-averaged with $\rho = \frac{2\theta}{2-\gamma \ell\|U\|^2} > 0$.
\end{enumerate}
Here, we adopt the convention that $\frac{1}{\ell\left(\|K\|^2+\|U\|^2\right)}=+\infty$ if $\|K\|^2+\|U\|^2 =0$, and $\frac{2}{\ell\|U\|^2}=+\infty$ if $\|U\|=0$.
\end{lemma}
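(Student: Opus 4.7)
The plan is to derive a single master inequality from the monotonicity of $\mathbf{A}$ together with the implicit relation $\mathbf{x} =S\mathbf{z}$, and then specialize it to the two settings by decomposing the single-valued cross term via the identities $P^{\top} -R =UM^{\top}$ and $P^{\top} -Q^{\top} =KM^{\top}$ from Remark~\ref{r:OnAssumption}\ref{r:OnAssumption_KU}. Writing $\mathbf{u} := \mathbf{x} -\bar{\mathbf{x}}$, $\mathbf{w} := \mathbf{z} -\bar{\mathbf{z}}$, $\mathbf{b}_R := \mathbf{B}(R\mathbf{x}) -\mathbf{B}(R\bar{\mathbf{x}})$, and $\mathbf{b}_P := \mathbf{B}(P^{\top}\mathbf{x}) -\mathbf{B}(P^{\top}\bar{\mathbf{x}})$, the inclusion
\[
M\mathbf{z} +N\mathbf{x} -\gamma(P -Q)\mathbf{B}(R\mathbf{x}) -\gamma Q\mathbf{B}(P^{\top}\mathbf{x}) -D\mathbf{x} \in \gamma\mathbf{A}\mathbf{x}
\]
coming from $\mathbf{x} =S\mathbf{z}$ (with its analogue at $\bar{\mathbf{z}}$), combined with monotonicity of $\mathbf{A}$, yields
\[
2\langle \mathbf{u}, M\mathbf{w}\rangle \geq \langle \mathbf{u}, (2D -N -N^{\top})\mathbf{u}\rangle +2\gamma\langle \mathbf{u}, (P -Q)\mathbf{b}_R +Q\mathbf{b}_P\rangle.
\]
Using $T\mathbf{z} -T\bar{\mathbf{z}} =\mathbf{w} -\theta M^{\top}\mathbf{u}$ and $(\Id -T)\mathbf{z} -(\Id -T)\bar{\mathbf{z}} =\theta M^{\top}\mathbf{u}$, I expand $\|T\mathbf{z} -T\bar{\mathbf{z}}\|^2 =\|\mathbf{w}\|^2 -2\theta\langle M\mathbf{w}, \mathbf{u}\rangle +\theta^2\|M^{\top}\mathbf{u}\|^2$; substituting the monotonicity bound, the $(2D -N -N^{\top})$ pieces cancel against the $\theta\langle \mathbf{u}, (2D -N -N^{\top} -MM^{\top})\mathbf{u}\rangle$ summand of the LHS and the $\theta^2\|M^{\top}\mathbf{u}\|^2$ terms collapse, so that both \eqref{quasinonexpansive} and \eqref{averagedness} reduce to controlling $\gamma\theta\langle \mathbf{u}, (P -Q)\mathbf{b}_R +Q\mathbf{b}_P\rangle$ from below.

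For part~\ref{Lipschitz}, Lemma~\ref{l:fixzeros} forces $\bar{\mathbf{x}} \in \Delta$, so $M^{\top}\bar{\mathbf{x}} =0$ and $R\bar{\mathbf{x}} =P^{\top}\bar{\mathbf{x}}$. Substituting $Q^{\top}\mathbf{u} =P^{\top}\mathbf{u} -KM^{\top}\mathbf{u}$ produces the clean decomposition
\[
\langle \mathbf{u}, (P -Q)\mathbf{b}_R +Q\mathbf{b}_P\rangle =\langle P^{\top}\mathbf{u}, \mathbf{b}_P\rangle +\langle KM^{\top}\mathbf{u}, \mathbf{b}_R -\mathbf{b}_P\rangle,
\]
in which monotonicity of $\mathbf{B}$ kills $\langle P^{\top}\mathbf{u}, \mathbf{b}_P\rangle \geq 0$, while Lipschitz continuity together with $\mathbf{b}_R -\mathbf{b}_P =\mathbf{B}(R\mathbf{x}) -\mathbf{B}(P^{\top}\mathbf{x})$ and $R\mathbf{x} -P^{\top}\mathbf{x} =-UM^{\top}\mathbf{u}$ gives $\|\mathbf{b}_R -\mathbf{b}_P\| \leq \ell\|U\|\|M^{\top}\mathbf{u}\|$. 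Cauchy--Schwarz combined with the AM--GM-type bound $2\|K\|\|U\| \leq \|K\|^2 +\|U\|^2$ then yields
\[
2\langle KM^{\top}\mathbf{u}, \mathbf{b}_R -\mathbf{b}_P\rangle \geq -\ell(\|K\|^2 +\|U\|^2)\|M^{\top}\mathbf{u}\|^2,
\]
which is precisely the deficit absorbed by the coefficient of $\|(\Id -T)\mathbf{z}\|^2$ in \eqref{quasinonexpansive}.

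For part~\ref{cocoercive} with $Q =0$, the cross term becomes $\gamma\langle P^{\top}\mathbf{u}, \mathbf{b}_R\rangle =\gamma\langle R\mathbf{u}, \mathbf{b}_R\rangle +\gamma\langle UM^{\top}\mathbf{u}, \mathbf{b}_R\rangle$. I will verify that the completed-square quantity
\[
\frac{\gamma\ell}{2\theta}\Bigl\|\theta UM^{\top}\mathbf{u} +\frac{2\theta}{\ell}\mathbf{b}_R\Bigr\|^2 =\frac{\gamma\ell\theta}{2}\|UM^{\top}\mathbf{u}\|^2 +2\gamma\theta\langle UM^{\top}\mathbf{u}, \mathbf{b}_R\rangle +\frac{2\gamma\theta}{\ell}\|\mathbf{b}_R\|^2
\]
exactly cancels the $\langle UM^{\top}\mathbf{u}, \mathbf{b}_R\rangle$ contribution; its $\|UM^{\top}\mathbf{u}\|^2$ piece is dominated by $-\frac{\gamma\ell\theta\|U\|^2}{2}\|M^{\top}\mathbf{u}\|^2$ (since $\|UM^{\top}\mathbf{u}\|^2 \leq \|U\|^2\|M^{\top}\mathbf{u}\|^2$); and the residual $-2\gamma\theta\langle R\mathbf{u}, \mathbf{b}_R\rangle +\frac{2\gamma\theta}{\ell}\|\mathbf{b}_R\|^2 \leq 0$ by $\tfrac{1}{\ell}$-cocoercivity of $\mathbf{B}$. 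The conical (quasi)averagedness conclusions in both parts then follow by dropping the nonnegative PSD contribution $\theta\langle \mathbf{u}, (2D -N -N^{\top} -MM^{\top})\mathbf{u}\rangle$ (legitimate by Assumption~\ref{a:stand}\ref{a:stand_neg_semidef}) and matching the coefficient of $\|(\Id -T)\mathbf{z}\|^2$ (respectively $\|(\Id -T)\mathbf{z} -(\Id -T)\bar{\mathbf{z}}\|^2$) with $\frac{1 -\rho}{\rho}$, with the positivity requirement on $\rho$ pinning down the admissible range of $\gamma$. The main obstacle I anticipate is the delicate bookkeeping in part~\ref{cocoercive}, where several $\theta$- and $\gamma$-factors must line up exactly for the completed square to do its job; in part~\ref{Lipschitz} the decisive non-obvious choice is the symmetric bound $2\|K\|\|U\| \leq \|K\|^2 +\|U\|^2$ rather than a weaker product bound.
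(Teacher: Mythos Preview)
Your proposal is correct and follows essentially the same route as the paper: the same master inequality from monotonicity of $\mathbf{A}$, the same polarization identity for $\|T\mathbf{z}-T\bar{\mathbf{z}}\|^2$, the same use of $P^{\top}-R=UM^{\top}$ and $P^{\top}-Q^{\top}=KM^{\top}$, and the same Cauchy--Schwarz/Lipschitz (resp.\ cocoercivity plus completed square) estimates for the single-valued cross term. One small caveat: your bookkeeping in part~\ref{Lipschitz} naturally produces the symmetric quadratic form $\theta\langle \mathbf{x}-\bar{\mathbf{x}},[2D-N-N^{\top}-MM^{\top}](\mathbf{x}-\bar{\mathbf{x}})\rangle$ rather than the asymmetric $\theta\langle \mathbf{x}-\bar{\mathbf{x}},[2D-N-N^{\top}-MM^{\top}]\mathbf{x}\rangle$ displayed in \eqref{quasinonexpansive}; this does not affect the ``In particular'' conclusion (it is precisely the symmetric form that Assumption~\ref{a:stand}\ref{a:stand_neg_semidef} makes nonnegative), but be aware of the discrepancy with the stated inequality.
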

\begin{proof}
Let $\mathbf{z}, \bar{\mathbf{z}} \in \mathcal{H}^{m}$ and set $\mathbf{y} :=  D^{-1}( M \mathbf{z} +  N \mathbf{x} - \gamma (P-Q)\mathbf{B}(R\mathbf{x})-\gamma Q\mathbf{B}(P^{\top}\mathbf{x}))$, $\bar{\mathbf{y}} :=  D^{-1}( M \bar{\mathbf{z}} +  N \bar{\mathbf{x}} - \gamma(P-Q)\mathbf{B}(R\bar{\mathbf{x}})-\gamma Q\mathbf{B}(P^{\top}\bar{\mathbf{x}}))$. Then $\mathbf{x} = J_{\gamma  D^{-1}\mathbf{A}}(\mathbf{y})$ and $\bar{\mathbf{x}} = J_{\gamma D^{-1}\mathbf{A}}(\bar{\mathbf{y}})$, or equivalently, $ D\mathbf{y}- D\mathbf{x} \in \gamma\mathbf{A} \mathbf{x}$ and $ D\bar{\mathbf{y}} -  D\bar{\mathbf{x}} \in \gamma\mathbf{A}\bar{\mathbf{x}}$. The monotonicity of $\mathbf{A} = (A_1, \dots, A_n)$ gives
\begin{align}
0 &\leq \langle \mathbf{x} - \bar{\mathbf{x}},  D(\mathbf{y}-\mathbf{x}) -  D(\bar{\mathbf{y}} - \bar{\mathbf{x}}) \rangle \notag \\ 
&= \langle \mathbf{x} - \bar{\mathbf{x}}, ( M\mathbf{z} +  N\mathbf{x} - \gamma(P-Q)\mathbf{B}(R\mathbf{x}) - \gamma Q\mathbf{B}(P^{\top}\mathbf{x}) -  D\mathbf{x}) \notag \\
&\quad- ( M\bar{\mathbf{z}} +  N\bar{\mathbf{x}}-\gamma(P-Q)\mathbf{B}(R\bar{\mathbf{x}}) -\gamma Q\mathbf{B}(P^{\top}\bar{\mathbf{x}}) -  D\bar{\mathbf{x}}) \rangle  \notag \\
&= \langle  M^{\top}\mathbf{x} -  M^{\top}\bar{\mathbf{x}}, \mathbf{z} - \bar{\mathbf{z}} \rangle + \langle \mathbf{x} - \bar{\mathbf{x}}, ( N -  D)\mathbf{x} - ( N- D)\bar{\mathbf{x}} \rangle \notag \\
&\quad - \gamma \langle \mathbf{x} - \bar{\mathbf{x}}, (P-Q)\mathbf{B}(R\mathbf{x}) + Q\mathbf{B}(P^{\top}\mathbf{x}) - (P-Q)\mathbf{B}(R\bar{\mathbf{x}}) - Q\mathbf{B}(P^{\top}\bar{\mathbf{x}}) \rangle. \label{monotonicity}   
\end{align}
The first term in \eqref{monotonicity} is expressed as
\begin{align}
& \langle  M^{\top}\mathbf{x}- M^{\top}\bar{\mathbf{x}}, \mathbf{z}-\bar{\mathbf{z}} \rangle \notag \\
&= \frac{1}{\theta} \langle (\Id-T)\mathbf{z}-(\Id - T)\bar{\mathbf{z}}, \mathbf{z} - \bar{\mathbf{z}} \rangle \notag \\
&= \frac{1}{2\theta} (\|\mathbf{z} - \bar{\mathbf{z}}\|^2 + \|(\Id-T)\mathbf{z}-(\Id-T)\bar{\mathbf{z}}\|^2 - \|T\mathbf{z}-T\bar{\mathbf{z}}\|^2),
\label{cocoercive_firstterm}
\end{align}
while the second term in \eqref{monotonicity} is written as
\begin{align}
&\langle \mathbf{x} - \bar{\mathbf{x}}, ( N -  D)\mathbf{x} - ( N- D)\bar{\mathbf{x}} \rangle \notag \\
&= \frac{1}{2}\langle \mathbf{x}-\bar{\mathbf{x}}, [ M M^{\top} +2 N - 2 D](\mathbf{x}-\bar{\mathbf{x}})\rangle - \frac{1}{2}\| M^{\top}\mathbf{x} -  M^{\top}\bar{\mathbf{x}}\|^2 \notag \\
&= \frac{1}{2} \langle \mathbf{x}-\bar{\mathbf{x}}, [ M M^{\top}+ N +  N^{\top} - 2 D](\mathbf{x}-\bar{\mathbf{x}})\rangle - \frac{1}{2\theta^2}\|(\Id-T)\mathbf{z}-(\Id-T)\bar{\mathbf{z}}\|^2.
\label{cocoercive_secondterm}
\end{align}

\ref{Lipschitz}: As $\bar{\mathbf{z}}\in \Fix T$, we have from Lemma~\ref{l:fixzeros}\ref{l:fixzeros_T} that $\bar{\mathbf{x}} = (\bar{x}, \dots, \bar{x})\in \Delta$. Then \eqref{cocoercive_firstterm} becomes 
\begin{align}
\label{lipschitz_firstterm}
\frac{1}{2\theta}(\|\mathbf{z}-\bar{\mathbf{z}}\|^2 + \|(\Id-T)\mathbf{z}\|^2 - \|T\mathbf{z}-\bar{\mathbf{z}}\|^2).
\end{align}
Using Assumption~\ref{a:stand}\ref{a:stand_kerM}--\ref{a:stand_N}, \eqref{cocoercive_secondterm} becomes
\begin{align}
\label{lipschitz_secondterm}
&\frac{1}{2} \langle \mathbf{x}-\bar{\mathbf{x}}, [ M M^{\top} +  N +  N^{\top}-2 D]\mathbf{x}\rangle - \frac{1}{2\theta^2}\|(\Id-T)\mathbf{z}\|^2.
\end{align}
The last term in \eqref{monotonicity} can be written as 
\begin{align*}
&-\gamma\langle \mathbf{x}-\bar{\mathbf{x}},  P(\mathbf{B}(R\mathbf{x})-\mathbf{B}(R\bar{\mathbf{x}}))\rangle- \gamma\langle \mathbf{x}-\bar{\mathbf{x}}, Q[(\mathbf{B}(P^{\top}\mathbf{x})-\mathbf{B}(P^{\top}\bar{\mathbf{x}}))-(\mathbf{B}(R\mathbf{x})-\mathbf{B}(R\bar{\mathbf{x}}))]\rangle.
\end{align*}
Using the monotonicity of $B_1, \dots, B_{p}$ and Remark~\ref{r:OnAssumption}\ref{r:OnAssumption_PR}, it follows that
        \begin{align*}
        0&\leq \gamma\langle  P^{\top}(\mathbf{x}-\bar{\mathbf{x}}), (\mathbf{B}(P^{\top}\mathbf{x})-\mathbf{B}(P^{\top}\bar{\mathbf{x}}))\rangle \\
        &= \gamma\langle  P^{\top}(\mathbf{x}-\bar{\mathbf{x}}), (\mathbf{B}(P^{\top}\mathbf{x})-\mathbf{B}(R\mathbf{x}))\rangle + \gamma\langle  P^{\top}(\mathbf{x}-\bar{\mathbf{x}}), \mathbf{B}(R\mathbf{x})-\mathbf{B}(P^{\top}\bar{\mathbf{x}})\rangle \\
        &= \gamma\langle  P^{\top}(\mathbf{x}-\bar{\mathbf{x}}), (\mathbf{B}(P^{\top}\mathbf{x})-\mathbf{B}(R\mathbf{x}))\rangle + \gamma\langle  P^{\top}(\mathbf{x}-\bar{\mathbf{x}}), \mathbf{B}(R\mathbf{x})-\mathbf{B}(R\bar{\mathbf{x}})\rangle
            \end{align*}
    which implies
        \begin{align}
            -\gamma\langle \mathbf{x}-\bar{\mathbf{x}},  P(\mathbf{B}(R\mathbf{x})-\mathbf{B}(R\bar{\mathbf{x}}))\rangle 
            &\leq \gamma\langle  P^{\top}(\mathbf{x}-\bar{\mathbf{x}}), (\mathbf{B}(P^{\top}\mathbf{x})-\mathbf{B}(R\mathbf{x}))\rangle.
        \label{lipschitz_lastterm1}
        \end{align}
    With the help of  Remark~\ref{r:OnAssumption}\ref{r:OnAssumption_PR} and the assumption that $Q^{\top}\bOne=\bOne $, we have 
        \begin{align}
            -\,& \gamma\langle \mathbf{x}-\bar{\mathbf{x}}, Q[(\mathbf{B}(P^{\top}\mathbf{x})-\mathbf{B}(P^{\top}\bar{\mathbf{x}}))-(\mathbf{B}(R\mathbf{x})-\mathbf{B}(R\bar{\mathbf{x}}))]\rangle \notag \\
            &= -\gamma\langle Q^{\top}(\mathbf{x}-\bar{\mathbf{x}}), (\mathbf{B}(P^{\top}\mathbf{x})-\mathbf{B}(P^{\top}\bar{\mathbf{x}}))-(\mathbf{B}(R\mathbf{x})-\mathbf{B}(R\bar{\mathbf{x}}))\rangle \notag \\
            &= -\gamma\langle Q^{\top}(\mathbf{x}-\bar{\mathbf{x}}), \mathbf{B}(P^{\top}\mathbf{x})-\mathbf{B}(R\mathbf{x})\rangle \notag \\
            &= -\gamma\langle Q^{\top}\mathbf{x}- P^{\top}\mathbf{x}, \mathbf{B}(P^{\top}\mathbf{x})-\mathbf{B}(R\mathbf{x})\rangle - \gamma\langle  P^{\top}\mathbf{x}-Q^{\top}\bar{\mathbf{x}}, \mathbf{B}(P^{\top}\mathbf{x})-\mathbf{B}(R\mathbf{x})\rangle  \notag \\
            &= \gamma\langle (P^{\top}-Q^{\top})\mathbf{x}, \mathbf{B}(P^{\top}\mathbf{x})-\mathbf{B}(R\mathbf{x})\rangle - \gamma\langle  P^{\top}(\mathbf{x}-\bar{\mathbf{x}}), \mathbf{B}(P^{\top}\mathbf{x})-\mathbf{B}(R\mathbf{x})\rangle.
        \label{lipschitz_lastterm2}
        \end{align}
    Using Cauchy--Schwarz inequality, $\ell$-Lipschitz continuity of $B_1, \dots, B_{p}$ and Remark~\ref{r:OnAssumption_KU}, we have the estimation 
        \begin{align}
            \gamma\langle (P^{\top}-Q^{\top})\mathbf{x}, \mathbf{B}(P^{\top}\mathbf{x})-\mathbf{B}(R\mathbf{x})\rangle 
            &\leq \frac{\gamma\ell}{2}(\|(P^{\top}-Q^{\top})\mathbf{x}\|^2 + \|(P^{\top}- R)\mathbf{x}\|^2) \notag \\
            &\leq \frac{\gamma\ell}{2}\left(\| K\|^2+\| U\|^2\right)\| M^{\top}\mathbf{x}\|^2 \notag \\
            &= \frac{\gamma\ell}{2\theta^2}\left(\|K\|^2+\|U\|^2\right)\|(\Id-T)\mathbf{z}\|^2.
        \label{lipschitz_lastterm}
        \end{align} 
    Multiplying (\ref{lipschitz_firstterm}), (\ref{lipschitz_secondterm}), (\ref{lipschitz_lastterm1}), (\ref{lipschitz_lastterm2}) by $2\theta$ gives (\ref{quasinonexpansive}), and note that the inner product on the LHS of \eqref{quasinonexpansive} is non-negative followed by Assumption~\ref{a:stand}\ref{a:stand_neg_semidef}. Altogether, $T$ is conically $\rho$-quasiaveraged follows from the observation that $\frac{1-\rho}{\rho}=\frac{1-\theta}{\theta}-\frac{\gamma\ell}{\theta}(\|K\|^2+\|U\|^2)$ as claimed.

\ref{cocoercive}: Since $Q=0$, using the cocoercivity of $B_1,\dots, B_{p}$ and Remark~\ref{r:OnAssumption_KU}, the last term in \eqref{monotonicity} can be estimated as
\begin{align}
-\,&\gamma \langle \mathbf{x} - \bar{\mathbf{x}},  P\mathbf{B}(R\mathbf{x}) -  P\mathbf{B}(R\bar{\mathbf{x}}) \rangle  \notag \\
&= -\gamma \langle  P^{\top}(\mathbf{x}-\bar{\mathbf{x}}), \mathbf{B}(R\mathbf{x})-\mathbf{B}(R\bar{\mathbf{x}})\rangle \notag \\
&= -\gamma\langle (P^{\top}- R)(\mathbf{x} -\bar{\mathbf{x}}), \mathbf{B}(R\mathbf{x})-\mathbf{B}(R\bar{\mathbf{x}})\rangle -\gamma\langle  R\mathbf{x} - R\bar{\mathbf{x}}, \mathbf{B}(R\mathbf{x})-\mathbf{B}(R\bar{\mathbf{x}})\rangle) \notag \\
&\leq \frac{\gamma\ell}{4}\|(P^{\top}- R)(\mathbf{x}-\bar{\mathbf{x}})\|^2 -\frac{\gamma\ell}{4}\left\|(P^{\top} - R)(\mathbf{x} -\bar{\mathbf{x}}) +\frac{2}{\ell}(\mathbf{B}(R\mathbf{x}) -\mathbf{B}(R\bar{\mathbf{x}}))\right\|^2 \notag \\ 
&\qquad +\frac{\gamma}{\ell}\|\mathbf{B}(R\mathbf{x}) - \mathbf{B}(R\bar{\mathbf{x}})\|^2 - \frac{\gamma}{\ell}\| \mathbf{B}(R\mathbf{x}) - \mathbf{B}(R\bar{\mathbf{x}})\|^2 \notag \\
&= \frac{\gamma\ell}{4}\| U M^{\top}(\mathbf{x}-\bar{\mathbf{x}})\|^2 -\frac{\gamma\ell}{4}\left\| U M^{\top}(\mathbf{x} -\bar{\mathbf{x}}) +\frac{2}{\ell}(\mathbf{B}(R\mathbf{x}) -\mathbf{B}(R\bar{\mathbf{x}}))\right\|^2 \notag \\ 
&\leq \frac{\gamma\ell}{4} \| U\|^2\| M^{\top}(\mathbf{x}-\bar{\mathbf{x}})\|^2 -\frac{\gamma\ell}{4}\left\| U M^{\top}(\mathbf{x} -\bar{\mathbf{x}}) +\frac{2}{\ell}(\mathbf{B}(R\mathbf{x}) -\mathbf{B}(R\bar{\mathbf{x}}))\right\|^2 \notag \\
&= \frac{\gamma \ell}{4\theta^2} \|U\|^2\|(\Id-T)\mathbf{z}-(\Id-T)\bar{\mathbf{z}}\|^2 \notag \\
&\qquad -\frac{\gamma\ell}{4\theta^2}\left\| U(\Id-T)\mathbf{z} - U(\Id-T)\bar{\mathbf{z}} +\frac{2\theta}{\ell}(\mathbf{B}(R\mathbf{x}) -\mathbf{B}(R\bar{\mathbf{x}}))\right\|^2.
\label{cocoercive_lastterm}
\end{align}
The inequality \eqref{averagedness} follows from observation when multiplying by $2\theta$ and substituting \eqref{cocoercive_firstterm}, \eqref{cocoercive_secondterm}, \eqref{cocoercive_lastterm} into \eqref{monotonicity}. Note that Assumption~\ref{a:stand}\ref{a:stand_neg_semidef} implies that the inner product on the LHS of \eqref{averagedness} is non-negative. The last conclusion follows from the observation that $\frac{1-\rho}{\rho} = \frac{1-\theta}{\theta} - \frac{\gamma \ell}{2\theta}\|U\|^2$. 
\end{proof}

\begin{remark}[A variant of the key inequalities]
\label{r:variant}
    In the proof of Lemma~\ref{l:nonexpansive}, combining the two quadratic forms in $\mathbf{x}$ from the last lines of \eqref{monotonicity} and \eqref{cocoercive_secondterm} leads to a variant of \eqref{quasinonexpansive} and \eqref{averagedness}. 
    \begin{enumerate}
    \item 
    Using \eqref{lipschitz_lastterm2}, the identities $ R\bar{\mathbf{x}}= P^{\top}\bar{\mathbf{x}}=Q^{\top}\bar{\mathbf{x}} = \bar{\mathbf{x}}$, together with Cauchy--Schwarz inequality, and the $\ell$-Lipschitz continuity of $B_1, \dots, B_{p}$, we obtain the estimation
    \begin{align*}
        -\,& \gamma \langle \mathbf{x} - \bar{\mathbf{x}}, (P-Q)\mathbf{B}(R\mathbf{x}) + Q\mathbf{B}(P^{\top}\mathbf{x}) - (P-Q)\mathbf{B}(R\bar{\mathbf{x}}) - Q\mathbf{B}(P^{\top}\bar{\mathbf{x}}) \rangle \notag \\
        &\leq \gamma\langle (P^{\top}-Q^{\top})(\mathbf{x}-\bar{\mathbf{x}}), \mathbf{B}(P^{\top}\mathbf{x})-\mathbf{B}(R\mathbf{x})\rangle. \notag \\
        & \leq \frac{\gamma\ell}{2}(\|(P^{\top}-Q^{\top})(\mathbf{x}-\bar{\mathbf{x}})\|^2 + \|(P^{\top}- R)(\mathbf{x}-\bar{\mathbf{x}})\|^2) \notag \\
        &= \frac{\gamma\ell}{2}\langle \mathbf{x}-\bar{\mathbf{x}}, [(P-Q)(P^{\top}-Q^{\top}) + (P- R^{\top})(P^{\top}- R)](\mathbf{x}-\bar{\mathbf{x}})\rangle. 
    \end{align*}
    The inequality \eqref{quasinonexpansive} then transforms into
    \begin{align}
    \label{new3.2a}
    \| T\mathbf{z}-\bar{\mathbf{z}}\|^2 + \frac{1-\theta}{\theta}\|(\Id-T)\mathbf{z}\|^2
    + \theta\langle \mathbf{x}-\bar{\mathbf{x}},L\mathbf{x}\rangle \leq \|\mathbf{z}-\bar{\mathbf{z}}\|^2,
    \end{align}
    where $L :=2 D-  N -  N^{\top}- M M^{\top} - \gamma\ell((P-Q)(P^{\top}-Q^{\top}) + (P- R^{\top})(P^{\top}- R))$. We deduce that $T$ is conically $\theta$-quasiaveraged if
    \begin{align}\label{eq:semidef_L}
    L =2 D-  N -  N^{\top}- M M^{\top} - \gamma\ell((P-Q)(P^{\top}-Q^{\top}) + (P- R^{\top})(P^{\top}- R)) \succeq 0.
    \end{align}
    \item 
    Since $Q=0$, using the cocoercivity of $B_1,\dots,B_p$, the last term in \eqref{monotonicity} can be estimated as
    \begin{align*}
        -\,&\gamma \langle \mathbf{x} - \bar{\mathbf{x}},  P\mathbf{B}(R\mathbf{x}) -  P\mathbf{B}(R\bar{\mathbf{x}}) \rangle  \notag \\
        &\leq \frac{\gamma\ell}{4}\|(P^{\top}- R)(\mathbf{x}-\bar{\mathbf{x}})\|^2 -\frac{\gamma\ell}{4}\left\|(P^{\top} - R)(\mathbf{x} -\bar{\mathbf{x}}) +\frac{2}{\ell}(\mathbf{B}(R\mathbf{x}) -\mathbf{B}(R\bar{\mathbf{x}}))\right\|^2 \notag \\
        &= \frac{\gamma\ell}{4}\langle \mathbf{x}-\bar{\mathbf{x}}, (P- R^{\top})(P^{\top}- R)(\mathbf{x}-\bar{\mathbf{x}})\rangle \notag \\
        &\quad - \frac{\gamma\ell}{4}\left\|(P^{\top} - R)(\mathbf{x} -\bar{\mathbf{x}}) +\frac{2}{\ell}(\mathbf{B}(R\mathbf{x}) -\mathbf{B}(R\bar{\mathbf{x}}))\right\|^2.
    \end{align*}
    We obtain a variant of \eqref{averagedness} as
    \begin{align}
    \label{new3.2b}
    &\| T\mathbf{z} - T\bar{\mathbf{z}} \|^2 + \frac{1 - \theta}{\theta} \| (\Id - T)\mathbf{z} - (\Id - T)\bar{\mathbf{z}} \|^2 \notag \\
    &+\frac{\theta\gamma\ell}{2}\left\|(P^{\top} - R)(\mathbf{x} -\bar{\mathbf{x}}) +\frac{2}{\ell}(\mathbf{B}(R\mathbf{x}) -\mathbf{B}(R\bar{\mathbf{x}}))\right\|^2 \notag \\
    &+\theta \langle \mathbf{x} - \bar{\mathbf{x}}, [2 D-  N -  N^{\top} -  M  M^{\top} -\frac{\gamma\ell}{2}(P- R^{\top})(P^{\top}- R)] (\mathbf{x} - \bar{\mathbf{x}}) \rangle \leq \| \mathbf{z} - \bar{\mathbf{z}} \|^2. 
    \end{align}
    Therefore, $T$ is conically $\theta$-averaged if
    \begin{align}\label{eq:semidef_C}
    2 D-  N -  N^{\top} -  M  M^{\top} -\frac{\gamma\ell}{2}(P- R^{\top})(P^{\top}- R) \succeq 0.
    \end{align}
    The latter condition coincides with \cite[Assumption~4.7]{ACGN25} when $\gamma=1$ and the cocoercive constants of $B_1,\dots,B_p$ are all equal to $\frac{1}{\ell}$.
    \end{enumerate}
    
    The aforementioned approach improves the conically quasiaveraged (resp., averaged) constant of $T$ to $\theta$ and removes the explicit upper bound on $\gamma$, at the cost of requiring the positive semidefinite condition \eqref{eq:semidef_L} (resp., \eqref{eq:semidef_C}), which involves $\gamma$ and is more restrictive than Assumption~\ref{a:stand}\ref{a:stand_neg_semidef}. As we will see in Section~\ref{s:graphs}, Assumption~\ref{a:stand}\ref{a:stand_neg_semidef} is automatically satisfied while \eqref{eq:semidef_L} and \eqref{eq:semidef_C} might not be. This is especially evident in Remark~\ref{r:smallcases}, the \emph{weighted sequential forward-Douglas--Rachford algorithm} in Example~\ref{eg:ring_seq}, Example~\ref{eg:star}, Remark~\ref{r:ps}, and \emph{Case~1} of Example~\ref{eg:complete}, provided that the underlying graphs have the same weights. In all these cases, $2D-N-N^{\top}-MM^{\top} = 0$ yet $P-R^{\top} \neq 0$, so \eqref{eq:semidef_L} and \eqref{eq:semidef_C} fail, whereas Assumption~\ref{a:stand}\ref{a:stand_neg_semidef} holds.
\end{remark}

\begin{lemma}
\label{l:shadow}
Suppose that $A_1$, \dots, $A_n$ are maximally monotone, that $B_1$, \dots, $B_p$ are monotone and $\ell$-Lipschitz continuous, that Assumption~\ref{a:stand}\ref{a:stand_kerM}--\ref{a:stand_R} holds, and that the first rows of $N$, $P$, and $Q$ are zero. Let $(\mathbf{z}^k)_{k\in \mathbb{N}}$ and $(\mathbf{x}^k)_{k\in \mathbb{N}}$ be the sequences generated by Algorithm~\ref{algo:full}. Suppose further that $(\mathbf{z}^k)_{k\in \mathbb{N}}$ is Fej\'er monotone with respect to $\Fix T$ and that $(\Id -T)\mathbf{z}^k\to 0$ as $k\to +\infty$. Then, as $k\to +\infty$, the following hold:
\begin{enumerate}
\item\label{l:shadow_scvg}
$\sum_{i=1}^n s_ix^k_i\to 0$ whenever $(s_1, \dots, s_n)\in \mathbb{R}^n$ with $\sum_{i=1}^n s_i =0$.
\item\label{l:shadow_wcvg}
$\mathbf{z}^k\rightharpoonup \bar{\mathbf{z}}\in \Fix T$ and $\mathbf{x}^k \rightharpoonup (\bar{x}, \dots, \bar{x}) \in \mathcal{H}^n$ with $\bar{x} \in \zer(\sum_{i=1}^n A_i + \sum_{j=1}^{p} B_j)$.
\end{enumerate}
\end{lemma}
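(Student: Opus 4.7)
For part (i), I would observe that $\sum_{i=1}^n s_i = 0$ is exactly $s \perp \bOne$, so by Assumption~\ref{a:stand}\ref{a:stand_kerM} and Lemma~\ref{l:ran_ker}, $s \in (\ker M^\top)^\perp = \ran M$; write $s = Mv$. Remark~\ref{r:OnAlgo}\ref{r:OnAlgo_xz} then yields $\sum_i s_i x_i^k = \theta^{-1}(v^\top \otimes \Id)(\Id - T)\mathbf{z}^k \to 0$ strongly.

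For part (ii), the plan is to apply Opial's lemma to the Fej\'er monotone sequence $(\mathbf{z}^k)$; this reduces to checking that every weak cluster point of $(\mathbf{z}^k)$ belongs to $\Fix T$. Fej\'er monotonicity gives boundedness of $(\mathbf{z}^k)$. Because the first rows of $N$, $P$, $Q$ vanish, the resolvent expression $x_1^k = J_{(\gamma/\delta_1)A_1}(\delta_1^{-1}\sum_j M_{1j} z_j^k)$ is bounded, and applying part (i) with $s = e_i - e_1$ gives $x_i^k - x_1^k \to 0$ strongly, so $\mathbf{x}^k$ is bounded; consequently $\mathbf{w}^k := \gamma^{-1}[M\mathbf{z}^k + (N-D)\mathbf{x}^k] - (P-Q)\mathbf{B}(R\mathbf{x}^k) - Q\mathbf{B}(P^\top\mathbf{x}^k) \in \mathbf{A}\mathbf{x}^k$ is bounded (via $\ell$-Lipschitz continuity of $B_j$). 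Extract a subsequence $(k_l)$ so that $\mathbf{z}^{k_l} \rightharpoonup \bar{\mathbf{z}}$, $x_1^{k_l} \rightharpoonup \tilde x$, $\mathbf{w}^{k_l} \rightharpoonup \bar{\mathbf{w}}$, and $B_j x_1^{k_l} \rightharpoonup \bar b_j$ for each $j$; part (i) then forces $\mathbf{x}^{k_l} \rightharpoonup \bar{\mathbf{x}} := (\tilde x,\dots,\tilde x) \in \Delta$.

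The crux is the cancellation identity $\langle \mathbf{x}^k, \mathbf{w}^k\rangle + \sum_j \langle x_1^k, B_j x_1^k\rangle \to 0$, which I would verify by expanding the inner product and exploiting (a) $\langle M\mathbf{z}^k, \mathbf{x}^k\rangle = \langle \mathbf{z}^k, M^\top\mathbf{x}^k\rangle \to 0$ from $M^\top\mathbf{x}^k = \theta^{-1}(\Id-T)\mathbf{z}^k \to 0$; (b) $\bOne^\top(N-D)\bOne = 0$ from Assumption~\ref{a:stand}\ref{a:stand_N} together with $\mathbf{x}^k = (x_1^k,\dots,x_1^k) + o(1)$ strongly; (c) the strong convergences $(R\mathbf{x}^k)_j - x_1^k \to 0$ and $(P^\top\mathbf{x}^k)_j - x_1^k \to 0$ combined with Lipschitz continuity of $B_j$ and $\sum_i P_{ij} = 1$, which collapse the two $\mathbf{B}$-terms into $\sum_j\langle x_1^k, B_jx_1^k\rangle + o(1)$. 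The same manipulations applied to $\sum_i w_i^k$ give the strong relation $\sum_i w_i^k + \sum_j B_j x_1^k \to 0$, so $\sum_i \bar w_i + \sum_j \bar b_j = 0$. Monotonicity of $\mathbf{A}$ yields $\liminf \langle \mathbf{x}^{k_l}, \mathbf{w}^{k_l}\rangle \geq \langle \bar{\mathbf{x}}, \bar{\mathbf{w}}\rangle$ and monotonicity of each $B_j$ yields $\liminf \langle x_1^{k_l}, B_j x_1^{k_l}\rangle \geq \langle \tilde x, \bar b_j\rangle$; adding these lower bounds sums to $0$, so by the cancellation all liminfs are in fact limits attaining equality. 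Brezis's characterization of maximally monotone operators then gives $\bar{\mathbf{w}} \in \mathbf{A}\bar{\mathbf{x}}$ (so $\bar w_i \in A_i \tilde x$) and $\bar b_j = B_j \tilde x$ for each $j$.

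Knowing $\bar b_j = B_j\tilde x$, I can pass the weak limit in each component of the definition of $w_i^k$ to obtain $\gamma \bar w_i + \gamma \sum_j P_{ij} B_j \tilde x + (\delta_i - \sum_j N_{ij})\tilde x = (M\bar{\mathbf{z}})_i$, which is exactly the $i$-th component of the inclusion $M\bar{\mathbf{z}} \in \Psi(\bar{\mathbf{x}})$ defining $\bar{\mathbf{x}} = S\bar{\mathbf{z}}$; summing over $i$ and using $\sum_i P_{ij} = 1$ together with Assumption~\ref{a:stand}\ref{a:stand_N} produces $\sum_i \bar w_i + \sum_j B_j \tilde x = 0$, so $\tilde x \in \zer(\sum_i A_i + \sum_j B_j)$. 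Since $\bar{\mathbf{x}} \in \Delta$, Remark~\ref{r:OnAssumption}\ref{r:OnAssumption_kerM} gives $M^\top \bar{\mathbf{x}} = 0$, hence $T\bar{\mathbf{z}} = \bar{\mathbf{z}}$, and Opial's lemma concludes $\mathbf{z}^k \rightharpoonup \bar{\mathbf{z}} \in \Fix T$. For the $\mathbf{x}^k$ convergence, specializing the displayed identity to $i=1$ gives $\delta_1 \tilde x + \gamma A_1 \tilde x \ni (M\bar{\mathbf{z}})_1$, uniquely determining $\tilde x = J_{(\gamma/\delta_1)A_1}((M\bar{\mathbf{z}})_1/\delta_1) =: \bar x$ independently of the subsequence, so $x_1^k \rightharpoonup \bar x$; part (i) then propagates this to $\mathbf{x}^k \rightharpoonup (\bar x,\dots,\bar x)$. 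The main obstacle is exactly the simultaneous Brezis $\limsup$ closure for $\mathbf{A}$ and the $B_j$'s: absent cocoercivity, there is no weak-to-weak continuity to promote $B_j x_1^{k_l} \rightharpoonup \bar b_j$ into $\bar b_j = B_j \tilde x$ for free, and the cancellation identity is precisely what supplies the needed $\limsup$ bound.
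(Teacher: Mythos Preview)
Your argument for part~(i) is identical to the paper's. For part~(ii), your approach is correct but genuinely different from the paper's route.

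The paper packages everything into a single auxiliary maximally monotone operator
\[
\mathcal{S}=\diag\big((A_1+F_1)^{-1},\dots,(A_{n-1}+F_{n-1})^{-1},A_n+F_n\big)+\text{(skew block)},
\]
where $F_i x=\sum_j P_{ij}B_j x$, and rewrites the iterates as an inclusion in $\gra\mathcal{S}$ whose \emph{value} side (the vector of $x_i^k-x_n^k$ together with a telescoping sum) converges \emph{strongly} to~$0$ by part~(i), while the \emph{argument} side converges only weakly. A single appeal to weak--strong sequential closedness of $\gra\mathcal{S}$ then gives the limit inclusion, from which $\bar{\mathbf{x}}=S\bar{\mathbf{z}}$ and $\bar{\mathbf{z}}\in\Fix T$ follow. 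This avoids any inner-product bookkeeping and any use of the Brezis $\limsup$ criterion.

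Your route keeps $\mathbf{A}$ and the $B_j$'s separate, proves the cancellation identity $\langle\mathbf{x}^k,\mathbf{w}^k\rangle+\sum_j\langle x_1^k,B_jx_1^k\rangle\to 0$ together with the strong relation $\sum_i w_i^k+\sum_j B_jx_1^k\to 0$, and then runs a sum-of-liminfs argument to force the Brezis $\limsup$ condition for $\mathbf{A}$ and for each $B_j$ simultaneously. This is valid, but one attribution should be sharpened: the inequalities $\liminf\langle\mathbf{x}^{k_l},\mathbf{w}^{k_l}\rangle\ge\langle\bar{\mathbf{x}},\bar{\mathbf{w}}\rangle$ and $\liminf\langle x_1^{k_l},B_jx_1^{k_l}\rangle\ge\langle\tilde x,\bar b_j\rangle$ do \emph{not} follow from mere monotonicity---they hold because the operators are \emph{maximally} monotone (indeed, if the liminf were strictly smaller, a subsequence would satisfy the Brezis $\limsup$ hypothesis, forcing the weak limit pair into the graph, after which monotonicity yields the reverse inequality, a contradiction). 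Since $\mathbf{A}$ and each $B_j$ are maximally monotone here, this is not a gap, only a mislabelled justification.

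What each approach buys: the paper's construction of $\mathcal{S}$ is structural and needs only the elementary weak--strong closedness; your argument avoids inventing $\mathcal{S}$ and is more direct at the level of the original operators, but trades this for the somewhat less standard weak--weak Brezis criterion and the liminf fact above.
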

\begin{proof}
\ref{l:shadow_scvg}: By Lemma~\ref{l:ran_ker} and Assumption~\ref{a:stand}\ref{a:stand_kerM},
\begin{align*}
\ran M =(\ker M^{\top})^\perp =\left\{(s_1, \dots, s_n)^\top\in \mathbb{R}^n: \sum_{i=1}^n s_i = 0\right\}.
\end{align*}
Since $(\Id -T)\mathbf{z}^k\to 0$ as $k\to +\infty$, the conclusion then follows from Remark~\ref{r:OnAlgo}\ref{r:OnAlgo_xz}.    

\ref{l:shadow_wcvg}: First, we claim that the sequence $(\mathbf{x}^k)_{k\in \mathbb{N}}$ is bounded. As the first rows of $N$, $P$, and $Q$ are zero, we have $x^k_1 =J_{\frac{\gamma}{\delta_1}A_1}\Big(\frac{1}{\delta_1}\sum_{j=1}^m M_{1j}z^k_j\Big)$ and $x^0_1 =J_{\frac{\gamma}{\delta_1}A_1}\Big(\frac{1}{\delta_1}\sum_{j=1}^m M_{1j}z^0_j\Big)$. By the nonexpansiveness of $J_{\frac{\gamma}{\delta_1}A_1}$, $\|x^k_1 -x^0_1\|\leq \|\frac{1}{\delta_1}\sum_{j=1}^m M_{1j}z^k_j -\frac{1}{\delta_1}\sum_{j=1}^m M_{1j}z^0_j\|$. Since $(\mathbf{z}^k)_{k\in \mathbb{N}}$ is Fej\'er monotone, by \cite[Proposition~5.4(i)]{BC17}, for each $j\in \{1, \dots, n\}$, $(z^k_j)_{k\in \mathbb{N}}$ is bounded, so is $(x_1^k)_{k\in \mathbb{N}}$. Now, by \ref{l:shadow_scvg}, for each $i\in \{2, \dots, n\}$, since $x^k_1 -x^k_i\to 0$ as $k\to +\infty$, the sequence $(x^k_i)_{k\in \mathbb{N}}$ is also bounded. We obtain the boundedness of $(\mathbf{x}^k)_{k\in \mathbb{N}}$.

For each $i\in \{1, \dots, n\}$, we define for $\mathbf{x} =(x_1, \dots, x_n)\in \mathcal{H}^n$,
\begin{align*}
\mathbf{F}_i\mathbf{x} &:=\left((P-Q)\mathbf{B}(R\mathbf{x})+Q\mathbf{B}(P^{\top}\mathbf{x})\right)_i \\
&\ =\sum_{j=1}^{p}\left((P_{ij}-Q_{ij})B_j\left(\sum_{l=1}^n R_{jl}{x_l}\right)+Q_{ij}B_j\left(\sum_{l=1}^n P_{lj}x_l\right)\right)
\end{align*}
and for $x\in \mathcal{H}$, $F_i x :=\mathbf{F}_i \mathbf{x}$ with $\mathbf{x} =(x, \dots, x)\in \Delta$. In view of Remark~\ref{r:OnAssumption}\ref{r:OnAssumption_PR},
\begin{align*}
F_i x =\left(P\mathbf{B}\mathbf{x}\right)_i =\sum_{j=1}^p P_{ij}B_j x.    
\end{align*}
For all $k\in \mathbb{N}$, let $\mathbf{y}^k := D^{-1}( M\mathbf{z}^k +  N \mathbf{x}^k)$. On the one hand, $\mathbf{x}^k = J_{\gamma D^{-1} \mathbf{A}}(\mathbf{y}^k - \gamma D^{-1}((P-Q)\mathbf{B}(R\mathbf{x}^k)-\gamma Q\mathbf{B}(P^{\top}\mathbf{x}^k)))$, and so $\frac{1}{\gamma} D(\mathbf{y}^k -\mathbf{x}^k) -(P-Q)\mathbf{B}(R\mathbf{x}^k)-Q\mathbf{B}(P^{\top}\mathbf{x}^k)\in \mathbf{A}\mathbf{x}^k$, which means that, for all $i\in \{1, \dots, n\}$, $\frac{\delta_i}{\gamma}(y_i^k -x_i^k)-\mathbf{F}_i\mathbf{x}^k\in A_i x_i^k$. Therefore,
\begin{align}\label{eq:inclu}
\begin{bmatrix}
x_1^k -x_n^k \\
x_2^k -x_n^k \\
\vdots \\
x_{n-1}^k -x_n^k \\
\sum_{i=1}^n \frac{\delta_i}{\gamma}(y_i^k -x_i^k) - \sum_{i=1}^n (\mathbf{F}_i\mathbf{x}^k -F_i x_i^k)
\end{bmatrix}
\in
\mathcal{S}\left(\begin{bmatrix}
\frac{\delta_1}{\gamma}(y_1^k -x_1^k)-\mathbf{F}_1\mathbf{x}^k + F_1 x_1^k \\
\frac{\delta_2}{\gamma}(y_2^k -x_2^k)-\mathbf{F}_2\mathbf{x}^k + F_2 x_2^k \\
\vdots \\
\frac{\delta_{n-1}}{\gamma}(y_{n-1}^k -x_{n-1}^k)- \mathbf{F}_{n-1}\mathbf{x}^k +F_{n-1} x_{n-1}^k \\
x_n^k
\end{bmatrix}\right) 
\end{align}
where $\mathcal{S}\colon \mathcal{H}^n \rightrightarrows \mathcal{H}^n$ is the operator given by
\begin{align*}
\mathcal{S} := \diag\left(\begin{bmatrix}
(A_1 + F_1)^{-1} \\
(A_2 + F_2)^{-1} \\
\vdots \\
(A_{n-1} + F_{n-1})^{-1} \\
(A_n + F_n)
\end{bmatrix} \right)
+\begin{bmatrix}
0 & 0 & \dots & 0 & -\Id \\
0 & 0 & \dots & 0 & -\Id \\
\vdots & \ddots & \vdots & \vdots & \vdots\\
0 & 0 & \dots & 0 & -\Id \\
\Id & \Id & \dots & \Id & 0
\end{bmatrix}.    
\end{align*}
Since $B_1, \dots, B_p\colon \mathcal{H}\to \mathcal{H}$ are monotone and Lipschitz continuous, they are maximally monotone operators with full domain, and so are $F_1$, \dots, $F_n$. According to \cite[Corollary 25.5(i)]{BC17}, $\mathcal{S}$ is a maximally monotone operator.

On the other hand, it follows from the definition of $\mathbf{y}^k$ that
\begin{align*}
\frac{1}{\gamma} D\mathbf{y}^k - \frac{1}{\gamma} N \mathbf{x}^k = \frac{1}{\gamma} M\mathbf{z}^k\in \ran(M\otimes \Id) =\Delta^\perp, 
\end{align*}
which yields  
\begin{align*}
\frac{1}{\gamma}\sum_{i=1}^n \delta_iy_i^k -\frac{1}{\gamma}\sum_{i=1}^n N_{ij}x_i^k =0,
\end{align*}
and thus
\begin{align}\label{eq:nth}
\sum_{i=1}^n \frac{\delta_i}{\gamma}(y_i^k -x_i^k) -\sum_{i=1}^n (\mathbf{F}_i \mathbf{x}^k -F_i x^k_i) =\frac{1}{\gamma}\sum_{i=1}^n \left(\sum_{j=1}^n N_{ij} -\delta_i\right)x_i^k -\sum_{i=1}^n \left(\mathbf{F}_i \mathbf{x}^k -F_i x^k_i\right).    
\end{align}
We observe that, for each $i\in \{1, \dots, n\}$,
\begin{multline*}
\mathbf{F}_i \mathbf{x}^k -F_i x^k_i =\sum_{j=1}^{p} (P_{ij} -Q_{ij})\left(B_j\left(\sum_{l=1}^n R_{jl}x^k_l\right) -B_j\left(\sum_{l=1}^n R_{jl}x^k_i\right)\right) \\
+\sum_{j=1}^{p} Q_{ij}\left(B_j\left(\sum_{l=1}^n P_{lj}x^k_l\right) -B_j\left(\sum_{l=1}^n P_{lj}x^k_i\right)\right).    
\end{multline*}
In view of \ref{l:shadow_scvg}, as $k\to +\infty$, for any $l, i\in \{1, \dots, n\}$, 
\begin{align}\label{eq:xlxi}
x^k_l -x^k_i\to 0,    
\end{align}
and so
\begin{align*}
\left\|\sum_{l=1}^n R_{jl}x^k_l -\sum_{l=1}^n R_{jl}x^k_i\right\|\to 0 \text{~~and~~} \left\|\sum_{l=1}^n P_{lj}x^k_l -\sum_{l=1}^n P_{lj}x^k_i\right\|\to 0,   
\end{align*}
which together with the Lipschitz continuity of $B_1$, \dots, $B_p$ imply that
\begin{align}
\label{eq:Fi}
\mathbf{F}_i \mathbf{x}^k -F_i x^k_i \to 0.
\end{align}
By combining this with \eqref{eq:nth}, Assumption~\ref{a:stand}\ref{a:stand_N}, and again \ref{l:shadow_scvg}, we deduce that
\begin{align}\label{eq:sumto0}
\sum_{i=1}^n \frac{\delta_i}{\gamma}(y_i^k -x_i^k) - \sum_{i=1}^n (\mathbf{F}_i \mathbf{x}^k  - F_i x^k_i)\to 0 \text{~~as~~} k\to +\infty.   
\end{align}
    
Now, let $\bar{\mathbf{z}} =(\bar{z}_1, \dots, \bar{z}_m)\in \mathcal{H}^m$ be an arbitrary weak cluster point of $(\mathbf{z}^k)_{k\in \mathbb{N}}$. By \eqref{eq:xlxi}, there exists $\bar{\mathbf{x}} = (\bar{x}, \dots, \bar{x})\in \Delta$ such that $(\bar{\mathbf{z}}, \bar{\mathbf{x}})$ is a weak cluster point of $(\mathbf{z}^k, \mathbf{x}^k)_{k\in \mathbb{N}}$. Denote $\bar{\mathbf{y}} = D^{-1}( M\bar{\mathbf{z}} + N\bar{\mathbf{x}})$. As the graph of a maximally monotone operator is sequentially closed in the weak-strong topology \cite[Proposition~20.38(ii)]{BC17}, in view of \eqref{eq:xlxi}, \eqref{eq:Fi}, and \eqref{eq:sumto0}, taking the limit in \eqref{eq:inclu} along a subsequence of $(\mathbf{x}^k)_{k\in \mathbb{N}}$ which converges weakly to $\bar{\mathbf{x}}$ yields
\begin{align*}
\begin{bmatrix}
0 \\
0 \\
\vdots \\
0 \\
0
\end{bmatrix}
\in \mathcal{S}\left(\begin{bmatrix}
\frac{\delta_1}{\gamma}(\bar{y}_1 -\bar{x})\\
\frac{\delta_2}{\gamma}(\bar{y}_2 -\bar{x})\\
\vdots \\
\frac{\delta_{n-1}}{\gamma}(\bar{y}_{n-1} -\bar{x})\\
\bar{x}
\end{bmatrix}\right),
\end{align*}
which is equivalent to
\begin{align}\label{eq:limit}
\begin{aligned}
\frac{\delta_i}{\gamma}(\bar{y}_i -\bar{x})- F_i\bar{x} &\in A_i \bar{x} , \quad i\in \{1, \dots, n-1\} \\
\frac{\delta_n}{\gamma}(\bar{y}_n -\bar{x})- F_n \bar{x} =-\sum_{i=1}^{n-1} \frac{\delta_i}{\gamma}(\bar{y}_i -\bar{x})- F_n \bar{x} &\in A_n \bar{x}.    
\end{aligned}    
\end{align}
It follows that
\begin{align*}
\bar{x} = J_{\frac{\delta_i}{\gamma}A_i}\left(\bar{y}_i - \frac{\gamma}{\delta_i}F_i\bar{x}\right)= J_{\frac{\delta_i}{\gamma}A_i}\left(\bar{y}_i - \frac{\gamma}{\delta_i}\mathbf{F}_i\bar{\mathbf{x}}\right), \quad i\in\{1,\dots,n\},
\end{align*}
from which we have $\bar{\mathbf{x}} =S\bar{\mathbf{z}}$, and so $T\bar{\mathbf{z}} =\bar{\mathbf{z}} -\theta M^{\top}\bar{\mathbf{x}} =\bar{\mathbf{z}}$, which means $\bar{\mathbf{z}}\in \Fix T$. By \cite[Theorem~5.5]{BC17}, $(\mathbf{z}^k)_{k\in \mathbb{N}}$ converges weakly to $\bar{\mathbf{z}}$. 

Finally, let $\bar{\mathbf{x}}\in \mathcal{H}^n$ be an arbitrary weak cluster point of $(\mathbf{x}^k)_{k\in \mathbb{N}}$. Then $(\bar{\mathbf{z}}, \bar{\mathbf{x}})$ is a weak cluster point of $(\mathbf{z}^k, \mathbf{x}^k)_{k\in \mathbb{N}}$ and, by \eqref{eq:xlxi}, $\bar{\mathbf{x}} = (\bar{x}, \dots, \bar{x})\in \Delta$ for some $\bar{x} \in \mathcal{H}$. By the same argument as above, we also obtain \eqref{eq:limit}, which implies that $\bar{x}\in \zer(\sum_{i=1}^n A_i +\sum_{j=1}^p B_j)$ (since $\sum_{i=1}^{n} F_i \bar{x} =\sum_{i=1}^{n}\sum_{j=1}^p P_{ij}B_j \bar{x} =\sum_{j=1}^p B_j \bar{x}$ due to Assumption~\ref{a:stand}\ref{a:stand_P}) and also $\bar{x} =J_{\frac{\gamma}{\delta_1} A_1}(\bar{y}_1) =J_{\frac{\gamma}{\delta_1} A_1}(\frac{1}{\delta_1} \sum_{j=1}^m M_{1j}\bar{z}_j)$. As the latter does not depend on the choice of cluster points, $\mathbf{x}^k\rightharpoonup \bar{\mathbf{x}} =(\bar{x}, \dots, \bar{x})$, and the proof is complete.
\end{proof}

Our main results on the convergence of Algorithm~\ref{algo:full} are presented in the following theorems.

\begin{theorem}[Convergence without cocoercivity]
\label{t:cvg_wco}
Suppose that $A_1$, \dots, $A_n$ are maximally monotone, that $B_1, \dots, B_{p}$ are monotone and $\ell$-Lipschitz continuous, that $\zer(\sum_{i=1}^n A_i + \sum_{j=1}^{p} B_j) \ne \varnothing$, that Assumption~\ref{a:stand} holds, and that $Q^{\top}\bOne = \bOne$. Let $\tau =\|(P^{\top}-Q^{\top})(M^{\top})^\dag\|^2+\|(P^{\top}-R)(M^{\top})^\dag\|^2$ and let \( (\mathbf{z}^k)_{k\in\mathbb{N}}\) and \( (\mathbf{x}^k)_{k\in\mathbb{N}}\) be the sequences generated by Algorithm~\ref{algo:full} with $\gamma \in \left(0, \frac{1}{\ell\tau}\right)$ and $(\lambda_k)_{k\in\mathbb{N}}$ in $[0,1-\gamma\ell\tau]$ satisfying $\liminf_{k\rightarrow +\infty} \lambda_k(1-\frac{\lambda_k}{1-\gamma\ell\tau}) > 0$. Then, as $k\to +\infty$, the following hold:
\begin{enumerate}
\item\label{t:cvg_wco_zTz} 
$(\Id -T)\mathbf{z}^k\to 0$ and $\sum_{i=1}^n s_ix^k_i\to 0$ whenever $(s_1, \dots, s_n)\in \mathbb{R}^n$ with $\sum_{i=1}^n s_i =0$. Moreover, $\|\frac{1}{k+1}\sum_{t=0}^k (\Id -T)\mathbf{z}^t\| = O(\frac{1}{\sqrt{k}})$.
\item\label{t:cvg_wco_x}
\( \mathbf{z}^k \rightharpoonup \bar{\mathbf{z}} \in \Fix T \) and \( \mathbf{x}^k \rightharpoonup (\bar{x}, \dots, \bar{x}) \in \mathcal{H}^n \) with \( \bar{x} \in \zer(\sum_{i=1}^n A_i + \sum_{j=1}^{p} B_j) \) provided that the first rows of $N$, $P$, and $Q$ are zero.
\end{enumerate}
\end{theorem}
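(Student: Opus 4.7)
My plan is that the theorem is essentially a consolidation of the preparatory results, so the proof should consist of verifying the hypotheses of Proposition~\ref{p:KM} and then invoking Lemma~\ref{l:shadow}. First, since $\zer\bigl(\sum_{i=1}^n A_i + \sum_{j=1}^p B_j\bigr) \neq \varnothing$, Lemma~\ref{l:fixzeros} yields $\Fix T \neq \varnothing$. Because Assumption~\ref{a:stand} holds in full and $Q^\top \bOne = \bOne$, Lemma~\ref{l:nonexpansive}\ref{Lipschitz} applies: writing $K = (P^\top - Q^\top)(M^\top)^\dag$ and $U = (P^\top - R)(M^\top)^\dag$, so that $\tau = \|K\|^2 + \|U\|^2$, the range $\gamma \in (0, 1/(\ell\tau))$ makes $T$ conically $\rho$-quasiaveraged with $\rho = \theta/(1-\gamma\ell\tau) > 0$ (for any fixed $\theta > 0$ used in the definition of $T$ via Remark~\ref{r:OnAlgo}\ref{r:OnAlgo_operator}).

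For part~\ref{t:cvg_wco_zTz}, I will recast Algorithm~\ref{algo:full} in Krasnosel'ski\u{\i}--Mann form using Remark~\ref{r:OnAlgo}\ref{r:OnAlgo_operator}: $\mathbf{z}^{k+1} = (1 - \zeta_k)\mathbf{z}^k + \zeta_k T\mathbf{z}^k$ with $\zeta_k := \lambda_k/\theta$. The condition $\lambda_k \in [0, 1 - \gamma\ell\tau]$ translates to $\zeta_k \in [0, 1/\rho]$, and the hypothesis $\liminf_k \lambda_k\bigl(1 - \lambda_k/(1-\gamma\ell\tau)\bigr) > 0$ translates to $\liminf_k \zeta_k(1 - \rho\zeta_k) > 0$. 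Proposition~\ref{p:KM} then delivers Fej\'er monotonicity of $(\mathbf{z}^k)$ with respect to $\Fix T$, the asymptotic regularity $(\Id - T)\mathbf{z}^k \to 0$, and the ergodic rate $O(1/\sqrt{k})$. To obtain $\sum_{i=1}^n s_i x_i^k \to 0$, I note that Lemma~\ref{l:ran_ker} and Assumption~\ref{a:stand}\ref{a:stand_kerM} give $\ran M = (\ker M^\top)^\perp = \{s \in \mathbb{R}^n : \sum_i s_i = 0\}$, so the claim follows from Remark~\ref{r:OnAlgo}\ref{r:OnAlgo_xz} combined with the asymptotic regularity.

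Part~\ref{t:cvg_wco_x} will follow by direct application of Lemma~\ref{l:shadow}\ref{l:shadow_wcvg}, whose hypotheses are now all in place: maximal monotonicity of the $A_i$, monotonicity and Lipschitz continuity of the $B_j$, nonemptiness of the zero set, Assumption~\ref{a:stand}\ref{a:stand_kerM}--\ref{a:stand_R}, zero first rows of $N$, $P$, $Q$, the Fej\'er monotonicity of $(\mathbf{z}^k)$, and $(\Id - T)\mathbf{z}^k \to 0$.

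I do not foresee any serious obstacle. The only non-mechanical step is the parameter rescaling $\zeta_k = \lambda_k/\theta$, which shows that the auxiliary scalar $\theta$ introduced in the definition of $T$ drops out of the convergence conditions on $(\lambda_k)$ (since both the admissible range and the $\liminf$ condition depend only on $\gamma\ell\tau$); this is what allows the theorem to be stated intrinsically in terms of $\lambda_k$ rather than in terms of the artificial parameter $\theta$.
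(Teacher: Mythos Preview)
Your proposal is correct and follows essentially the same route as the paper's own proof: invoke Lemma~\ref{l:fixzeros} for $\Fix T\neq\varnothing$, Lemma~\ref{l:nonexpansive}\ref{Lipschitz} for conical $\rho$-quasiaveragedness with $\rho=\theta/(1-\gamma\ell\tau)$, then Proposition~\ref{p:KM} for Fej\'er monotonicity, asymptotic regularity and the ergodic rate, Remark~\ref{r:OnAlgo}\ref{r:OnAlgo_xz} for the consensus residual, and finally Lemma~\ref{l:shadow} for part~\ref{t:cvg_wco_x}. Your explicit treatment of the rescaling $\zeta_k=\lambda_k/\theta$ and of the identification $\ran M=\{s:\sum_i s_i=0\}$ spells out two steps the paper leaves implicit, but there is no substantive difference in the argument.
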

\begin{proof}
\ref{t:cvg_wco_zTz}: Since \( \zer(\sum_{i=1}^n A_i + \sum_{j=1}^{p} B_j) \ne \varnothing \), we have from Lemma \ref{l:fixzeros} that \( \Fix T \ne \varnothing \). In view of Lemma \ref{l:nonexpansive}\ref{Lipschitz}, $T$ is conically $\rho$-quasiaveraged with $\rho=\frac{\theta}{1-\gamma\ell\tau}$. Using Proposition~\ref{p:KM}, we obtain that $(\mathbf{z}^k)_{k\in \mathbb{N}}$ is Fej\'er monotone with respect to $\Fix T$ as well as $(\Id -T)\mathbf{z}^k \to 0$ and $\|\frac{1}{k+1}\sum_{t=0}^k (\Id -T)\mathbf{z}^t\| = O(\frac{1}{\sqrt{k}})$ as $k\to +\infty$. This together with Remark~\ref{r:OnAlgo}\ref{r:OnAlgo_xz} implies the convergence properties of $\sum_{i=1}^n s_i x_i^k$.

\ref{t:cvg_wco_x}: This follows from \ref{t:cvg_wco_zTz}, the Fej\'er monotonicity of $(\mathbf{z}^k)_{k\in \mathbb{N}}$ with respect to $\Fix T$, and 
Lemma~\ref{l:shadow}.
\end{proof}

\begin{theorem}[Convergence with cocoercivity]
\label{t:cvg_co}
Suppose that $A_1$, \dots, $A_n$ are maximally monotone, that $B_1, \dots, B_p$ are $\frac{1}{\ell}$-cocoercive, that \( \zer(\sum_{i=1}^n A_i + \sum_{j=1}^{p} B_j) \ne \varnothing \), and that Assumption~\ref{a:stand} holds. Let $\tau=\|(P^{\top}-R)(M^{\top})^\dag\|^2$ and let \( (\mathbf{z}^k)_{k\in\mathbb{N}}\) and \( (\mathbf{x}^k)_{k\in\mathbb{N}}\) be the sequences generated by Algorithm~\ref{algo:full} with $Q=0$, $\gamma \in \left( 0, \frac{2}{ \ell\tau} \right)$, and $(\lambda_k)_{k\in \mathbb{N}}$ in $[0,\frac{2-\gamma\ell\tau}{2}]$ satisfying $\sum_{k=0}^{+\infty} \lambda_k(1-\frac{2\lambda_k}{2-\gamma\ell\tau}) = +\infty$. Then, as $k\to +\infty$, the following hold:
\begin{enumerate}
\item\label{t:cvg_co_zTz} 
$(\Id -T)\mathbf{z}^k\to 0$ and $\sum_{i=1}^n s_ix^k_i\to 0$ whenever $(s_1, \dots, s_n)\in \mathbb{R}^n$ with $\sum_{i=1}^n s_i =0$. Moreover, $\|(\Id -T)\mathbf{z}^k\| =o(\frac{1}{\sqrt{k}})$ if $\liminf_{k\rightarrow+\infty} \lambda_k(1-\frac{2\lambda_k}{2-\gamma\ell\tau}) > 0$.
\item\label{t:cvg_co_z}
\( \mathbf{z}^k \rightharpoonup \bar{\mathbf{z}} \in \Fix T \). 
\item\label{t:cvg_co_Bx}
$\mathbf{B}(R\mathbf{x}^k) \rightarrow \mathbf{B}(R\bar{\mathbf{x}})$ with $\bar{\mathbf{x}} = S\bar{\mathbf{z}}$.
\item\label{t:cvg_co_x}
\( \mathbf{x}^k \rightharpoonup (\bar{x}, \dots, \bar{x}) \in \mathcal{H}^n \) with \( \bar{x} \in \zer(\sum_{i=1}^n A_i + \sum_{j=1}^{p} B_j) \) provided that the first rows of $N$ and $P$ are zero.
\end{enumerate}
\end{theorem}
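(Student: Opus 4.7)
The plan is to parallel the structure of Theorem~\ref{t:cvg_wco}, substituting the conical averagedness from Lemma~\ref{l:nonexpansive}\ref{cocoercive} in place of quasiaveragedness. First, Lemma~\ref{l:fixzeros} ensures $\Fix T \neq \varnothing$, while Lemma~\ref{l:nonexpansive}\ref{cocoercive}, together with Assumption~\ref{a:stand}\ref{a:stand_neg_semidef} and $\gamma < 2/(\ell\tau)$, yields that $T$ is conically $\rho$-averaged with $\rho = \frac{2\theta}{2-\gamma\ell\tau}$. Viewing the iteration as $\mathbf{z}^{k+1} = (1-\zeta_k)\mathbf{z}^k + \zeta_k T\mathbf{z}^k$ with $\zeta_k = \lambda_k/\theta \in [0,1/\rho]$ (per Remark~\ref{r:OnAlgo}\ref{r:OnAlgo_operator}), I would expand $\|\mathbf{z}^{k+1}-\bar{\mathbf{z}}\|^2$ via the standard convex-combination identity and insert \eqref{averagedness} evaluated at $(\mathbf{z}^k,\bar{\mathbf{z}})$ for any $\bar{\mathbf{z}}\in\Fix T$ (so that $(\Id-T)\bar{\mathbf{z}}=0$ and $\bar{\mathbf{x}}=S\bar{\mathbf{z}}$). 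After simplification this gives a Fej\'er recursion with three nonnegative decrements subtracted on the right-hand side: one proportional to $\lambda_k(1-\tfrac{2\lambda_k}{2-\gamma\ell\tau})\|(\Id-T)\mathbf{z}^k\|^2$, one proportional to $\lambda_k\|\mathbf{a}_k\|^2$ where $\mathbf{a}_k := U(\Id-T)\mathbf{z}^k + \tfrac{2\theta}{\ell}(\mathbf{B}(R\mathbf{x}^k)-\mathbf{B}(R\bar{\mathbf{x}}))$, and one proportional to $\lambda_k\langle \mathbf{x}^k-\bar{\mathbf{x}},[2D-N-N^\top-MM^\top](\mathbf{x}^k-\bar{\mathbf{x}})\rangle$. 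Telescoping delivers both Fej\'er monotonicity of $(\mathbf{z}^k)$ with respect to $\Fix T$ and summability of each decrement.

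For \ref{t:cvg_co_zTz}, combining the summability of $\lambda_k(1-\tfrac{2\lambda_k}{2-\gamma\ell\tau})\|(\Id-T)\mathbf{z}^k\|^2$ with the hypothesis $\sum_k \lambda_k(1-\tfrac{2\lambda_k}{2-\gamma\ell\tau}) = +\infty$ and the monotonicity $\|(\Id-T)\mathbf{z}^{k+1}\| \le \|(\Id-T)\mathbf{z}^k\|$ forces $(\Id-T)\mathbf{z}^k \to 0$. I would obtain the monotonicity by writing $T = (1-\rho)\Id + \rho N$ with $N$ nonexpansive, so that $\mathbf{z}^{k+1} = (1-\zeta_k\rho)\mathbf{z}^k + \zeta_k\rho N\mathbf{z}^k$ is a genuine Krasnosel'ski\u{\i}--Mann iterate of $N$, to which the Opial--Browder argument gives $\|(\Id-N)\mathbf{z}^{k+1}\|\le\|(\Id-N)\mathbf{z}^k\|$; rescaling by $\rho$ transfers this to $\Id-T$. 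The convergence $\sum_{i=1}^n s_i x_i^k \to 0$ is then immediate from Remark~\ref{r:OnAlgo}\ref{r:OnAlgo_xz}. Under the stronger hypothesis $\liminf_k \lambda_k(1-\tfrac{2\lambda_k}{2-\gamma\ell\tau}) > 0$, the summability upgrades to $\sum_k \|(\Id-T)\mathbf{z}^k\|^2 < +\infty$, which combined with monotonicity yields the rate $o(1/\sqrt{k})$ via a halving-index argument $\tfrac{k}{2}\|(\Id-T)\mathbf{z}^k\|^2 \le \sum_{j=\lceil k/2\rceil}^{k}\|(\Id-T)\mathbf{z}^j\|^2 \to 0$.

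For \ref{t:cvg_co_z}, Opial's lemma applied to the Fej\'er-monotone sequence $(\mathbf{z}^k)$ reduces weak convergence to showing that every weak cluster point lies in $\Fix T$; this is precisely the weak-strong sequential closedness argument carried out in the second half of the proof of Lemma~\ref{l:shadow} (building an auxiliary maximally monotone operator $\mathcal{S}$), and does not rely on the first-row-zero hypothesis. For \ref{t:cvg_co_Bx}, I would specialize the telescoped bound to the weak limit $\bar{\mathbf{z}}$ from \ref{t:cvg_co_z} with $\bar{\mathbf{x}} = S\bar{\mathbf{z}}$: summability of $\lambda_k\|\mathbf{a}_k\|^2$, combined with $U(\Id-T)\mathbf{z}^k \to 0$ from \ref{t:cvg_co_zTz} and the triangle inequality, reduces $\mathbf{B}(R\mathbf{x}^k)\to\mathbf{B}(R\bar{\mathbf{x}})$ to establishing $\|\mathbf{a}_k\|\to 0$. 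I anticipate this to be the main obstacle: since the weights $\lambda_k$ may vanish, bare summability is insufficient; my plan is to couple the telescoped bound with a monotone-decrease property of $\|\mathbf{a}_k\|$ derived by applying \eqref{averagedness} to the consecutive pair $(\mathbf{z}^k,\mathbf{z}^{k+1})$ and exploiting both the cocoercivity of $\mathbf{B}$ and the averaged structure of $T$, in the spirit of the $\Id-N$ argument used for \ref{t:cvg_co_zTz}. Finally, for \ref{t:cvg_co_x}, since $\frac{1}{\ell}$-cocoercivity implies both monotonicity and $\ell$-Lipschitz continuity, and the first-row-zero condition on $Q$ is automatic (because $Q=0$), Lemma~\ref{l:shadow} applies directly under the additional zero-first-row hypotheses on $N$ and $P$, yielding $\mathbf{x}^k \rightharpoonup (\bar x,\dots,\bar x)$ with $\bar x\in\zer(\sum_{i=1}^n A_i + \sum_{j=1}^p B_j)$.
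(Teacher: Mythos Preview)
Your plan for \ref{t:cvg_co_zTz} and \ref{t:cvg_co_x} is essentially the paper's route (the paper just cites \cite[Proposition~2.9]{BDP22} for the Krasnosel'ski\u{\i}--Mann machinery, but unpacking it as you do is fine). There are, however, two genuine gaps.

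For \ref{t:cvg_co_z}, the weak--strong closedness argument inside Lemma~\ref{l:shadow} \emph{does} use the first-row-zero hypothesis: it is needed to obtain boundedness of $(x_1^k)$, without which $(\mathbf{x}^k)$ need not have weak cluster points and the passage to the limit in the $\mathcal{S}$-inclusion fails. The correct route here is the one you already set up in \ref{t:cvg_co_zTz}: since $T=(1-\rho)\Id+\rho N$ with $N$ nonexpansive, the demiclosedness principle for $\Id-N$ (e.g., \cite[Corollary~4.28]{BC17}) shows every weak cluster point of $(\mathbf{z}^k)$ lies in $\Fix N=\Fix T$, and Opial finishes. No appeal to Lemma~\ref{l:shadow} is needed for this part.

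The more serious gap is in \ref{t:cvg_co_Bx}. Your telescoped bound only yields $\sum_k \lambda_k\|\mathbf{a}_k\|^2<+\infty$, and you correctly diagnose that this is insufficient when $\lambda_k$ may vanish; but your proposed remedy --- deducing monotonicity of $\|\mathbf{a}_k\|$ from \eqref{averagedness} applied to $(\mathbf{z}^k,\mathbf{z}^{k+1})$ --- does not go through: that inequality controls $\|T\mathbf{z}^k-T\mathbf{z}^{k+1}\|$ and $\|(\Id-T)\mathbf{z}^k-(\Id-T)\mathbf{z}^{k+1}\|$, not $\|\mathbf{a}_{k+1}\|$ versus $\|\mathbf{a}_k\|$, and there is no evident way to extract the latter. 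The paper avoids telescoping entirely. It applies \eqref{averagedness} at $(\mathbf{z}^k,\bar{\mathbf{z}})$ with $\bar{\mathbf{z}}$ the weak limit from \ref{t:cvg_co_z}, drops the nonnegative quadratic form, and rewrites $\|\mathbf{z}^k-\bar{\mathbf{z}}\|^2-\|T\mathbf{z}^k-\bar{\mathbf{z}}\|^2 = 2\langle \mathbf{z}^k-\bar{\mathbf{z}},(\Id-T)\mathbf{z}^k\rangle-\|(\Id-T)\mathbf{z}^k\|^2$ to obtain
\[
\Bigl(\tfrac{1}{\theta}-\tfrac{\gamma\ell}{2\theta}\|U\|^2\Bigr)\|(\Id-T)\mathbf{z}^k\|^2
+\tfrac{\gamma\ell}{2\theta}\|\mathbf{a}_k\|^2 \;\le\; 2\langle \mathbf{z}^k-\bar{\mathbf{z}},(\Id-T)\mathbf{z}^k\rangle.
\]
The right-hand side tends to $0$ by Cauchy--Schwarz (boundedness of $(\mathbf{z}^k)$ and $(\Id-T)\mathbf{z}^k\to 0$), and since $\gamma<2/(\ell\tau)$ the first coefficient is positive, forcing $\|\mathbf{a}_k\|\to 0$ directly. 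This bypasses the $\lambda_k$ weights altogether and is the step you are missing.
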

\begin{proof}
\ref{t:cvg_co_zTz} \& \ref{t:cvg_co_z}: As \( \zer(\sum_{i=1}^n A_i + \sum_{j=1}^{p} B_j) \ne \varnothing \), it holds that \( \Fix T \ne \varnothing \) due to Lemma \ref{l:fixzeros}. We then have from Lemma \ref{l:nonexpansive} that $T$ is conically $\rho$-averaged with $\rho=\frac{2\theta}{2-\gamma\ell\tau}$ and from \cite[Proposition~2.9]{BDP22} that $(\mathbf{z}^k)_{k\in \mathbb{N}}$ is Fej\'er monotone with respect to $\Fix T$, \( (\Id -T)\mathbf{z}^k \to 0 \), $\mathbf{z}^k \rightharpoonup \bar{\mathbf{z}} \in \Fix T$, and if $\liminf_{k\rightarrow+\infty} \lambda_k(1-\frac{2\lambda_k}{2-\gamma\ell\tau}) > 0$, then $\|(\Id -T)\mathbf{z}^k\| =o(\frac{1}{\sqrt{k}})$. Now, the conclusions on $\sum_{i=1}^n s_ix^k_i$ follows from Remark~\ref{r:OnAlgo}\ref{r:OnAlgo_xz}.
    
\ref{t:cvg_co_Bx}: Set $U =(P^{\top}-R)(M^{\top})^\dag$. As $\bar{\mathbf{z}}\in \Fix T$, we have from Lemma~\ref{l:nonexpansive}\ref{cocoercive} and Assumption~\ref{a:stand}\ref{a:stand_neg_semidef} that, for all $k\in \mathbb{N}$,
\begin{multline*}
\|T\mathbf{z}^k -\bar{\mathbf{z}}\|^2 +\left(\frac{1 -\theta}{\theta} -\frac{\gamma\ell}{2\theta}\|U\|^2\right)\|(\Id -T)\mathbf{z}^k\|^2 \\
+\frac{\gamma\ell}{2\theta}\left\| U(\Id -T)\mathbf{z}^k +\frac{2\theta}{\ell}(\mathbf{B}(R\mathbf{x}^k) -\mathbf{B}(R\bar{\mathbf{x}}))\right\|^2 \leq \|\mathbf{z}^k -\bar{\mathbf{z}}\|^2,
\end{multline*}
which is equivalent to
\begin{multline*}
\left(\frac{1}{\theta} -\frac{\gamma\ell}{2\theta}\|U\|^2\right)\|(\Id -T)\mathbf{z}^k\|^2 \\
+\frac{\gamma\ell}{2\theta}\left\| U(\Id -T)\mathbf{z}^k +\frac{2\theta}{\ell}(\mathbf{B}(R\mathbf{x}^k) -\mathbf{B}(R\bar{\mathbf{x}}))\right\|^2 \leq 2\langle \mathbf{z}^k -\bar{\mathbf{z}}, (\Id -T)\mathbf{z}^k\rangle.
\end{multline*}
Letting $k\to +\infty$ and noting that $(\Id -T)\mathbf{z}^k\to 0$ and $(\mathbf{z}^k)_{k\in \mathbb{N}}$ is bounded, we get the conclusion.

\ref{t:cvg_co_x}: Since $Q=0$ and since $\frac{1}{\ell}$-cocoercive operators are also monotone and $\ell$-Lipschitz continuous, the conclusion follows directly from \ref{t:cvg_co_zTz}, the Fej\'er monotonicity of $(\mathbf{z}^k)_{k\in \mathbb{N}}$ with respect to $\Fix T$, and Lemma~\ref{l:shadow}.
\end{proof}

We note that while our Theorem~\ref{t:cvg_co}\ref{t:cvg_co_Bx} generalizes \cite[Theorem 3.7(c)]{AMTT23}, its proof is simpler and more comprehensive due to the use of an extra term in the inequality in Lemma~\ref{l:nonexpansive}\ref{cocoercive}.

\begin{remark}[Ranges of parameters $\gamma$ and $\lambda_k$]
\label{r:ranges}
\begin{enumerate}
\item
In view of Theorems~\ref{t:cvg_wco} and \ref{t:cvg_co}, once Assumption~\ref{a:stand} holds, if $B_1$, \dots, $B_p$ are monotone and $\ell$-Lipschitz continuous, $Q^{\top}\bOne = \bOne$, and $\tau =\|(P^{\top}-Q^{\top})(M^{\top})^\dag\|^2+\|(P^{\top}-R)(M^{\top})^\dag\|^2$, then the conditions for $\gamma$ and $\lambda_k$ are
\begin{align}\label{eq:ranges_Lip}
\gamma \in \left(0, \frac{1}{\ell\tau}\right) \text{~and~} \lambda_k \in \left[0, 1-\gamma\ell\tau\right] \text{~with~} \liminf_{k\rightarrow +\infty} \lambda_k(1-\frac{\lambda_k}{1-\gamma\ell\tau}) > 0;    
\end{align}
if $B_1$, \dots, $B_p$ are $\frac{1}{\ell}$-cocoercive, $Q =0$, and $\tau =\|(P^{\top}-R)(M^{\top})^\dag\|^2$, then 
\begin{align}\label{eq:ranges_coc}
\gamma \in \left( 0, \frac{2}{ \ell\tau} \right) \text{~and~} \lambda_k \in \left[0, \frac{2-\gamma\ell\tau}{2}\right] \text{~with~} \sum_{k=0}^{+\infty} \lambda_k(1-\frac{2\lambda_k}{2-\gamma\ell\tau}) = +\infty.     
\end{align}
Here, we note that the step size corresponding to $A_i$ is $\frac{\gamma}{\delta_i}$ rather than $\gamma$.

\item\label{r:ranges_tau} 
To obtain the upper bounds for $\gamma$ and $\lambda_k$, computing $\tau$ is necessary though it may have a closed form in some cases. For example, given $Q=0$ and a matrix $M\in\mathbb{R}^{n\times m}$ satisfying Assumption~\ref{a:stand}\ref{a:stand_kerM}, in the case when every row sum of the strictly upper triangular part of $M^{\top}$ is nonzero, i.e., for all $i\in \{1, \dots, n-1\}$, $s_i :=\sum_{j=i+1}^n M_{ji}\neq 0$, we can choose $P\in \mathbb{R}^{n\times p}$ and $R\in \mathbb{R}^{p\times n}$ such that
\begin{align*}
P_{ji} &= \begin{cases}
\frac{M_{ji}}{s_i} &\text{if~} i\in \{1, \dots, p\},\ j\in \{i+1, \dots, n\}, \\
0 &\text{otherwise}
\end{cases} \\
\text{and~~} R &= P^{\top} - UM^{\top},
\end{align*}
where $U =[\diag(\frac{1}{s_1}, \dots, \frac{1}{s_p}) \,|\, 0_{p\times (m-p)}] \in \mathbb{R}^{p\times m}$. Then $P$ is lower triangular with zeros on the diagonal, $P^{\top}\bOne =\bOne $, $R$ is lower triangular, and $R\bOne = P^{\top}\bOne - UM^{\top}\bOne = P^{\top}\bOne = \bOne$. It follows that Assumption~\ref{a:stand}\ref{a:stand_P}--\ref{a:stand_R} are satisfied and $\tau =\|U\|^2 = \max\left\{\frac{1}{s_1^2}, \dots, \frac{1}{s_p^2}\right\}$. As will be seen later, this occurs in \emph{Case~1} of Remark~\ref{r:smallcases}, \emph{Case~1} of Example~\ref{eg:ring_seq}, the cocoercive cases of Examples~\ref{eg:star}, and Remark~\ref{r:ps}.
    
In the case when $Q\neq0$, simple choices of $P$, $Q$, and $R$ with a closed-form expression for $\tau$ are not always straightforward to derive. Nevertheless, such situations do exist, as described in \emph{Case~2} of Example~\ref{eg:ring_seq} and the Lipschitz cases of Example~\ref{eg:star}. It is worth noting that even when $\tau$ does not have a closed-form expression, it only requires a one-time computation prior to executing the algorithm, thus having negligible impact on performance.

\item 
As mentioned in Remark~\ref{r:variant}, in the setting where $B_1$, \dots, $B_p$ are monotone and $\ell$-Lipschitz continuous (resp., $\frac{1}{\ell}$-cocoercive), if additionally \eqref{eq:semidef_L} (resp., \eqref{eq:semidef_C}) holds, then we can set $\tau =0$ in \eqref{eq:ranges_Lip} (resp., \eqref{eq:ranges_coc}) and adopt the convention that $\frac{1}{\tau} =+\infty$. However, $\gamma$ must still satisfy \eqref{eq:semidef_L} (resp., \eqref{eq:semidef_C}), and solving for $\gamma$ may require computing the smallest generalized eigenvalue of $2D - N - N^\top - MM^\top$ with respect to $(P-Q)(P^{\top}-Q^{\top}) + (P- R^{\top})(P^{\top}- R)$ (resp., $(P - R^\top)(P^\top - R)$). 

Now, we consider a special case of \eqref{eq:semidef_C}, which is simpler than \eqref{eq:semidef_L}. Recall the setup and choices in the first part of \ref{r:ranges_tau} and suppose further that $2D-N-N^\top -MM^\top = MM^\top$. Set $s :=\min_{i\in\{1,\dots,p\}} s_i^2$. Then \eqref{eq:semidef_C} is equivalent to
\begin{align*}
MM^\top - \frac{\gamma\ell}{2}MU^\top U M^\top = M\left( \Id  - \frac{\gamma\ell}{2}\diag\left(\frac{1}{s_1^2}, \dots, \frac{1}{s_p^2}, 0, \dots, 0\right)\right)M^\top \succeq 0 \iff \gamma \leq \frac{2s}{\ell},    
\end{align*}
which is slightly less restrictive than $\gamma < \frac{2}{\ell\|U\|^2} = \frac{2s}{\ell}$, obtained from \eqref{eq:ranges_coc} under Assumption~\ref{a:stand}\ref{a:stand_neg_semidef}. This situation arises in \cite[Equation (3.15)]{BCLN22} considered in \emph{Case~1} of Example~\ref{eg:ring_seq}, \cite[Equation (3.14)]{BCLN22} considered in \emph{Case~1} of Example~\ref{eg:star}, and \eqref{algo:ps}--\eqref{algo:ps2} in Remark~\ref{r:ps}.
\end{enumerate}
\end{remark}

\section{Weighted graph-based forward-backward algorithms}
\label{s:graphs}

In this section, we provide some special cases of Algorithm~\ref{algo:full} for solving the monotone inclusion problem \eqref{gen_prob}. We describe how to obtain various weighted graph-based algorithms from our framework. Some of the derived algorithms can be seen as extensions of known methods, while the others such as those with explicit formulas, flexible weights, and without cocoercivity assumptions seem to be new and promising.

\paragraph{Some notations from graph theory.} 
An undirected graph is a pair $G=(\mathcal{V}, \mathcal{E})$, where $\mathcal{V}=\{v_1,\dots, v_n\}$ is a set of \emph{nodes} or \emph{vertices}, and $\mathcal{E}\subseteq \{\{u, v\}: u, v\in\mathcal{V},\ u \neq v\}$ is a set of unordered pairs of distinct nodes, called \emph{edges}. The edge $\{u,v\}$ is said to be \emph{incident} on $u$ and on $v$ while $u$ and $v$ are \emph{adjacent}. For any two nodes $u,v \in \mathcal{V}$, a \emph{path from $u$ to $v$} is a sequence of nodes $(v_0, v_1,\dots,v_r)$ such that $v_0=u, v_r=v$, and $\{v_i,v_{i+1}\} \in\mathcal{E}$ for $0\leq i\leq r-1$. The graph $G$ is \emph{connected} if for any two distinct nodes $u,v\in\mathcal{V}$, there is a path from $u$ to $v$. An \emph{orientation} of an undirected graph is the assignment of a direction to each edge in the graph. The \emph{adjacency matrix} $\Adj(G) \in \mathbb{R}^{n\times n}$ of $G$ is a symmetric matrix such that $\Adj(G)_{ij} = 1$ if $v_i$ and $v_j$ are adjacent (i.e., $\{v_i, v_j\}\in\mathcal{E}$), and $0$ otherwise. We say that $G$ is a \emph{weighted} graph if $G=(\mathcal{V}, \mathcal{E})$ is an undirected graph, where each edge $\{v_i, v_j\}\in \mathcal{E}$ is associated with $w_{ij}=w_{ji}\in \mathbb{R}$. By setting $w_{ij} =0$ if $\{v_i, v_j\}\notin \mathcal{E}$, we obtain a symmetric matrix $W=(w_{ij})\in\mathbb{R}^{n\times n}$ with $n =|\mathcal{V}|$ and call it \emph{weight matrix}.
In the unit-weight case, i.e., $w_{ij}\in\{0,1\}$, the adjacency matrix $\Adj(G)$ can be considered as the weight matrix of $G$. 

Let $G=(\mathcal{V}, \mathcal{E})$ is a weighted graph with weight matrix $W=(w_{ij})\in\mathbb{R}^{n\times n}$. For each node $v_i\in\mathcal{V}$, the \emph{degree} $d_i$ of $v_i$ is the sum of the weights of the edges incident on $v_i$, i.e., $d_i = \sum_{j=1}^n w_{ij}$, which reduces to the number of edges incident on $v_i$ in the unit-weight setting. The \emph{degree matrix} of $G$ is $\Deg(G)=\diag(d_1$, \dots, $d_n)\in\mathbb{R}^{n\times n}$. The \emph{incidence matrix} of $G$ with orientation $\sigma$ is $\Inc(G^{\sigma})\in\mathbb{R}^{n \times q}$ given by
\begin{align}\label{eq:Inc}
\Inc(G^\sigma)_{ij}=
\begin{cases}
\sqrt{w_{e_j}} \quad &\text{~if edge $e_j$ leaves node $v_i$},\\
-\sqrt{w_{e_j}} &\text{~if edge $e_j$ enters node $v_i$},\\
0 &\text{~otherwise},
\end{cases}
\end{align}
where $w_{e_j}$ denotes the weight of edge $e_j$, which is equal to $1$ if the graph has unit-weights. The \emph{Laplacian matrix} of a weighted graph is defined by $\Lap(G)=\Deg(G)-W$. According to \cite[Proposition 18.3]{GQ20}, $\Lap(G)$ is a symmetric and positive semidefinite matrix and 
\begin{align}\label{eq:decompose}
\Lap(G) = \Inc(G^\sigma)\Inc(G^\sigma)^{\top} = \Deg(G) - W. 
\end{align}  
Moreover, $\Inc(G^\sigma)\Inc(G^\sigma)^{\top}$ is independent of the orientation of the graph $G$. Laplacian matrix is invariant under node reordering, up to a permutation similarity \cite[Section 2]{Rus94}.

\paragraph{A graph-based selection for coefficient matrices $M$, $N$, and $D$.}
\label{eg:graph}
From now on, let $G =(\mathcal{V},\mathcal{E})$ be a weighted connected graph with $\mathcal{V}=\{v_1,\dots,v_n\}$ ($|\mathcal{V}|=n \geq 2$), $\mathcal{E}=\{e_1,\dots,e_q\}\subseteq \mathcal{V}\times \mathcal{V}$ ($|\mathcal{E}|=q \geq n-1$), and symmetric weight matrix $W=(w_{ij})\in [0, +\infty)^{n\times n}$. Let $G'=(\mathcal{V}, \mathcal{E'})$ be a weighted connected subgraph of $G$ with $\mathcal{E'}\subseteq \mathcal{E}$, $n-1 \leq |\mathcal{E'}|=m \leq q$, and symmetric weight matrix $W'=(w_{ij}')\in\mathbb{R}^{n\times n}$ such that, for all $(v_i,v_j)\in \mathcal{E'}$,
\begin{align*}
w_{ij}' = w_{ji}' = \mu_{ij}^2 \leq w_{ij} \text{~~with~} \mu_{ij}\in [0, +\infty),
\end{align*}
i.e., the weight on each edge of $G'$ is at most the weight on the corresponding edge of $G$. 

Let $M = (M_{ij}) \in\mathbb{R}^{n\times m}$ be such that $MM^{\top} = \Lap(G')$. Then Assumption~\ref{a:stand}\ref{a:stand_kerM} is satisfied, since $\ker M^{\top}=\ker MM^{\top}=\ker\Lap(G')=\lspan\{\bOne\}$, where the last equality follows from the connectedness of $G'$ and \cite[Theorem 13.1.1]{GR01}. In view of \eqref{eq:decompose}, a possible choice for $M$ is
\begin{align}\label{eq:M}
M = \Inc(G'^{\sigma})    
\end{align}
for some orientation $\sigma$ of $G'$. We choose 
$\sigma$ to be the orientation of $G'$ that assigns to each edge $\{v_i, v_j\}\in\mathcal{E'}$ the directed edge $(v_i, v_j)$ if $i<j$, and $(v_j, v_i)$ if $j<i$. 

Let $N=(N_{ij})\in\mathbb{R}^{n\times n}$ be given by 
\begin{align}\label{eq:N}
N_{ij} =\begin{cases}
w_{ji} &\text{if~} i >j, \\
0 &\text{otherwise}
\end{cases}    
\end{align}
and let 
\begin{align}\label{eq:D}
D=\diag(\delta_1,\dots,\delta_n)=\frac{1}{2}\Deg(G)\in\mathbb{R}^{n\times n},
\end{align}
i.e., for all $i\in \{1, \dots, n\}$, $\delta_i =\frac{1}{2}d_i$. Then $N$ is a lower triangular matrix with zero diagonal and 
\begin{align*}
\sum_{i,j=1}^n N_{ij} = \frac{1}{2}\sum_{i,j=1}^n (N_{ij} +N_{ji}) = \frac{1}{2}\sum_{i=1}^n \left(\sum_{j=1}^n w_{ij}\right) = \frac{1}{2}\sum_{i=1}^n d_i = \sum_{i=1}^n \delta_i,    
\end{align*}
which satisfies Assumption~\ref{a:stand}\ref{a:stand_N}.

To verify Assumption~\ref{a:stand}\ref{a:stand_neg_semidef}, we first observe that 
\begin{align}
\label{eq:LapGG'}
2D-N-N^{\top}-MM^{\top}=\Deg(G)-W- \Lap(G')=\Lap(G)-\Lap(G').    
\end{align}
Take any $u =(u_1, \dots, u_n)^\top \in \mathbb{R}^n$. By \cite[Proposition 18.4]{GQ20}, $u^{\top}\Lap(G)u= \frac{1}{2}\sum_{(i,j)\in \mathcal{E}} w_{ij}(u_i -u_j)^2$. As a result, 
\begin{align*}
u^\top(\Lap(G)-\Lap(G'))u = \frac{1}{2}\sum_{(i,j)\in \mathcal{E}\setminus\mathcal{E'}} w_{ij}(u_i -u_j)^2+ \frac{1}{2}\sum_{(i,j)\in \mathcal{E'}} (w_{ij} -w_{ij}')(u_i -u_j)^2 \geq 0,
\end{align*}
which implies that $2D-N-N^{\top}-MM^{\top} \succeq 0$. Hence, Assumption~\ref{a:stand} is satisfied provided that the matrices $P$ and $R$ meet Assumption~\ref{a:stand}\ref{a:stand_P}--\ref{a:stand_R}, which will be specified later depending on the particular scenario considered. 

It is worth noting from \eqref{eq:LapGG'} that, if $G$ and $G'$ are the same graph with the same weights, then $2D-N-N^{\top}-MM^{\top} =0$; if $G$ and $G'$ are the same graph with $w_{ij} =2w'_{ij}$ whenever $\{v_i, v_j\}\in \mathcal{E}$, then $2D-N-N^{\top}-MM^{\top} =MM^{\top}$. In the remainder of this section, we use the above graph-based selection of $M$, $N$, and $D$ for all subsequent examples, specializing it to different graph topologies to derive concrete algorithms.

\begin{example}[Weighted graph-based forward-backward algorithms]
For each $i\in \{2, \dots, p+1\}$, let $h(i)\in\{1, \dots, i-1\}$, chosen so that $\{v_{h(i)}, v_i\}\in \mathcal{E}$ whenever possible. Let $R=(R_{ij})\in\mathbb{R}^{p\times n}$ with
\begin{align*}
    R_{ij} =\begin{cases}
    1 &~\text{if~} j =h(i+1), \\
    0 &~\text{otherwise}.
    \end{cases}
\end{align*}
Then $R$ is lower triangular and satisfies Assumption~\ref{a:stand}\ref{a:stand_R}.

\emph{Case~1}: $p=n-1$ and $B_1,\dots,B_p$ are cocoercive. Then, by letting $Q=0$ and $P$ as in \eqref{eq:PQR1}, Algorithm~\ref{algo:full} becomes 
\begin{align}\label{eq:graph-basedFB}
    \begin{cases}
    x_1^k &= J_{\frac{\gamma}{\delta_1} A_1}\left(\frac{1}{\delta_1}\sum_{j=1}^{m} M_{1j}z^k_j\right),\\
    x_i^k &= J_{\frac{\gamma}{\delta_i} A_i}\left(\frac{1}{\delta_i}\sum_{j=1}^{m}M_{ij}z_j^k + \frac{1}{\delta_i}\sum_{j=1}^n N_{ij}x_j^k -\frac{\gamma}{\delta_i} B_{i-1} x_{h(i)}^k\right), \quad i\in\{2,\dots, n\},\\
    z_i^{k+1}\!\!\!\!\! &= z_i^k - \lambda_k\sum_{j=1}^n M_{ji}x_j^k, \quad i\in\{1,\dots, m\}.
    \end{cases}
\end{align}
In turn, when the graphs $G$ and $G'$ are both unit-weighted and $m=|\mathcal{E'}|=n-1$, this algorithm reduces to the \emph{forward-backward algorithms devised by graphs} \cite[Algorithm 1]{ACL24}. 

\emph{Case~2}: $p=n-2$ and $B_1,\dots, B_p$ are monotone and Lipschitz continuous. Let $P$ be as in \eqref{eq:PQR1} and $Q$ as in \eqref{eq:PQR2}. Algorithm~\ref{algo:full} then reduces to
\begin{align}
    \begin{cases}
    x_1^k &= J_{\frac{\gamma}{\delta_1} A_1}\left(\frac{1}{\delta_1}\sum_{j=1}^{m} M_{1j}z^k_j\right),\\
    x_i^k &= J_{\frac{\gamma}{\delta_i} A_i}\left(\frac{1}{\delta_i}\sum_{j=1}^{m}M_{ij}z_j^k + \frac{1}{\delta_i}\sum_{j=1}^n N_{ij}x_j^k -\frac{\gamma}{\delta_i} B_{i-1} x_{h(i)}^k\right), \quad i\in\{2,\dots, n-1\},\\
    x_n^k &= J_{\frac{\gamma}{\delta_n} A_n}\left(\frac{1}{\delta_n}\sum_{j=1}^m M_{nj} z_j^k + \frac{1}{\delta_n} \sum_{j=1}^n N_{nj} x_j^k + \frac{\gamma}{\delta_n}\sum_{j=1}^{n-2}B_j x_{h(j+1)}^k - \frac{\gamma}{\delta_n}\sum_{j=1}^{n-2} B_j x_{j+1}^k\right), \\
    z_i^{k+1}\!\!\!\!\! &= z_i^k - \lambda_k\sum_{j=1}^n M_{ji}x_j^k, \quad i\in\{1,\dots, m\}.
    \end{cases}
\end{align}
\end{example}

\begin{remark}[Algorithms for $n\in \{2, 3\}$ and $p =1$]
\label{r:smallcases}
We consider the following two cases.

\emph{Case~1}: $n =2$, $p =1$, and $B_1 =B$ is cocoercive. Then $G$ and $G'$ both have $n=2$ nodes connected by a single edge $e_1 =\{v_1, v_2\}$, with weights $w_{12}$ and $\mu_{12}^2$, respectively. According to \eqref{eq:Inc}, \eqref{eq:M}, \eqref{eq:N}, and \eqref{eq:D}, 
\begin{align*}
    M =
    \begin{bmatrix}
        \mu_{12}\\
        -\mu_{12}
    \end{bmatrix}, \quad
    N = 
    \begin{bmatrix}
        0 & 0 \\
        w_{12} & 0
    \end{bmatrix}, \quad
    \text{and~}D = \frac{1}{2} w_{12}
    \begin{bmatrix}
        1 & 0\\
        0 & 1
    \end{bmatrix}.
\end{align*}
Since $B$ is cocoercive, we choose $Q =0$. By Assumption~\ref{a:stand}\ref{a:stand_P}--\ref{a:stand_R}, for the algorithm to be explicit (see also Remark~\ref{r:OnAlgo}\ref{r:OnAlgo_explicit}), we can only take $P =[1 \ 0]^\top$ and $R =[1 \ 0]$. Algorithm~\ref{algo:full} now reduces to a special case of \eqref{eq:graph-basedFB}, and by setting $\hat{z}_1^k =\frac{2}{w_{12}}\mu_{12} z_1^k$, $\hat{\gamma} =\frac{2\gamma}{w_{12}}$, and $\hat{\lambda}_k =\lambda_k\frac{2\mu_{12}^2}{w_{12}}$, we obtain 
\begin{align}
\begin{cases}
x_1^k &= J_{\hat{\gamma} A_1}(\hat{z}_1^k),\\
x_2^k &= J_{\hat{\gamma} A_2}(2x_1^k -\hat{z}_1^k -\hat{\gamma} Bx_1^k),\\
\hat{z}_1^{k+1}\!\!\!\!\! &=\hat{z}_1^k -\hat{\lambda}_k(x_1^k -x_2^k),
\end{cases}
\end{align}
which is the Davis--Yin algorithm. In this case, if $G$ and $G'$ have the same weights (i.e., $w_{12} =\mu_{12}^2$), then the conclusions of Theorem~\ref{t:cvg_co} hold provided that $\hat{\gamma} \in \left(0, \frac{4}{\ell}\right)$, $\hat{\lambda}_k \in \left[0, 2 -\frac{\hat{\gamma}\ell}{2}\right]$, and $\sum_{k=0}^{+\infty} \hat{\lambda}_k(1 -\frac{2\hat{\lambda}_k}{4 -\hat{\gamma}\ell}) = +\infty$. This recovers \cite[Corollary~4.2]{DP21}, thereby improving \cite[Theorem~2.1(1)]{DY17}. 

\emph{Case~2}: $n =3$, $p =1$, and $B_1 =B$ is monotone and Lipschitz continuous. In view of Remark~\ref{r:OnAlgo}\ref{r:OnAlgo_explicit}, to make the algorithm explicit and satisfy Assumption~\ref{a:stand}\ref{a:stand_P}--\ref{a:stand_R} as well as $Q^{\top}\bOne=\bOne $, there is only one choice for $P$, $Q$, and $R$, namely, $P =[0\ 1\ 0]^\top$, $Q =[0\ 0\ 1]^\top$, and $R =[1\ 0\ 0]$.
    \begin{figure}[ht]
    \captionsetup{skip=0pt}
        \begin{center}

        \begin{subfigure}{0.24\textwidth}
        \centering
        
        \label{fig:sub1}
    
        \begin{tikzpicture}
        \def\n{3}  
        \def\radius{1.3cm}  
        \tikzset{every edge quotes/.style={fill=white, font=\footnotesize, inner sep=1pt}}
        
        \foreach \i in {1,...,\n} {
            \node[draw, circle, minimum size=1cm] (\i) at ({90-360/\n * (\i-1)}:\radius) {$v_{\i}$};
        }
        \draw[->] (1) -- 
        node[sloped, above, inner sep=1pt] {$e_{1}$} 
        node[sloped, below, inner sep=1pt] {$\mu_{12}^2$}
        (2);
        \draw[->] (1) -- 
        node[sloped, above, inner sep=1pt] {$e_{2}$} 
        node[sloped, below, inner sep=1pt] {$\mu_{13}^2$}
        (3);
        \draw[->] (2) -- 
        node[sloped, above, inner sep=1pt] {$e_{3}$} 
        node[sloped, below, inner sep=1pt] {$\mu_{23}^2$}
        (3);
        \end{tikzpicture}
        \caption{Complete/ring}
        \end{subfigure}
        \begin{subfigure}{0.24\textwidth}
        \centering
        
    
        \begin{tikzpicture}
        \def\n{3}  
        \def\radius{1.3cm}  
        \tikzset{every edge quotes/.style={fill=white, font=\footnotesize, inner sep=1pt}}
        
        \foreach \i in {1,...,\n} {
            \node[draw, circle, minimum size=1cm] (\i) at ({90-360/\n * (\i-1)}:\radius) {$v_{\i}$};
        }
        \draw[->] (1) -- 
        node[sloped, above, inner sep=1pt, pos=0.4] {$e_{1}$} 
        node[sloped, below, inner sep=1pt, pos=0.4] {$\mu_{12}^2$}
        (2);
        \draw[->] (2) -- 
        node[sloped, above, inner sep=1pt] {$e_{2}$} 
        node[sloped, below, inner sep=1pt] {$\mu_{23}^2$}
        (3);
        \end{tikzpicture}
        \caption{Sequential}
        \end{subfigure}
        \begin{subfigure}{0.24\textwidth}
        \centering
        
    
        \begin{tikzpicture}
        \def\n{3}  
        \def\radius{1.3cm}  
        \tikzset{every edge quotes/.style={fill=white, font=\footnotesize, inner sep=1pt}}
        
        \foreach \i in {1,...,\n} {
            \node[draw, circle, minimum size=1cm] (\i) at ({90-360/\n * (\i-1)}:\radius) {$v_{\i}$};
        }
        \draw[->] (1) -- 
        node[sloped, above, inner sep=1pt, pos=0.6] {$e_{1}$} 
        node[sloped, below, inner sep=1pt, pos=0.6] {$\mu_{12}^2$}
        (2);
        \draw[->] (1) -- 
        node[sloped, above, inner sep=1pt, pos=0.6] {$e_{2}$} 
        node[sloped, below, inner sep=1pt, pos=0.6] {$\mu_{13}^2$}
        (3);
        \end{tikzpicture}
        \caption{First-centered star}
        \end{subfigure}
        \begin{subfigure}{0.24\textwidth}
        \centering
        
    
        \begin{tikzpicture}
        \def\n{3}  
        \def\radius{1.3cm}  
        \tikzset{every edge quotes/.style={fill=white, font=\footnotesize, inner sep=1pt}}
        
        \foreach \i in {1,...,\n} {
            \node[draw, circle, minimum size=1cm] (\i) at ({90-360/\n * (\i-1)}:\radius) {$v_{\i}$};
        }
        \draw[->] (1) -- 
        node[sloped, above, inner sep=1pt, pos=0.4] {$e_{1}$} 
        node[sloped, below, inner sep=1pt, pos=0.4] {$\mu_{13}^2$}
        (3);
        \draw[->] (2) -- 
        node[sloped, above, inner sep=1pt] {$e_{2}$} 
        node[sloped, below, inner sep=1pt] {$\mu_{23}^2$}
        (3);
        \end{tikzpicture}
        \caption{Last-centered star}
        \end{subfigure}
        \end{center}
        \caption{Possible 3-node connected graphs. Edge numbers, directions, and weights refer to $G'$}
        \label{fig:3-complete}
    \end{figure}
As $G$ and $G'$ are connected graphs with $n=3$ nodes, all possible such graphs are depicted in Figure~\ref{fig:3-complete}. There are thus 7 scenarios: If $G$ is the complete graph, which in this setting coincides with the ring graph, then $G'$ can be any of the 4 graphs in the figure; if $G$ is one of the 3 remaining graphs, then $G'$, as a subgraph of $G$, must coincide with $G$ (possibly with different weights). 

Let us first consider the scenario where $G$ and $G'$ are both the complete graph. By \eqref{eq:Inc}, \eqref{eq:M}, \eqref{eq:N}, and \eqref{eq:D}, 
\begin{align*}
    M =
    \begin{bmatrix}
        \mu_{12} & \mu_{13} & 0\\
        -\mu_{12} & 0 & \mu_{23}\\
        0 & -\mu_{13} & -\mu_{23}
    \end{bmatrix},\
    N = 
    \begin{bmatrix}
        0 & 0 & 0\\
        w_{12} & 0 & 0\\
        w_{13} & w_{23} & 0
    \end{bmatrix},\ 
    D = \frac{1}{2}
    \begin{bmatrix}
        w_{12} + w_{13} & 0 & 0\\
        0 & w_{12} + w_{23} & 0\\
        0 & 0 & w_{13} + w_{23}
    \end{bmatrix}.
\end{align*}
Then, Algorithm~\ref{algo:full} becomes
\begin{align}\label{algo:3-complete}
    \begin{cases}
        x_1^k &= J_{\frac{2\gamma}{w_{12}+w_{13}} A_1}\Big(\frac{2}{w_{12} + w_{13}}(\mu_{12}z_1^k + \mu_{13}z_2^k)\Big),\\
        x_2^k &= J_{\frac{2\gamma}{w_{12} + w_{23}} A_2}\Big(\frac{2}{w_{12} + w_{23}}(w_{12} x_1^k - \mu_{12}z_1^k + \mu_{23} z_3^k  - \gamma B x_1^k)\Big),\\
        x_3^k &= J_{\frac{2\gamma}{w_{13} + w_{23}} A_3}\Big(\frac{2}{w_{13} + w_{23}}(w_{13}x_1^k + w_{23}x_2^k - \mu_{13}z_2^k - \mu_{23}z_3^k + \gamma B x_1^k - \gamma B x_2^k)\Big),\\
        z_1^{k+1}\!\!\!\!\! &= z_1^k - \lambda_k \mu_{12} (x_1^k - x_2^k),\\
        z_2^{k+1}\!\!\!\!\! &= z_2^k - \lambda_k \mu_{13}(x_1^k - x_3^k),\\
        z_3^{k+1}\!\!\!\!\! &= z_3^k - \lambda_k \mu_{23}(x_2^k - x_3^k).
    \end{cases}
\end{align}
The algorithms corresponding to the 6 remaining scenarios of $G$ and $G'$ can be derived from \eqref{algo:3-complete} by setting either one $\mu_{ij} =0$ (which removes the column containing $\mu_{ij}$ in $M$ and eliminates the calculation of $z_\ell^{k+1}$ that involves $\mu_{ij}$) or one $w_{ij} =0$ (which also implies $\mu_{ij} =0$).

Now, let $\mu_{13}=0$ (i.e., $G'$ is the sequential graph) and, for simplicity, let $\mu_{12}^2 =w_{12} =\mu_{23}^2 =w_{23} =1$. Then \eqref{algo:3-complete} becomes
\begin{align}\label{algo:3-ring_seq}
    \begin{cases}
        x_1^k &= J_{\frac{2\gamma}{1 +w_{13}} A_1}\Big(\frac{2}{1 +w_{13}}z_1^k\Big), \\
        x_2^k &= J_{\gamma A_2}(x_1^k -z_1^k +z_3^k -\gamma Bx_1^k), \\
        x_3^k &= J_{\frac{2\gamma}{1 +w_{13}} A_3}\Big(\frac{2}{1 +w_{13}}(w_{13}x_1^k +x_2^k -z_3^k +\gamma Bx_1^k -\gamma Bx_2^k)\Big), \\
        z_1^{k+1}\!\!\!\!\! &= z_1^k -\lambda_k(x_1^k -x_2^k), \\
        z_3^{k+1}\!\!\!\!\! &= z_3^k -\lambda_k(x_2^k -x_3^k).
    \end{cases}
\end{align}
In this case, the conclusions of Theorem~\ref{t:cvg_wco} hold whenever $\gamma \in \left(0, \frac{1}{2\ell}\right)$, $\lambda_k \in [0, 1-2\gamma\ell]$, and $\liminf_{k\rightarrow +\infty} \lambda_k(1-\frac{\lambda_k}{1-2\gamma\ell}) > 0$.

Setting $A_1 =0$ and $\lambda_k \equiv \frac{1 +w_{13}}{2}$, we have $x_1^k =\frac{2}{1 +w_{13}}z_1^k$ and $z_1^{k+1} =\frac{1 +w_{13}}{2}x_2^k$, so $x_1^k =x_2^{k-1}$. Thus, \eqref{algo:3-ring_seq} reduces to
\begin{align}\label{algo:2-ring_seq}
    \begin{cases}
        x_2^k &= J_{\gamma A_2}\Big(\frac{1 -w_{13}}{2}x_2^{k-1} +z_3^k -\gamma Bx_2^{k-1}\Big), \\
        x_3^k &= J_{\frac{2\gamma}{1 +w_{13}} A_3}\Big(\frac{2}{1 +w_{13}}(w_{13}x_2^{k-1} +x_2^k -z_3^k +\gamma Bx_2^{k-1} -\gamma Bx_2^k)\Big), \\
        z_3^{k+1}\!\!\!\!\! &= z_3^k -\frac{1 +w_{13}}{2}(x_2^k -x_3^k),
    \end{cases}
\end{align}
whose convergence requires $\lambda_k =\frac{1+w_{13}}{2}\leq 1 -2\gamma\ell$, which is equivalent to $\gamma \in \left(0, \frac{1 -w_{13}}{4\ell}\right]$. Further setting $A_3 =0$, we obtain that $x_3^k =\frac{2}{1 +w_{13}}(w_{13}x_2^{k-1} +x_2^k -z_3^k +\gamma Bx_2^{k-1} -\gamma Bx_2^k)$ and $z_3^{k+1} =(w_{13}x_2^{k-1} +x_2^k +\gamma Bx_2^{k-1} -\gamma Bx_2^k) -\frac{1 +w_{13}}{2}x_2^k$, which simplifies \eqref{algo:2-ring_seq} to   
\begin{align}
\label{algo:FRB}
x_2^k = J_{\gamma A_2}((1 -w_{13})x_2^{k-1} +w_{13}x_2^{k-2} -2\gamma Bx_2^{k-1} +\gamma Bx_2^{k-2}).
\end{align}
This reduces to the algorithm in \cite[Remark 7]{AMTT23} when $w_{13} =1$ (corresponding to the limiting case $\lambda_k \equiv 1$), and to the \emph{forward-reflected-backward algorithm} \cite{MT20} when $w_{13} =0$ (i.e., when $G$ is the sequential graph). The latter requires only $\gamma \in (0, \frac{1}{2\ell})$, owing to the underlying Lyapunov analysis. We note that, in each iteration, \eqref{algo:2-ring_seq} and \eqref{algo:FRB} require only one forward evaluation, as the remaining ones can be reused from the previous iteration. Motivated by the graph structures in Figure~\ref{fig:3-complete}, we extend the analysis to a general setting with arbitrary $n$.
\end{remark}

\begin{example}[Algorithms based on ring and sequential graphs]
\label{eg:ring_seq}
We consider $G=(\mathcal{V},\mathcal{E})$ to be a weighted ring graph with $n$ nodes and $q=n$ edges. Specifically, $\mathcal{E}=\{\{v_i,v_{i+1}\}: i\in\{1,\dots,n-1\}\} \cup \{\{v_1,v_n\}\}$, and for the weight matrix $W =(w_{ij})\in\mathbb{R}^{n\times n}$,
\begin{align*}
w_{ij} = w_{ji} = 0 \text{~~unless~~} (i,j)\in \{(i,i+1): i\in\{1,\dots,n-1\}\}\cup \{(1,n)\}.
\end{align*}
Then, by \eqref{eq:N} and \eqref{eq:D}, $N=(N_{ij})\in\mathbb{R}^{n\times n}$ and  $D=\diag(\delta_1,\dots,\delta_n)$ with
    \begin{align*}
    N_{ij}=
        \begin{cases}
            w_{ji}  &~\text{if }j\in\{1,\dots,n-1\},\ i=j+1,\\
            w_{1n} &~\text{if } i=n,\ j=1,\\
            0 & \text{~otherwise}
        \end{cases}
    \text{and }  
        \delta_i=\frac{1}{2}
        \begin{cases}
        w_{12}+w_{1n} &~\text{if } i=1,\\
        w_{i-1,i} + w_{i,i+1} &~\text{if }  i\in\{2,\dots,n-1\},\\
        w_{1n} + w_{n-1,n} &~\text{if } i=n.
         \end{cases}
    \end{align*}    

    \begin{figure}[ht!]
    \captionsetup{skip=0pt}
        \centering
        \begin{tikzpicture}[
            node distance=1.5cm,
        ]
    
        \node[draw, circle, minimum size=1cm] (1) {$v_1$};
        \node[draw, circle, minimum size=1cm,right=of 1] (2) {$v_2$};
        \node[draw, circle, minimum size=1cm,right=of 2] (3) {$v_3$};
        \node[draw, circle, minimum size=1cm,right=of 3] (dots) {$\dots$};
        \node[draw, circle, minimum size=1cm,right=of dots] (n-1) {$v_{n-1}$};
        \node[draw, circle, minimum size=1cm,right=of n-1] (n) {$v_n$};
    
        \draw[->] (1) edge 
        node[midway, above] {$e_1$}
        node[midway, below] {$\mu_{12}^2$}
        (2);
        \draw[->] (2) edge 
        node[midway, above] {$e_2$} 
        node[midway, below] {$\mu_{23}^2$}
        (3);
        \draw[->] (3) edge 
        node[midway, above] {$\dots$} 
        (dots);
        \draw[->] (dots) edge 
        node[midway, above] {$\dots$} 
        (n-1);
        \draw[->] (n-1) edge 
        node[midway, above] {$e_{n-1}$} 
        node[midway, below] {$\mu_{n-1,n}^2$}
        (n);
        \end{tikzpicture}
        \caption{A weighted sequential graph}
        \label{fig:sequential}
    \end{figure}
    Let $G'=(\mathcal{V},\mathcal{E'})$, where $\mathcal{E'}=\{e_i=\{v_i,v_{i+1}\} : i\in\{1,\dots,n-1\}\}$, be a weighted sequential graph. Then $m=|\mathcal{E'}|=n-1$. The weight matrix of $W'=(w'_{ij})\in\mathbb{R}^{n\times n}$ is given by
    \begin{align*}
        w'_{ij} = w'_{ji} = 
        \begin{cases}
            \mu_{ij}^2 \leq w_{ij} &~\text{if } i\in\{1,\dots,n-1\},\ j=i+1,\\
            0 &~\text{otherwise}.
        \end{cases}
    \end{align*}
    We number the edges of $G'$ as in Figure~\ref{fig:sequential}. By \eqref{eq:Inc} and \eqref{eq:M}, $M =(M_{ij})\in\mathbb{R}^{n\times (n-1)}$ with
    \begin{align}
        M_{ij} =
        \begin{cases}
            \mu_{ih} &~\text{if }i\in\{1,\dots,n-1\},\ h=i+1,\ j=i,\\ 
            -\mu_{hi} &~\text{if }i\in\{2,\dots,n\},\ h=i-1,\ j=i-1,\\ 
            0 &\text{~otherwise}.
        \end{cases}
        \label{formula:sequential}
    \end{align}

    \emph{Case~1}: $p=n-1$ and $B_1,\dots,B_{n-1}$ are cocoercive. Let $P$ and $R$ be as in \eqref{eq:PQR1}, and let $Q=0$. Then Algorithm~\ref{algo:full} becomes
    \begin{align}
    \label{algo:seq_FB}
        \begin{cases}
        x_1^k &= J_{\frac{\gamma}{\delta_1} A_1}\left(\frac{1}{\delta_1}\mu_{12} z_{1}^k\right), \\
        x_i^k &= J_{\frac{\gamma}{\delta_i} A_i}\left(\frac{1}{\delta_i}\Big(\mu_{i,i+1}z_i^k - \mu_{i-1,i} z_{i-1}^k + w_{i-1,i}x_{i-1}^k - \gamma B_{i-1}x_{i-1}^k \Big)\right),\ i\in\{2,\dots, n-1\},\\
        x_n^k &= J_{\frac{\gamma}{\delta_n} A_n}\left(\frac{1}{\delta_n}\Big(-\mu_{n-1,n}z_{n-1}^k + w_{1n} x_1^k + w_{n-1,n}x_{n-1}^k -\gamma B_{n-1} x_{n-1}^k\Big)\right),\\
        z_i^{k+1} \!\!\!\!\!&= z_i^k - \lambda_k \mu_{i,i+1}(x_i^k-x_{i+1}^k),\quad i\in\{1,\dots, n-1\},
        \end{cases}
    \end{align}
    which we call the \emph{weighted sequential forward-backward algorithm}. If $\mu_{i,i+1} =w_{i,i+1} =1$ for $i\in\{1,\dots, n-1\}$ and $w_{1n}=1$, then \eqref{algo:seq_FB} becomes the \emph{forward-backward method} for ring networks in \cite{AMTT23}. Especially, if $w_{1n} =0$ (i.e., both $G$ and $G'$ are weighted sequential graphs as in Figure~\ref{fig:sequential}), then we refer to the algorithm as the \emph{weighted sequential forward-Douglas--Rachford algorithm}, which reduces to the one in \cite[Equation (3.15)]{BCLN22} when $w_{i,i+1}=2$ and $\mu_{i,i+1}=1$ for $i\in\{1,\dots, n-1\}$.
    
    \emph{Case~2}: $p=n-2$ and $B_1,\dots,B_{n-2}$ are monotone and Lipschitz continuous. 
    We choose $P$, $Q$, and $R$ as in \eqref{eq:PQR1}. Then $P^{\top}-R = UM^{\top}$ and $P^{\top}-Q^{\top}= KM^{\top}$, where
\begin{align*}
    U = \left[ 
            \begin{array}{c|c} 
              -\diag(\frac{1}{\mu_{12}}, \frac{1}{\mu_{23}}, \dots, \frac{1}{\mu_{n-2,n-1}}) & 0_{(n-2)\times 1}
            \end{array} 
        \right], \, 
    K = \left[
            \begin{array}{c|c}
               0_{(n-2)\times 1}  & \diag(\frac{1}{\mu_{23}}, \frac{1}{\mu_{34}},\dots, \frac{1}{\mu_{n-1,n}})
            \end{array}
        \right].
\end{align*}
It follows that $\tau=\|U\|^2 + \|K\|^2 = \max_{1\leq i\leq n-2} \frac{1}{\mu_{i,i+1}^2} + \max_{2\leq i\leq n-1} \frac{1}{\mu_{i,i+1}^2}$.
In this case, Algorithm~\ref{algo:full} simplifies to
    \begin{align}
    \begin{cases}
    \label{algo:seq_FRB}
        x_1^k &= J_{\frac{\gamma}{\delta_1} A_1}\left(\frac{1}{\delta_1}\mu_{12} z_1^k\right), \\
        x_2^k &= J_{\frac{\gamma}{\delta_2} A_2}\left(\frac{1}{\delta_2}\Big(\mu_{23} z_2^k - \mu_{12}z_1^k + w_{12}x_1^k - \gamma B_1 x_1^k\Big)\right),\\
        x_i^k &= J_{\frac{\gamma}{\delta_i} A_i}\Bigg(\frac{1}{\delta_i}\Big(\mu_{i,i+1}z_i^k - \mu_{i-1,i}z_{i-1}^k + w_{i-1,i}x_{i-1}^k - \gamma (B_{i-1} x_{i-1}^k-B_{i-2}x_{i-2}^k)\\
        & \qquad\qquad\qquad-\gamma B_{i-2} x_{i-1}^k\Big)\Bigg), \quad i\in\{3,\dots, n-1\},\\
        x_n^k &= J_{\frac{\gamma}{\delta_n} A_n}\left(\frac{1}{\delta_n}\Big(-\mu_{n-1,n}z_{n-1}^k + w_{1n} x_1^k + w_{n-1,n}x_{n-1}^k +\gamma B_{n-2} x_{n-2}^k -\gamma B_{n-2} x_{n-1}^k\Big)\right),\\
        z_i^{k+1} \!\!\!\!\!&= z_i^k - \lambda_k \mu_{i,i+1}(x_i^k-x_{i+1}^k), \quad i\in\{1,\dots, n-1\},
    \end{cases}
    \end{align}
which is termed the \emph{weighted sequential forward-reflected-backward algorithm}. This, in turn, is the \emph{forward-reflected-backward method} in \cite{AMTT23} if $\mu_{i,i+1} =w_{i,i+1} =1$ for $i\in\{1,\dots, n-1\}$ and $w_{1n}=1$. 
\end{example}

\begin{example}[Algorithms based on star graphs]
\label{eg:star}
We consider $G =(\mathcal{V},\mathcal{E})$ to be a weighted star graph with $n$ nodes and $q=n-1$ edges. Let $G' =(\mathcal{V},\mathcal{E})$ be the same underlying graph with possibly different weights.

\emph{Case~1}: $v_1$ is the central node of $G$, i.e., $\mathcal{E} =\{\{v_1,v_j\}: j\in\{2,\dots,n\}\}$. Then the weight matrix $W =(w_{ij})\in\mathbb{R}^{n\times n}$ satisfies
\begin{align*}
w_{ij} = w_{ji} = 0 \text{~~unless~~} i=1,\ j\in\{2,\dots,n\}.
\end{align*}
In view of \eqref{eq:N} and \eqref{eq:D}, $N=(N_{ij})\in\mathbb{R}^{n\times n}$ and $D=\diag(\delta_1,\dots,\delta_n)\in \mathbb{R}^{n\times n}$ with
    \begin{align*}
    N_{ij}=
        \begin{cases}
            w_{ji} &~\text{if }j=1,\ i\in\{2,\dots,n\}, \\
            0 & \text{~otherwise},
        \end{cases}
    \text{ and } 
    \delta_i = \frac{1}{2}
    \begin{cases}
        \sum_{j=2}^n w_{1j}&~\text{if }i=1,\\
        w_{1i}&~\text{if }i\in\{2,\dots,n\}.
    \end{cases}
    \end{align*}    
We number the edges of $G'$ following an ordering as illustrated in Figure~\ref{fig:star1}. 
\begin{figure}[ht!]
\captionsetup{skip=0pt}
    \begin{center}
    \begin{subfigure}{0.45\textwidth}
    \centering
    \begin{tikzpicture}    
    \def\n{7}  
    \node[draw, circle, minimum size=1cm] (1) at (0,0) {$v_1$};

    \foreach \i in {2,3,4,5} {
        \node[draw, circle, minimum size=1cm] (\i) at ({90 - 360/(\n-1) * (\i-2)}:2.5cm) {$v_{\i}$};
        \pgfmathtruncatemacro{\k}{\i-1} 
        \draw[->] (1) -- 
        node[sloped, above] {$e_{\k}$} 
        node[sloped, below] {$\mu_{1\i}^2$}
        (\i);
    }

    \node[draw, circle, minimum size=1cm] (dots) at ({90 - 360/(\n-1) * (6-2)}:2.5cm) {$\dots$};
    \draw[->] (1) -- 
    node[sloped, above] {$\dots$} 
    (dots);

    \node[draw, circle, minimum size=1cm] (n) at ({90 - 360/(\n-1) * (7-2)}:2.5cm) {$v_n$};
    \draw[->] (1) -- 
    node[sloped, above] {$e_{n-1}$} 
    node[sloped, below] {$\mu_{1n}^2$}
    (n);
    \end{tikzpicture}
    \caption{First-centered star graph}
    \label{fig:star1}
    \end{subfigure}
    \begin{subfigure}{0.45\textwidth}
    \centering
    \begin{tikzpicture}    
    \def\n{7}  
    \node[draw, circle, minimum size=1cm] (n) at (0,0) {$v_n$};

    \foreach \i in {1,2,3,4} {
        \node[draw, circle, minimum size=1cm] (\i) at ({90 - 360/(\n-1) * (\i-1)}:2.5cm) {$v_{\i}$};
        \pgfmathtruncatemacro{\k}{\i} 
        \draw[<-] (n) -- 
        node[sloped, above] {$e_{\k}$} 
        node[sloped, below] {$\mu_{\k n}^2$} 
        (\i);
    }

    \node[draw, circle, minimum size=1cm] (dots) at ({90 - 360/(\n-1) * (6-2)}:2.5cm) {$\dots$};
    \draw[<-] (n) -- node[sloped, above] {$\dots$} (dots);

    \node[draw, circle, minimum size=1cm] (1) at ({90 - 360/(\n-1) * (7-2)}:2.5cm) {$v_{n-1}$};
    \draw[<-] (n) -- 
    node[sloped, above] {$e_{n-1}$} 
    node[sloped, below] {$\mu_{n-1,n}^2$} 
    (1);

    \end{tikzpicture}
    \caption{Last-centered star graph}
    \label{fig:star2}
    \end{subfigure}
    \end{center}   
    \caption{Weighted star graphs with $n$ nodes}
\end{figure}
According to \eqref{eq:Inc} and \eqref{eq:M}, $M =(M_{ij})\in\mathbb{R}^{n\times (n-1)}$ with
    \begin{align*}
        M_{ij} =
        \begin{cases}
            \mu_{ih} &~\text{if }i=1,\ 2\leq h \leq n,\ j=h-1, \\
            -\mu_{hi} &~\text{if }i\in\{2,\dots,n\},\ h=1,\ j=i-1,\\ 
            0 &\text{~otherwise}.
        \end{cases}
    \end{align*}

If $p=n-1$ and $B_1,\dots,B_{n-1}$ are cocoercive, then, defining $P$ as in \eqref{eq:PQR1}, $R$ as in \eqref{eq:PQR2}, and $Q=0$, Algorithm~\ref{algo:full} is expressed as
\begin{align}
\label{algo:par_up_cocoercive}
\begin{cases}
    x_1^k &= J_{\frac{2\gamma}{\sum_{j=2}^{n}w_{1j}} A_1}\left(\frac{2}{\sum_{j=2}^{n}w_{1j}}\sum_{h=2}^{n}\mu_{1h} z_{h-1}^k\right),\\
    x_i^k &= J_{\frac{2\gamma}{w_{1i}} A_i}\left(\frac{2}{w_{1i}}\Big(w_{1i} x_1^k- \mu_{1i} z_{i-1}^k - \gamma B_{i-1} x_1^k\Big)\right),\quad i\in\{2,\dots, n\},\\
    z_i^{k+1} \!\!\!\!\!&= z_i^k - \lambda_k\mu_{1,i+1}(x_1^k - x_{i+1}^k),\quad i\in\{1,\dots, n-1\},
\end{cases}
\end{align}
which is referred to as the \emph{weighted parallel up forward-Douglas--Rachford algorithm}. This becomes the algorithm proposed in \cite[Equation (3.14)]{BCLN22} when $w_{1j}=2$ and $\mu_{1j}=1$ for $j\in\{2,\dots, n\}$. If $B_1 =\dots =B_{n-1} =0$, then the latter reduces to the \emph{parallel Douglas--Rachford algorithm} (see \cite[Theorem 5.1]{Cam22}, \cite[Section 9.1]{CKMH23}), which is also known as a product-space reformulation with reduced dimension for the Douglas--Rachford algorithm.
    
If $p=n-2$ and $B_1,\dots,B_{n-2}$ are monotone and Lipschitz continuous, then, choosing $P$ as in \eqref{eq:PQR1}, $Q$ and $R$ as in \eqref{eq:PQR2}, Algorithm~\ref{algo:full} takes the form
\begin{align}
\label{algo:par_up_Lipschitz}
\begin{cases}
    x_1^k &= J_{\frac{2\gamma}{\sum_{j=2}^{n}w_{1j}} A_1}\left(\frac{2}{\sum_{j=2}^{n}w_{1j}}\sum_{h=2}^{n}\mu_{1h} z_{h-1}^k\right),\\
    x_i^k &= J_{\frac{2\gamma}{w_{1i}} A_i}\left(\frac{2}{w_{1i}}\Big(w_{1i} x_1^k- \mu_{1i} z_{i-1}^k - \gamma B_{i-1} x_1^k\Big)\right),\quad i\in\{2,\dots, n-1\},\\
    x_n^k &= J_{\frac{2\gamma}{w_{1n}}A_n}\left(\frac{2}{w_{1n}}\left(w_{1n}x_1^k - \mu_{1n}z_{n-1}^k + \gamma\sum_{j=1}^{n-2} B_j x_1^k - \gamma\sum_{j=1}^{n-2}B_j x_{j+1}^k\right)\right) ,\\
    z_i^{k+1} \!\!\!\!\!&= z_i^k - \lambda_k\mu_{1,i+1}(x_1^k - x_{i+1}^k),\quad i\in\{1,\dots, n-1\},
\end{cases}
\end{align}
which we name the \emph{weighted parallel up forward-aggregated-Douglas--Rachford algorithm}. By definition, $P^{\top}-R = UM^{\top}$ and $P^{\top}-Q^{\top}= KM^{\top}$, where
\begin{align*}
    U &= \left[
        \begin{array}{c|c}
        -\diag\left(\frac{1}{\mu_{12}}, \frac{1}{\mu_{13}}, \dots, \frac{1}{\mu_{1,n-1}}\right) & 0_{(n-2) \times 1}
        \end{array} 
        \right] \text{~and} \\ 
    K &= \left[
        \begin{array}{c|c}
            -\diag\left(\frac{1}{\mu_{12}}, \frac{1}{\mu_{13}}, \dots, \frac{1}{\mu_{1,n-1}}\right)  & \left(\frac{1}{\mu_{1n}}\right)_{(n-2)\times 1}
        \end{array}
        \right].
\end{align*}
When $\mu_{1n}=\cdots=\mu_{n-1,n}=\mu$, $\tau$ can be calculated explicitly as $\tau=\|U\|^2 + \|K\|^2 = \frac{1}{\mu^2} + \frac{n-1}{\mu^2} = \frac{n}{\mu^2}$.

\emph{Case~2}: $v_n$ is the central node of $G$, i.e., $\mathcal{E} =\{\{v_i,v_n\}:i\in\{1,\dots,n-1\}\}$. In this case, the edges of $G'$ are numbered as in Figure~\ref{fig:star2}. By \eqref{eq:Inc} and \eqref{eq:M}, $M =(M_{ij})\in\mathbb{R}^{n\times (n-1)}$ with
\begin{align}
    \label{formula:starM}
    M_{ij} =
    \begin{cases}
        \mu_{ih} &~\text{if }i\in\{1,\dots,n-1\},\ h=n,\ j=i, \\
        -\mu_{hi} &~\text{if }i=n,\ 1\leq h\leq n-1,\ j=h,\\ 
        0 &\text{~otherwise}.
    \end{cases}
\end{align}
Here, we omit the details of $N$ and $D$, which can be readily deduced from the algorithm. 

If $p=n-1$ and $B_1,\dots,B_p$ are cocoercive, then, defining $P$ as in \eqref{eq:PQR2} and $R$ as in \eqref{eq:PQR1}, we obtain 
\begin{align}
\label{algo:par_down_cocoercive}
\begin{cases}
    x_i^k &= J_{\frac{2\gamma}{\omega_{in}} A_{i}}\left(\frac{2}{\omega_{in}}\mu_{in}z_{i}^k\right),\quad i\in\{1,\dots, n-1\},\\
    x_{n}^k &= J_{\frac{2\gamma}{\sum_{j=1}^{n-1}\omega_{jn}}A_n}\left(\frac{2}{\sum_{j=1}^{n-1}\omega_{jn}}\Big(\sum_{j=1}^{n-1}\omega_{jn}x_j^k - \sum_{h=1}^{n-1} \mu_{hn}z_h^k - \gamma\sum_{i=1}^{n-1} B_i x_i^k\Big)\right),\\
    z_i^{k+1} \!\!\!\!\!&= z_i^k - \lambda_k\mu_{in}(x_i^k-x_{n}^k),\quad i\in\{1,\dots, n-1\},
\end{cases}
\end{align}
which is termed the \emph{weighted parallel down forward-Douglas--Rachford algorithm}.

If $p=n-2$ and $B_1,\dots,B_p$ are monotone and Lipschitz continuous, then, taking $P$ as in \eqref{eq:PQR1}, $Q$ and $R$ as in \eqref{eq:PQR2}, Algorithm~\ref{algo:full} becomes
\begin{align}
\label{algo:par_down_Lipschitz}
\begin{cases}
    x_1^k &= J_{\frac{2\gamma}{w_{1n}} A_{1}}\left(\frac{2}{w_{1n}}\mu_{1n}z_{1}^k\right),\\
    x_i^k &= J_{\frac{2\gamma}{w_{in}} A_{i}}\left(\frac{2}{w_{in}}(\mu_{in}z_{i}^k -\gamma B_{i-1} x_1^k )\right),\quad i\in\{2,\dots, n-1\},\\
    x_{n}^k &= J_{\frac{2\gamma}{\sum_{j=1}^{n-1}w_{jn}}A_n}\Bigg(\frac{2}{\sum_{j=1}^{n-1}w_{jn}}\Big(\sum_{j=1}^{n-1}w_{jn}x_j^k - \sum_{j=1}^{n-1} \mu_{jn}z_j^k \\
    &\hspace{5cm} + \gamma\sum_{j=1}^{n-2} B_j x_1^k - \gamma\sum_{j=1}^{n-2} B_j x_{j+1}^k\Big)\Bigg)\\
    z_i^{k+1} \!\!\!\!\!&= z_i^k - \lambda_k\mu_{in}(x_i^k-x_{n}^k),\quad i\in\{1,\dots, n-1\},
\end{cases}
\end{align}
which we refer to as the \emph{weighted parallel down forward-aggregated-Douglas--Rachford algorithm}.
\end{example}

\begin{remark}[Weighted product-space formulations of the Davis--Yin algorithm]
\label{r:ps}
We consider problem \eqref{gen_prob} with $p=n$ and $B_1,\dots, B_n$ cocoercive. A common approach is the product-space technique, which reformulates the problem as the three-operator inclusion
\begin{align*}
\text{find~} \mathbf{x} =(x_1, \dots, x_n)\in \mathcal{H}^n \text{~such that~} 0\in N_{\Delta}(\mathbf{x}) +\mathbf{A}(\mathbf{x}) +\mathbf{B}(\mathbf{x}),
\end{align*}
where $N_{\Delta}$ denotes the \emph{normal cone} to the diagonal subspace $\Delta$. We observe that $J_{\gamma \mathbf{A}}(\mathbf{x}) =(J_{\gamma A_1}(x_1), \dots, J_{\gamma A_n}(x_n))$ and $J_{N_{\Delta}}(\mathbf{x}) =(\frac{1}{n}\sum_{i=1}^n x_i, \dots, \frac{1}{n}\sum_{i=1}^n x_i)$. Applying the Davis--Yin algorithm to the operator triples $(N_{\Delta}, \mathbf{A}, \mathbf{B})$ and $(\mathbf{A},N_{\Delta}, \mathbf{B})$ yields the product-space formulations
\begin{align}
\label{algo:ps}
    &\begin{cases}
        x_1^k &= \frac{1}{n}\sum_{i=1}^n z_i^k,\\
        x_i^k &= J_{\gamma A_{i-1}}\Big(2x_1^k - z_{i-1}^k - \gamma B_{i-1}x_1^k\Big), \quad i\in\{2,\dots,n+1\},\\
        z_i^{k+1} \!\!\!\!\!&= z_i^k - \lambda_k(x_1^k-x_{i+1}^k), \quad i\in\{1,\dots,n\},
    \end{cases}\\
    \label{algo:ps2}
    \text{and~}&\begin{cases}
        x_i^k &= J_{\gamma A_i}(z_i^k), \quad i\in\{1,\dots,n\},\\
        x_{n+1}^k \!\!\!\!\!&= \frac{2}{n}\sum_{i=1}^n x_i^k - \frac{1}{n}\sum_{i=1}^n z_i^k - \frac{\gamma}{n}\sum_{i=1}^n B_i x_i^k,\\
        z_i^{k+1} \!\!\!\!\!&= z_i^k - \lambda_k(x_i^k-x_{n+1}^k), \quad i\in\{1,\dots,n\}.
    \end{cases}
\end{align}

Interestingly, algorithms \eqref{algo:ps} and \eqref{algo:ps2} can be derived from \eqref{algo:par_up_cocoercive} and \eqref{algo:par_down_cocoercive}, respectively. Indeed, using an artificial operator $A_0 = 0$ and applying algorithm \eqref{algo:par_up_cocoercive} to problem \eqref{gen_prob} with $(n+1)$ maximally monotone operators $A_0, A_1,\dots,A_n$ and $n$ cocoercive operators $B_1,\dots,B_n$, we obtain 
\begin{align}
    \begin{cases}
    x_1^k &= \frac{2}{\sum_{j=2}^{n+1}w_{1j}}\sum_{h=2}^{n+1}\mu_{1h} z_{h-1}^k,\\
    x_i^k &= J_{\frac{2\gamma}{w_{1i}} A_{i-1}}\left(\frac{2}{w_{1i}}\Big(w_{1i} x_1^k- \mu_{1i} z_{i-1}^k - \gamma B_{i-1} x_1^k\Big)\right),\quad i\in\{2,\dots, n+1\},\\
    z_i^{k+1} \!\!\!\!\!&= z_i^k - \lambda_k\mu_{1,i+1}(x_1^k - x_{i+1}^k),\quad i\in\{1,\dots, n\},
    \end{cases}
\end{align}
which reduces to \eqref{algo:ps} if $w_{1i}=2$ and $\mu_{1i}=1$ for $i\in\{2,\dots,n+1\}$. Note that if $B_1 = \dots =B_n :=\frac{1}{n}B$, then the latter becomes the \emph{generalized forward-backward algorithm} \cite{RFP13}, which recovers a product-space formulation of the Douglas--Rachford algorithm when $B=0$.

Similarly, introducing $A_{n+1}=0$ and applying \eqref{algo:par_down_cocoercive} to problem \eqref{gen_prob} with $(n+1)$ maximally monotone operators $A_1,\dots,A_n,A_{n+1}$ and $n$ cocoercive operators $B_1,\dots,B_n$, we derive the algorithm 
\begin{align}
\begin{cases}
    x_i^k &= J_{\frac{2\gamma}{w_{i,n+1}} A_{i}}\left(\frac{2}{w_{i,n+1}}\mu_{i,n+1}z_{i}^k\right),\quad i\in\{1,\dots, n\},\\
    x_{n+1}^k \!\!\!\!\!&= \frac{2}{\sum_{j=1}^{n}w_{j,n+1}}\Big(\sum_{j=1}^{n}w_{j,n+1}x_j^k - \sum_{h=1}^{n} \mu_{h,n+1}z_h^k - \gamma\sum_{i=1}^n B_i x_i^k\Big),\\
    z_i^{k+1} \!\!\!\!\!&= z_i^k - \lambda_k\mu_{i,n+1}(x_i^k-x_{n+1}^k),\quad i\in\{1,\dots, n\},
\end{cases}
\end{align}
which simplifies to \eqref{algo:ps2} if $w_{i,n+1}=2$ and $\mu_{i,n+1}=1$ for $i\in\{1,\dots,n\}$. Furthermore, if $B_1 =\dots =B_n =0$ for $i\in\{1,\dots,n\}$, then we obtain another product-space formulation of the Douglas--Rachford algorithm \cite[Proposition 26.12]{BC17}.
\end{remark}

\begin{example}[Algorithms based on complete and star graphs]
\label{eg:complete}
    Consider $G$ to be a weighted complete graph with $n$ nodes. Then $N=(N_{ij})\in\mathbb{R}^{n\times n}$ and $D=\diag(\delta_1,\dots,\delta_n)\in\mathbb{R}^{n\times n}$ with
    \begin{align*}
        N_{ij}=
        \begin{cases}
            w_{ji} &~\text{if } i>j,\\
            0   &~\text{otherwise}
        \end{cases}
        \quad \text{and} \quad
        \delta_i=\frac{1}{2}\Big(\sum_{j=i+1}^n w_{ij}+\sum_{j=1}^{i-1}w_{ji}\Big).
    \end{align*}
    Next, let $P\in \mathbb{R}^{n\times p}$ and $R\in \mathbb{R}^{p\times n}$ be any matrices satisfying Assumption~\ref{a:stand}\ref{a:stand_P}--\ref{a:stand_R}, and let $Q\in \mathbb{R}^{n\times p}$ be a matrix such that $Q =0$ if $B_1$, \dots, $B_p$ are cocoercive, and $Q^{\top}\bOne=\bOne $ if $B_1$, \dots, $B_p$ are monotone and Lipschitz continuous. Some simple selections of $P$, $Q$, and $R$ can be found in Remark~\ref{r:PQR}. For convenience, set $\mathbf{u}^k =(u_1^k, \dots, u_n^k) :=(P -Q)\mathbf{B}(R\mathbf{x}^k) -Q\mathbf{B}(P^{\top}\mathbf{x}^k)$.

    \emph{Case~1}: $G'=(\mathcal{V},\mathcal{E'})$ is also the complete graph $G$ with possibly different weights. Then $m=|\mathcal{E'}|=\frac{n(n-1)}{2}$. We number the edges of $G'$ following an ordering as illustrated in Figure~\ref{fig:complete}.
    \begin{figure}[ht]
    \captionsetup{skip=0pt}
        \begin{center}
        \begin{tikzpicture}
        \def\n{7}  
        \def\radius{3cm}  
\tikzset{every edge quotes/.style={fill=white, font=\footnotesize, inner sep=1pt}}

\foreach \i in {1,...,\n} {
    \node[draw, circle, minimum size=1cm] (\i) at ({90-360/\n * (\i-1)}:\radius) {$v_{\i}$};
}
        \draw[->] (1) edge["$e_1$", pos=0.4] (2);
        \draw[->] (1) edge["$e_2$", pos=0.15] (3);
        \draw[->] (1) edge["$e_3$", pos=0.1] (4);
        \draw[->] (1) edge["$e_4$", pos=0.1] (5);
        \draw[->] (1) edge["$e_5$", pos=0.15] (6);
        \draw[->] (1) edge["$e_6$", pos=0.4] (7);
        \draw[->] (2) edge["$e_7$", pos=0.4] (3);
        \draw[->] (2) edge["$e_8$", pos=0.15] (4);
        \draw[->] (2) edge["$e_9$", pos=0.1] (5);
        \draw[->] (2) edge["$e_{10}$", pos=0.1] (6);
        \draw[->] (2) edge["$e_{11}$", pos=0.1] (7);
        \draw[->] (3) edge["$e_{12}$", pos=0.4] (4);
        \draw[->] (3) edge["$e_{13}$", pos=0.15] (5);
        \draw[->] (3) edge["$e_{14}$", pos=0.15] (6);
        \draw[->] (3) edge["$e_{15}$", pos=0.1] (7);
        \draw[->] (4) edge["$e_{16}$", pos=0.4] (5);
        \draw[->] (4) edge["$e_{17}$", pos=0.15] (6);
        \draw[->] (4) edge["$e_{18}$", pos=0.1] (7);
        \draw[->] (5) edge["$e_{19}$", pos=0.4] (6);
        \draw[->] (5) edge["$e_{20}$", pos=0.15] (7);
        \draw[->] (6) edge["$e_{21}$", pos=0.4] (7);
        \end{tikzpicture}
        \end{center}
        \caption{A complete graph $G'$ of 7 nodes}
        \label{fig:complete}
    \end{figure}
    By a direct computation using \eqref{eq:Inc} and \eqref{eq:M}, $M =(M_{ij})\in\mathbb{R}^{n\times\frac{n(n-1)}{2}}$ is determined by
    \begin{align*}
        M_{ij} =
        \begin{cases}
            \mu_{ih} &~\text{if }i\in\{1,\dots,n-1\},\ i+1 \leq h \leq n,\ j=s(i)+h-i,\\ 
            -\mu_{hi} &~\text{if }i\in\{2,\dots,n\},\ 1 \leq h \leq i-1,\ j=s(h)-h+i, \\ 
            0 &\text{~otherwise},
        \end{cases}
    \end{align*}
    where $s(i)=\frac{(i-1)(2n-i)}{2}$. Algorithm~\ref{algo:full} takes the form
    \begin{align}
    \begin{cases}
        x_1^k &= J_{\frac{\gamma}{\delta_1}A_1}\left(\frac{1}{\delta_1}\Big(\sum_{h=2}^{n}\mu_{1h} z_{h-1}^k\Big)\right),\\
        x_i^k &= J_{\frac{\gamma}{\delta_i}A_i}\left(\frac{1}{\delta_i}\Big(\sum_{h=i+1}^{n} \mu_{ih} z_{s(i)+h-i}^k - \sum_{h=1}^{i-1} \mu_{hi} z_{s(h)-h+i}^k +\sum_{j=1}^{i-1}w_{ji}x_j^k -\gamma u_i^k\Big)\right),\\
        &\qquad\qquad\qquad i\in\{2,\dots, n-1\},\\
        x_n^k &= J_{\frac{\gamma}{\delta_n}A_n}\left(\frac{1}{\delta_n}\Big(-\sum_{h=1}^{n-1}\mu_{hn}z_{s(h)-h+n}^k + \sum_{j=1}^{n-1}w_{jn}x_j^k - \gamma u_n^k \Big)\right),\\
        z_h^{k+1} \!\!\!\!\!&= z_h^k - \lambda_k \mu_{i,-s(i)+h+i}\Big(x_i^k - x_{-s(i)+h+i}^k\Big), \quad i\in\{1,\dots,n-1\} \text{ and } s(i) < h \leq s(i+1).
    \end{cases}
    \end{align}
    While the latter seems complicated, Remark~\ref{r:OnAlgo}\ref{r:OnAlgo_reduced} provides a simplification by transforming $\mathbf{z}^k\in\mathcal{H}^{\frac{n(n-1)}{2}}$ into $\mathbf{v}^k= M\mathbf{z}^k \in\mathcal{H}^{n}$, thereby reducing the dimension from $\frac{n(n-1)}{2}$ to $n$ and yielding
    \begin{align*}
    \begin{cases}
    \mathbf{x}^k &=J_{\gamma D^{-1}\mathbf{A}}( D^{-1}(\mathbf{v}^k +  N\mathbf{x}^k -\gamma \mathbf{u}^k)), \\
    \mathbf{v}^{k+1} \!\!\!\!\!&=\mathbf{v}^k - \lambda_k M M^{\top}\mathbf{x}^k.
    \end{cases}
    \end{align*}
    By definition, $\Deg(G')=\diag(d_1',\dots, d_n')$ with $d_i'=\sum_{j=1}^n w_{ij}'=\sum_{j=i+1}^n \mu_{ij}^2 + \sum_{j=1}^{i-1} \mu_{ji}^2$, and so
    \begin{align*}
    MM^{\top} = \Lap(G') = \Deg(G')-W'
        =\begin{bmatrix}
            d_1' & -\mu_{12}^2 & -\mu_{13}^2 & \dots & -\mu_{1,n-1}^2 & -\mu_{1n}^2\\
            -\mu_{12}^2 & d_2' & -\mu_{23}^2 & \dots & -\mu_{2,n-1}^2 & -\mu_{2n}^2\\
            -\mu_{13}^2 & -\mu_{23}^2 & d_3' & \dots & -\mu_{3,n-1}^2 & -\mu_{3n}^2\\
            \vdots & \vdots & \vdots & \ddots & \vdots & \vdots\\
            -\mu_{1,n-1}^2 & -\mu_{2,n-1}^2 & -\mu_{3,n-1}^2 & \dots & d_{n-1}' & -\mu_{n-1,n}^2\\
            -\mu_{1n}^2 & -\mu_{2n}^2 & -\mu_{3n}^2 & \dots & -\mu_{n-1,n}^2 & d_n'
        \end{bmatrix}.
    \end{align*}
    Algorithm~\ref{algo:full} is then rewritten as
    \begin{align}
    \label{algo:complete}
    \begin{cases}
        x_1^k &= J_{\frac{\gamma}{\delta_1}A_1}\left(\frac{1}{\delta_1}v_1^k\right), \\
        x_i^k &= J_{\frac{\gamma}{\delta_i}A_i}\left(\frac{1}{\delta_i}\Big(v_{i}^k + \sum_{j=1}^{i-1}w_{ji}x_j^k - \gamma u_i^k\Big)\right),\quad i\in\{2,\dots, n\},\\
        v_i^{k+1}\!\!\!\!\! &= v_i^k - \lambda_k\left(d_i' x_i^k - \Big(\sum_{j=i+1}^n \mu_{ij}^2 x_j^k + \sum_{j=1}^{i-1} \mu_{ji}^2 x_j^k \Big)\right),\quad i\in\{1,\dots,n-1\},\\
        v_n^{k+1}\!\!\!\!\! &= v_n^k - \lambda_k\Big(d_n' x_n^k - \sum_{j=1}^{n-1}\mu_{jn}^2 x_j^k\Big).
    \end{cases}
    \end{align}
    
\emph{Case~2}: \label{eg:Ryu}
$G'=(\mathcal{V},\mathcal{E'})$ is a weighted star graph with $\mathcal{E'} =\{\{v_i,v_n\}:i\in\{1,\dots,n-1\}\}$, as in \emph{Case~2} of Example~\ref{eg:star}. The matrix $M$ is then given by \eqref{formula:starM} and Algorithm~\ref{algo:full} becomes 
\begin{align}
\label{algo:complete_star}
\begin{cases}
    x_1^k &= J_{\frac{\gamma}{\delta_1}A_1}\left(\frac{1}{\delta_1}\mu_{1n}z_1^k\right),\\
    x_i^k &= J_{\frac{\gamma}{\delta_i}A_i}\left(\frac{1}{\delta_i}\Big(\sum_{j=1}^{i-1}w_{ji}x_j^k + \mu_{in} z_i^k - \gamma u_i^k\Big)\right),\quad i\in\{2,\dots, n-1\},\\
    x_n^k &= J_{\frac{\gamma}{\delta_n }A_n}\left(\frac{1}{\delta_n}\Big(\sum_{j=1}^{n-1}w_{jn}x_j^k - \sum_{j=1}^{n-1}\mu_{jn} z_j^k-\gamma u_n^k\Big)\right), \\
    z_i^{k+1} \!\!\!\!\!&= z_i^k - \lambda_k\mu_{in}(x_i^k-x_{n}^k), \quad i\in\{1,\dots, n-1\},
\end{cases}
\end{align}
which can be viewed as a generalization of the Ryu splitting to the case of $n$ set-valued and $p$ single-valued operators. Indeed, it reduces to this scheme for $n$ set-valued operators (see \cite[Example 3.4]{Tam23}, \cite[Section 3.1]{BCN24}) when $p =0$, $w_{ij}=2$ for $i\neq j$, and $\mu_{in}=1$ for $i\in\{1,\dots, n-1\}$. 
\end{example}

\section{Numerical experiments}
\label{s:num_exp}
In this section, we provide numerical examples to test various choices of the coefficient matrices of Algorithm~\ref{algo:full}, specialized to graph-based algorithms in Section~\ref{s:graphs}. We also examine the influence of the graph structure, step size, and relaxation parameter on algorithm performance.

\subsection{Experiments with cocoercivity}
We first consider the problem from \cite[Section 5]{ACL24}, which takes the form
\begin{align}
\label{min_prob}
    \min_{x\in\mathbb{R}^{d}} \sum_{j=1}^{n-1}\left( \frac{1}{2} x^{\top} Q_j x\right) \text{~~subject to~~} x \in \bigcap_{i=1}^n C_i,
\end{align}
where $Q_j\in \mathbb{R}^{d\times d}$ is positive semidefinite for $j\in\{1,\dots,n-1\}$ and $C_i := \{ x\in\mathbb{R}^{d} \,|\, \|x - c_i\| \leq r_i\}$ for $i\in\{1,\dots,n\}$ have a common intersection point in the interior. This problem then can be formulated as an inclusion problem \eqref{gen_prob} as
\begin{align}
\label{num_prob}
    \text{find } x \in \mathbb{R}^{d} \text{~such that~} 0 \in \sum_{i=1}^n N_{C_i}x + \sum_{j=1}^{n-1} Q_jx,
\end{align}
where $N_{C_i}$, the normal cone to $C_i$, is maximally monotone, and $Q_j$ is $\frac{1}{\|Q_j\|_2}$-cocoercive. 

\paragraph{Data generated.} Random instances of problem \eqref{num_prob} are generated as in \cite[Section 5.1]{ACL24}, ensuring consistent problems with a nonempty feasible set that excludes the origin, which is the minimizer of the corresponding unconstrained problem. For each instance, random initial points for the algorithms are chosen outside the feasible set.

For comparison, we use the following methods: the \emph{weighted sequential forward-backward (FB) algorithm} \eqref{algo:seq_FB}, the \emph{weighted parallel up forward-Douglas--Rachford (FDR) algorithm} \eqref{algo:par_up_cocoercive}, the \emph{weighted parallel down forward-Douglas--Rachford (FDR) algorithm} \eqref{algo:par_down_cocoercive}, the \emph{complete graph algorithm with forward terms} \eqref{algo:complete}, and the \emph{complete-star graph algorithm with forward terms} \eqref{algo:complete_star}. For the last two methods, we test two sets of $P$ and $R$: one with $P$ and $R$ given by \eqref{eq:PQR1} and another with $P$ as in \eqref{eq:PQR1} and $R$ as in \eqref{eq:PQR2}, yielding a total of seven algorithms.

For simplicity, all underlying graphs are chosen with unit weights, and we examine the effect of varying the parameters $\gamma$ and $\lambda_k$. Algorithm performance is measured using the relative error 
\begin{align}\label{eq:relative error}
\max_{i\in\{1,\dots,n\}}\frac{\|x_i^k - x^*\|}{\|x^*\|}.
\end{align}
First, we select $\gamma = \hat{\gamma}\gamma_{\max}$, where $\hat{\gamma}\in(0,1)$ is a scaling factor and $\gamma_{\max} = \frac{2}{\ell\tau}$, with the cocoercive constant $\frac{1}{\ell}:=\min\left\{\frac{1}{\|Q_1\|_2}, \dots, \frac{1}{\|Q_{n-1}\|_2}\right\}$. Next, we chose another scaling factor $\hat{\lambda}\in(0,1)$ in which $\lambda_k = \lambda = \hat{\lambda}\lambda_{\max}$, where $\lambda_{\max} = \frac{2-\gamma\ell\tau}{2}$. 

For the first test, since $\lambda_{\max}$ depends on the choice of $\gamma$, we vary both parameters simultaneously to examine their combined effects. The results for the case when $n=50$ and $d=100$ of the complete graph 1 algorithm and the complete graph-star graph 1 algorithm are shown in Figure~\ref{fig:gamma_lambda}. 
\begin{figure}[htbp]
\captionsetup{skip=0pt}
    \centering
    \begin{subfigure}{0.48\textwidth}
        \includegraphics[width=\linewidth]{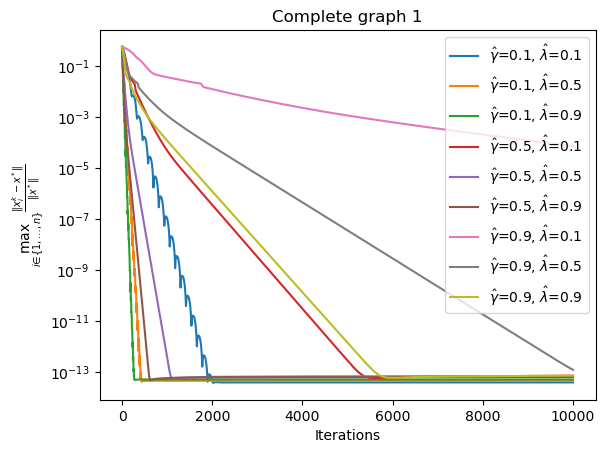}
    \end{subfigure}
    \begin{subfigure}{0.48\textwidth}
        \includegraphics[width=\linewidth]{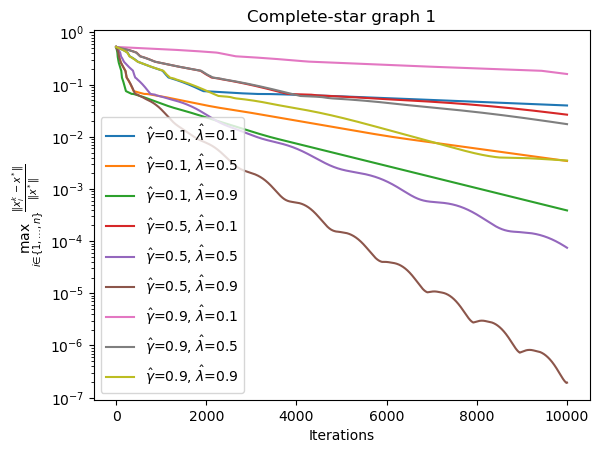}
    \end{subfigure}
    \caption{Impact of varying $\gamma$ and $\lambda_k$ on the performance.}
    \label{fig:gamma_lambda}
\end{figure}
The plots suggest that, while the complete-star graph 1 algorithm converges faster with $\gamma=0.5$, the complete graph 1 algorithm converges faster with smaller values of $\gamma$ in this scenario. However, they both perform better as $\lambda_k$ increase. We observe similar behavior for the complete-star graph 2 and complete graph 2 algorithms compared with the complete-star graph 1 algorithm, while the remaining algorithms behave similarly to the complete graph 1 algorithm. It is important to emphasize, however, that the optimal parameter values may vary depending on the problem setting.

In the second test, we evaluate the performance of several algorithms whose coefficient matrices are derived from different graph structures. For a fair comparison, we test feasible values of $\gamma$ and $\lambda_k$ and select those that give the best performance for each algorithm. The results for the cases where $n=50, d=100$ and where $n=100, d=100$ are reported in Figure~\ref{fig:cocoercive_comparison}.
\begin{figure}[htbp]
\captionsetup{skip=0pt}
    \centering
    \begin{subfigure}{0.48\textwidth}
        \includegraphics[width=\linewidth]{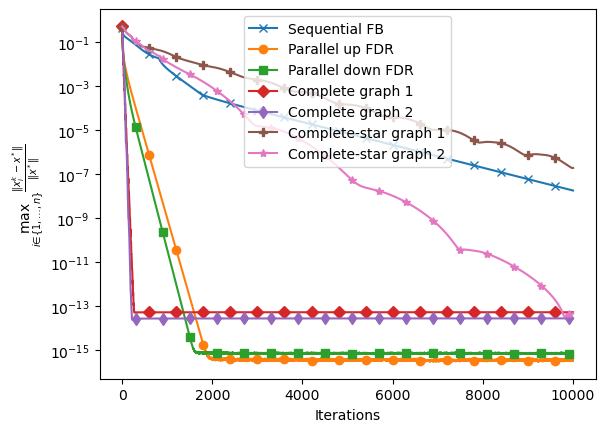}
    \end{subfigure}
    \begin{subfigure}{0.48\textwidth}
        \includegraphics[width=\linewidth]{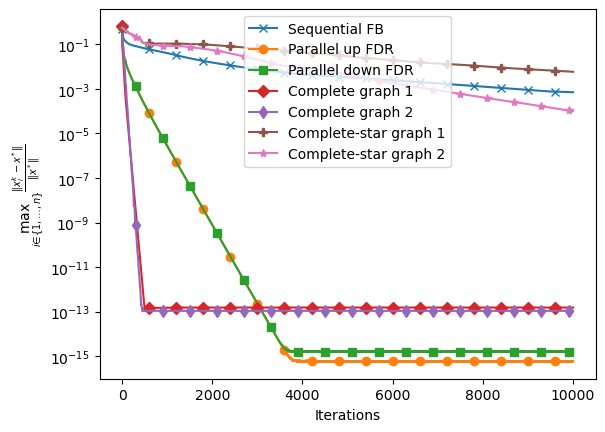}
    \end{subfigure}
    \caption{$n=50, d=100$ (left) and $n=100, d=100$ (right)}
    \label{fig:cocoercive_comparison}
\end{figure}
The results show that the complete graph 1 and 2 algorithms behave similarly and achieve the fastest convergence in terms of iteration count, although they require more runtime. The parallel up FDR and parallel down FDR algorithm also exhibit similar performance, forming the second group and ranking just behind the complete graph algorithms. The last group, which includes the sequential FB and the complete-star graph algorithms, performs not very well when $n=100$ and $d=100$. From Figure~\ref{fig:cocoercive_comparison}, one can also observe a large difference between the performance of the complete-star 1 and 2 algorithms when $n=50$, whereas the difference becomes less clear when $n=100$.

\subsection{Experiments without cocoercivity}
In the second numerical experiment, we study a zero-sum matrix game  between two teams, each consisting of $p$ players. The setting is the same as in \cite[Section 4.2]{TTZ25}. Team one has the payoff matrix $\Theta=\sum_{i=1}^p \Theta_i \in\mathbb{R}^{d_2 \times d_1}$, while team two has payoff matrix $-\Theta$. Each player in team one is paired with a player from team two, and pair $i$ is associated with the payoff matrices $(\Theta_i, -\Theta_i)$. The \emph{Nash equilibria} of this game correspond to the saddle points of the following min-max problem 
\begin{align}
    \label{prob:zero-sum}
    \min_{u\in\Delta^{d_1}}\max_{v\in\Delta^{d_2}} \sum_{j=1}^p \langle \Theta_j u, v \rangle \Longleftrightarrow  \min_{u\in\mathbb{R}^{d_1}}\max_{v\in\mathbb{R}^{d_2}} \sum_{i=1}^n  \iota_{\Delta^{d_1}}(u) + \sum_{j=1}^p \langle \Theta_j u, v \rangle - \sum_{i=1}^n  \iota_{\Delta^{d_2}}(v) 
\end{align}
where $\Delta^{d_1}=\{u=(u_1,\dots,u_{d_1})^\top \in [0, +\infty)^{d_1} \, | \, \sum_{j=1}^{d_1} u_j = 1\}$ is the unit simplex and $\Delta^{d_2}$ is defined similarly. In this setting, a saddle point is guaranteed to exist, and the subdifferential sum rule holds. Using the first-order optimality condition, solutions of the problem \eqref{prob:zero-sum} can be characterized as solutions of \eqref{gen_prob} with 
\begin{align}
    x =
    \begin{bmatrix}
        u\\
        v
    \end{bmatrix} \in \mathcal{H} = \mathbb{R}^{d_1} \times \mathbb{R}^{d_2}, \quad
    A_i = 
    \begin{bmatrix}
        N_{\Delta^{d_1}} & 0\\
        0 & N_{\Delta^{d_2}}
    \end{bmatrix}, \quad
    B_j = 
    \begin{bmatrix}
        0 & \Theta_j^{\top}\\
        -\Theta_j & 0
    \end{bmatrix}.
\end{align}
In the numerical experiments, we set $n=p+2$, $d_1=d_2=d$, and  choose the matrices $\Theta_j$ to be $M$-matrix, i.e.,
\begin{align}
    \Theta_j = s_j\Id - K_j, \quad s_j > \|K_j\|_2, \quad K_j > 0
\end{align}
where $K_j>0$ denotes a matrix with positive entries. This choice of matrices $\Theta_1,\dots,\Theta_p$ yields a \emph{completely mixed} game with a unique Nash equilibrium.

The initial point is $\mathbf{z}^0 = 0 \in \mathbb{R}^{m \times 2d}$ and the matrices $\Theta_j$ are generated as follows: first, generate $L_j\in\mathbb{R}^{d \times d}$ from the uniform distribution $U(0,1)$; second, take $K_j= j L_j$ and $s_j=1.1\|K_j\|_2$; then, let $\Theta_j=s_j \Id - K_j$. By definition, $B_j$ is monotone and $\|\Theta_j\|_2$- Lipschitz continuous.

We now compare the performance of seven algorithms which are the counterparts of the algorithms considered in the previous experiments: the \emph{weighted sequential forward-reflected-backward (FRB) algorithm} \eqref{algo:seq_FRB}, the \emph{weighted parallel up forward-aggregated-Douglas--Rachford algorithm} \eqref{algo:par_up_Lipschitz}, the \emph{weighted parallel down forward-aggregated-Douglas--Rachford algorithm} \eqref{algo:par_down_Lipschitz}, the \emph{complete graph algorithm with reflected terms} \eqref{algo:complete}, the \emph{complete-star graph algorithm with reflected terms} \eqref{algo:complete_star}. For the last two, we use two choices of $P$, $Q$, and $R$: one given by \eqref{eq:PQR1}, and another with $P$ as in \eqref{eq:PQR1} and $Q$, $R$ as in \eqref{eq:PQR2}.

For this problem, we also set the weights of all graphs as unit and measure performance using the relative error \eqref{eq:relative error}, as in the previous experiment. For each algorithm, optimal parameters $\gamma$ and $\lambda_k$ are selected from the feasible range. Figure~\ref{fig:Lipschitz_comparison} shows the results for the first experiment with $p=20$ and $d=50$ and the second experiment with $p=30$ and $d=50$ after 10,000 iterations. 
\begin{figure}[htbp]
\captionsetup{skip=0pt}
    \centering
    \begin{subfigure}{0.48\textwidth}
        \includegraphics[width=\linewidth]{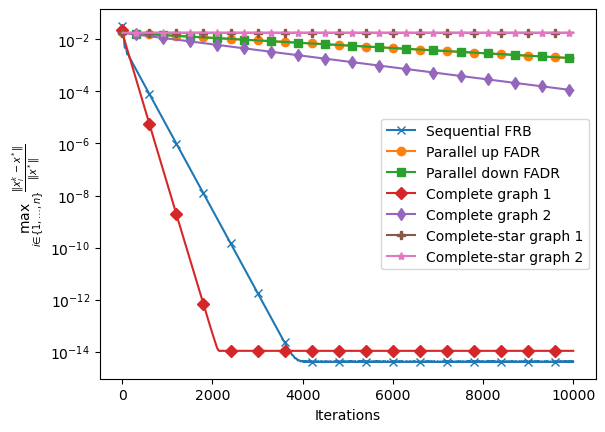}
    \end{subfigure}
    \begin{subfigure}{0.48\textwidth}
        \includegraphics[width=\linewidth]{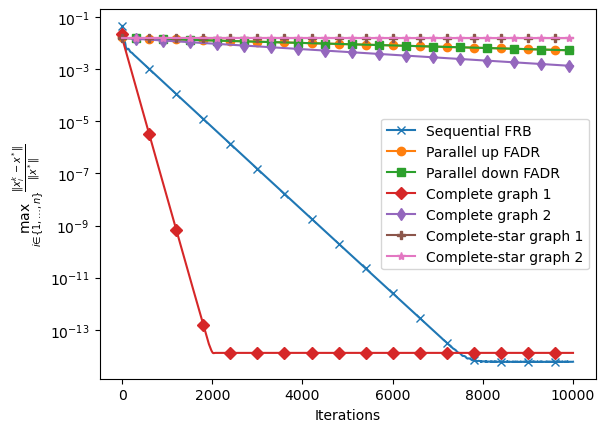}
    \end{subfigure}
    \caption{$p=20, d=50$ (left) and $p=30, d=50$ (right)}
    \label{fig:Lipschitz_comparison}
\end{figure}
The complete graph 1 algorithm converges the fastest for both $p=20$ and $p=30$. The sequential FRB algorithm also converges to high accuracy but requires more iterations when the problem size increases. In contrast, the parallel up/down FADR and complete graph 2 algorithms converge much more slowly, while the complete-star 1 and 2 algorithms show little improvement within 10,000 iterations. 

In summary, our experiments illustrate the framework's broad applicability and performance under diverse coefficient matrices and parameter settings. However, more comprehensive comparisons will be needed in future studies to draw general conclusions about algorithm performance.

\paragraph{Acknowledgements.} The research of MND, MKT and TDT was supported in part by Australian Research Council grant DP230101749. We thank the anonymous reviewers for their constructive comments, which helped improve the manuscript.

\end{document}